\theoremstyle{plain}
\newtheorem{theorem}{Theorem}[section]
\newtheorem{lemma}[theorem]{Lemma}
\newtheorem{corollary}[theorem]{Corollary}
\newtheorem{proposition}[theorem]{Proposition}
\newtheorem{assumption}[theorem]{Assumption}
\theoremstyle{definition}
\theoremstyle{remark}
\newtheorem{remark}{Remark}
\begin{document}

\title{On Target-Oriented Control of H\'{e}non and Lozi maps}

\author{
\name{E. Braverman\textsuperscript{a}\thanks{CONTACT E. Braverman. Email:
maelena@ucalgary.ca} and  A. Rodkina\textsuperscript{b}}
\affil{\textsuperscript{a}
Department of Mathematics and Statistics, University of Calgary,  
2500 University Drive N.W., Calgary, AB T2N 1N4, Canada;
\textsuperscript{b} 
Department of Mathematics,
The University of the West Indies, Mona Campus, Kingston, Jamaica}
}

\maketitle

\begin{abstract}
We explore stabilization for nonlinear systems of difference equations with modified Target-Oriented Control and a chosen equilibrium as a target, both in deterministic and stochastic settings. The influence of stochastic components in the control parameters is explored. 
The results are tested on the H\'{e}non and the Lozi maps.
\end{abstract}

\begin{keywords}
Target-Oriented Control, H\'{e}non map, Lozi map, system of stochastic difference equations
\end{keywords}

\begin{amscode}
39A11, 39A30, 39A50
\end{amscode}

\section{Introduction}
\label{introduct}

Unstable and chaotic behaviour of dynamical systems gave rise to 
the problem of their stabilization and synchronization actual in many areas of science and engineering.
The idea to apply control designed by the difference of the current and some other, for instance, approximated state, appeared in the seminal paper introducing Ott-Grebogi-Yorke (OGY) method \cite{OGY}. 
The review of chaos control methods both in continuous and discrete cases can be found in \cite{handbook}.
Application of delayed feedback control is going back to the pioneering paper of Pyragas \cite{Pyragas1992} which produced a simple and efficient method to stabilize unstable equilibrium points for a system of differential equations. The method of delayed control was further developed and improved, see the recent papers \cite{Gjur1,Gjur2,Gjur3}
and references therein.  For discrete dynamical systems, application of controls using the difference of the state variable with a computed value at the next step, or after several 
steps, appeared to be more efficient than incorporating delays. This idea, coherent with the OGY method,
is implemented in the Prediction Based Control (PBC) \cite{FL2010,uy99} where the controlled value of the map is a weighted average between the state variable and a computed value which, in the simplest case, evaluates the map at the next step. 
Though there are common ideas in the areas of differential and difference equations, there are some substantial differences in approaches and methods. Here we focus on discrete dynamical systems, requiring for stability of autonomous maps that all the eigenvalues are inside the unit circle (compared to negative real parts for systems of differential equations).

The purpose of the present paper is to investigate deterministic and stochastic chaos control 
for nonlinear system, as well as test and compare them using the examples of
the H\'{e}non and the Lozi maps.  Here we focus on Target-Oriented Control (TOC) with a fixed point chosen to stabilize as a target. TOC with a chosen and fixed target becomes a one-parameter method in the deterministic scalar case. 
In two-dimensional H\'{e}non and Lozi maps, with noise involved in the control, the number of parameters increases, where some are more important than the others, and their interplay is illustrated in examples and simulations.

The H\'{e}non \cite{Henon1976} and the Lozi \cite{Lozi} maps, their generalizations, limit orbits, dynamical properties and various applications continue to attract attention of researchers, see, for example, 
recent publications \cite{add1,add2,add3,add4,add5,add6,add7} and references therein. Unlike these papers, we do not deal with complex dynamics and structure of the attracting sets (though these sets are presented in simulations) but dedicate our efforts to stabilization, illustrating general statements with these two types of maps. The issue of stabilization is of special interest in population dynamics \cite{TPC,Liz2010,FL2010,LP2014}. Another focus is on the influence of stochastic perturbations on stabilization, which also is of significant importance in Mathematical Biology \cite{Chesson, Schreiber1} due to the fact that noise is ubiquitous  in ecological systems, and is sometimes considered to be an important factor in sustaining biodiversity. The idea of stabilization by noise originates in the work of Khasminskii \cite{Kh} and was employed  in many publications during following years. Among other works,  stabilization of difference equations was studied  in \cite{AMR, BR1, BR2019},
stabilization  for certain  higher-dimensional models in \cite{Medv}, and finite difference schemes for two-dimensional linear systems in \cite{BBKR,BK}. The approach of our paper  is closest to \cite{BKR, BKR2020, BR2017}, where prediction-based, proportional feedback and target oriented controls with stochastically perturbed parameters were studied for scalar difference equations.

One of our purposes in this paper is to establish the range of control parameters in TOC and noise amplitudes for which a system without noise is unstable, but addition of noise to the control component leads to stabilization. 

The H\'{e}non system
\begin{equation}
\label{henon}
\begin{array}{ll}
x_{n+1}= & y_n +1 -a x_n^2, 
\\
y_{n+1} = & b x_n,
\end{array}
\end{equation}
for $n \in {\mathbb N}_0 := \{ 0 \} \cup {\mathbb N}$, has two equilibrium points
\begin{equation*}
(x^*,y^*) = \left(  \frac{1}{2a} \left[ b-1 \pm \sqrt{4a + (b-1)^2}\right], bx^* \right), \quad a, b>0.
\end{equation*}
The Lozi map leads to a system of difference equations
\begin{equation}
\label{lozi}
\begin{array}{ll}
x_{n+1}= & y_n +1 -a |x_n|,
\\
y_{n+1} = & b x_n,
\end{array}
\end{equation}
for $n \in {\mathbb N}_0$, with two equilibrium points
\begin{equation*}
(x^*,y^*) = \left(  \frac{1}{1 \pm a-b}, b x^*  \right), \quad -a<1-b<a, ~~ a,b>0.
\end{equation*}

Following the tradition, we consider $a=1.4$, $b=0.3$ in most of examples, but also discuss the influence of other values for $a$ and $b$. 
It is well known that for the values $a=1.4$, $b=0.3$, both 
\eqref{henon} and \eqref{lozi} are chaotic. Looking at the bifurcation diagram on 
Fig.~\ref{figure_ex_1_fig_1} (a), with $\alpha=0$, we see that 
the limit set of the uncontrolled H\'{e}non map includes the segment $[-1.2,1.2]$, no stability is observed.
Similar chaotic properties of the Lozi map can be observed on Fig.~\ref{figure_ex_1_fig_2} (a), for $\alpha =0$.

\begin{figure}
\subfloat[]{%
\resizebox*{4cm}{!}{\includegraphics{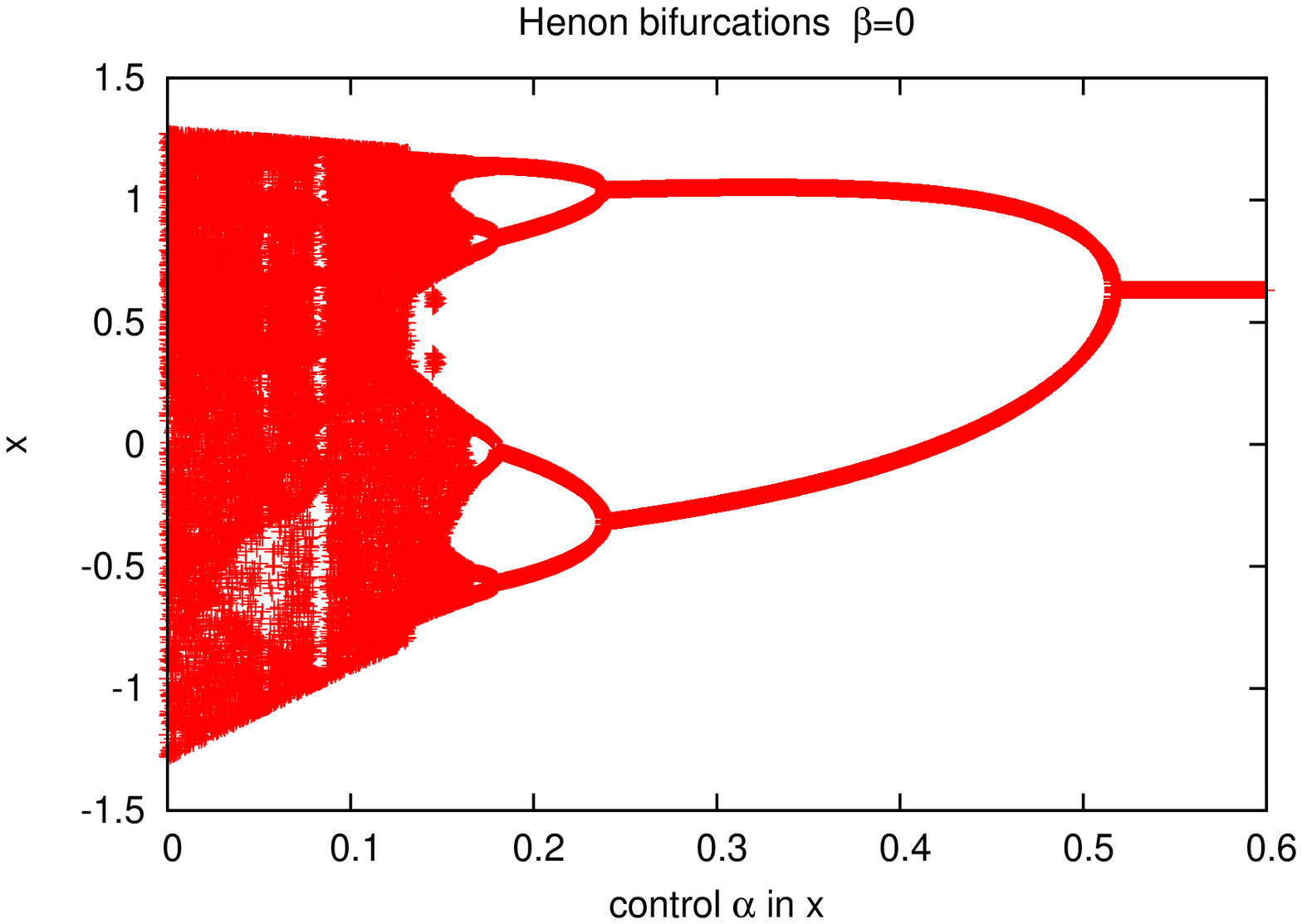}}}
\hspace{15pt}
~~~~~~~~~~~~~~~~~
\subfloat[]{%
\resizebox*{4cm}{!}{\includegraphics{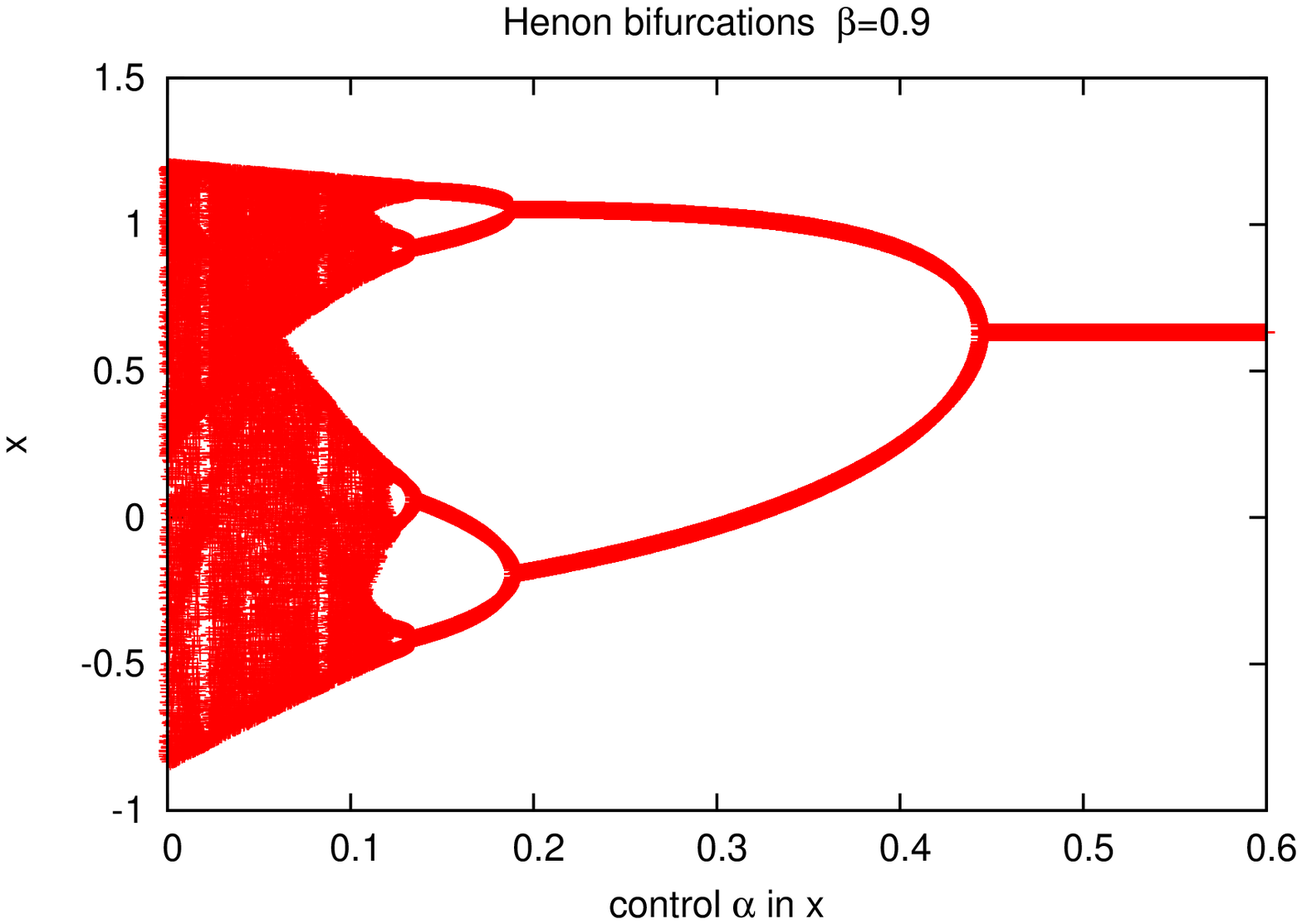}}}
\caption{Bifurcation diagram of the H\'{e}non map for $x$ and (a) no control in $y$, (b) 
$\beta =0.9$ control in $y$. The range of initial values is $x_0 \in [0.1,0.8]$ and $y_0 \in [0.1,0.2]$.
\bigskip
}
\label{figure_ex_1_fig_1}  
\end{figure}

\bigskip
\bigskip

\begin{figure}
\bigskip
\bigskip
\subfloat[]{%
\resizebox*{4cm}{!}{\includegraphics{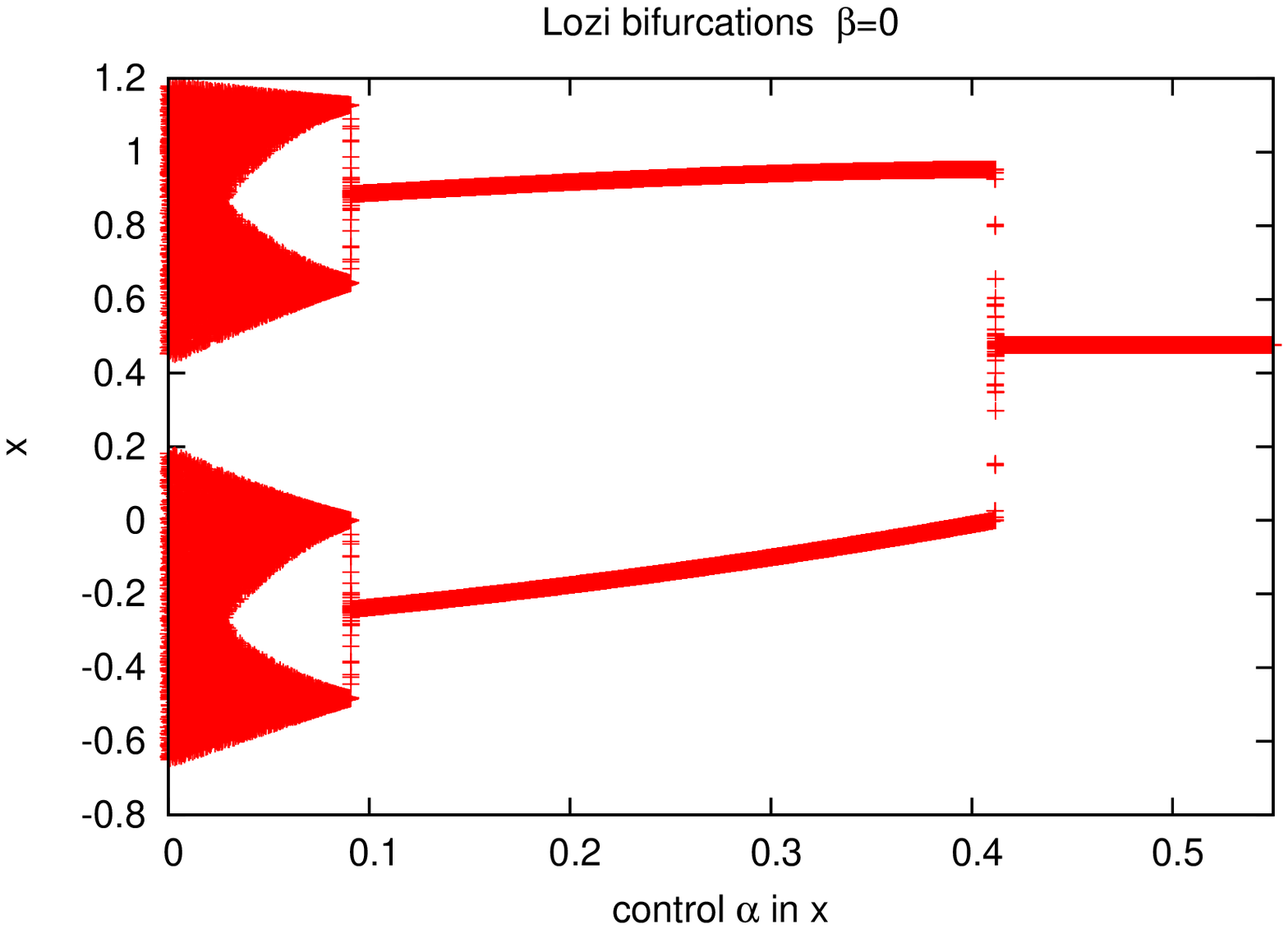}}}
\hspace{15pt}
~~~~~~~~~~~~~~~~~
\subfloat[]{%
\resizebox*{4cm}{!}{\includegraphics{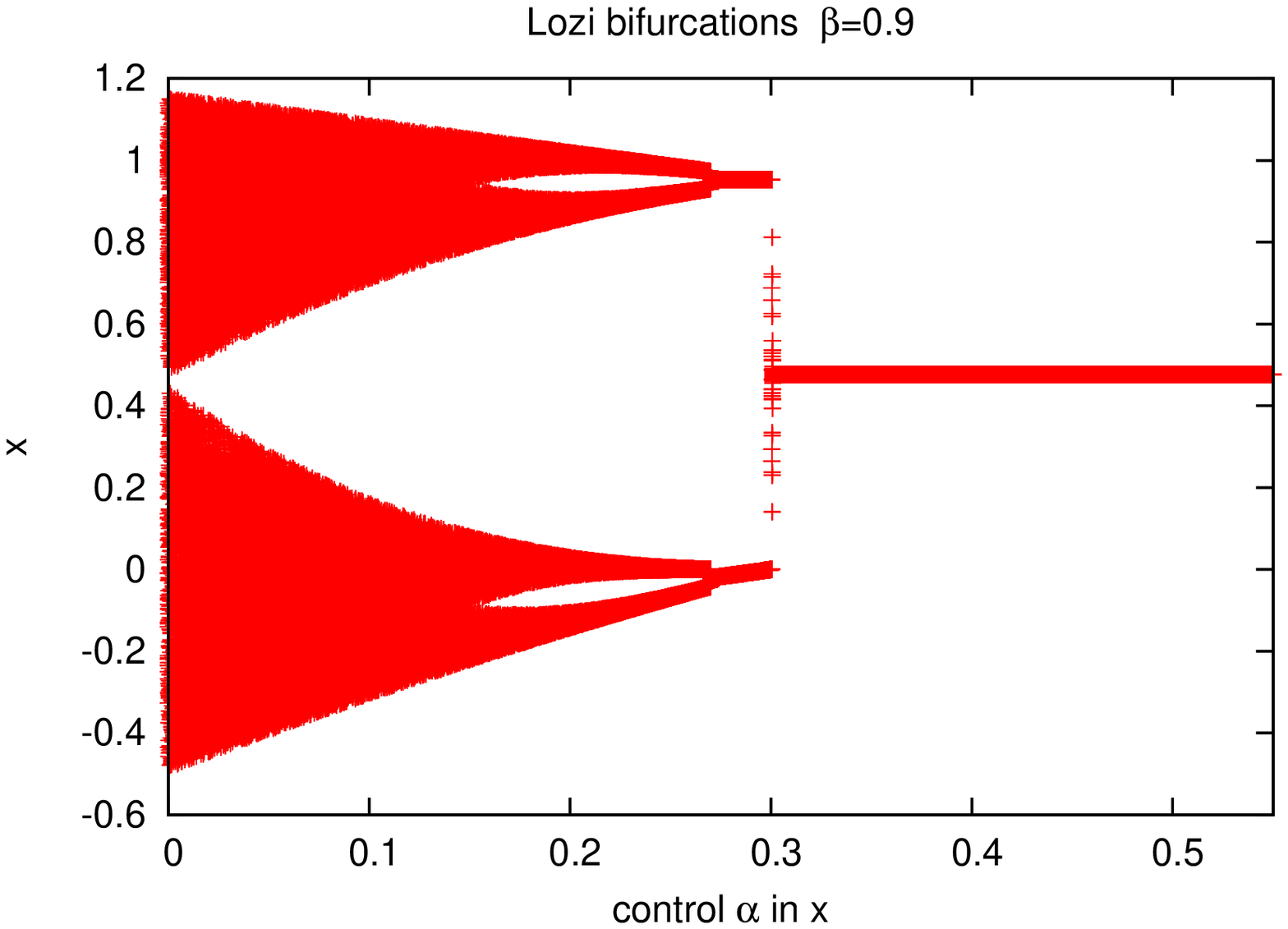}}}
\caption{Bifurcation diagram of the Lozi map for $x$ and (a) no control in $y$, 
(b) $\beta =0.9$ control in $y$. The range of initial values is $x_0 \in [0.1,0.8]$ and $y_0 \in [0.1,0.2]$.
\bigskip
}
\label{figure_ex_1_fig_2}  
\end{figure}

Both systems can be described as a nonlinear autonomous system
\begin{equation*}
X_{n+1} = F(X_n), \quad X_n \in {\mathbb R}^m,~~n \in {\mathbb N}_0.  
\end{equation*}

Scalar Target-Oriented Control (TOC) introduced first in \cite{Dattani} establishes a target state $T$ and implements an increase or a decrease of the state variable each time step, 
depending on  whether its value exceeds or is below the target state.
For a one-dimensional equation $x_{n+1}=f(x_n)$ with an arbitrary continuous function $f: {\mathbb R} \to {\mathbb R}$, TOC has the form
\begin{equation}
\label{TOC_eq}
x_{n+1}=f( c T+(1-c)x_n ), \quad T\ge 0, \; c\in [0,1).
\end{equation}
Here $c$ is the control intensity, for $c=0$ we get the original map, and, as $c\to 1$, the right-hand side in \eqref{TOC_eq} approaches $f(T)$. 
Once the target $T$ is chosen as a fixed point of $f$, obviously it is 
also a fixed point of \eqref{TOC_eq}, and TOC becomes an efficient method of its stabilization. If $T=0$ we get an earlier known Proportional Feedback (PF) control method \cite{gm} with a proportional reduction of the state variable at each step. PF control of scalar maps was studied in detail in \cite{Carmona,Liz2010}. Under certain assumptions, it stabilizes the zero equilibrium which is important in pest control problems and is aligned with TOC stabilization using a stabilized fixed point as a target. In other cases, a non-zero equilibrium can be stabilized.
For an arbitrary $T$, a fixed point of \eqref{TOC_eq} is a weighted average of a positive fixed point of $f$ \cite{TPC}, assuming it exists and is unique, and $T$, with the weight depending on the control intensity $c$.

In \cite{BF2017}, we considered vector extensions of TOC for systems  (VTOC)
\begin{equation*}
X_{n+1}= F(cT +(1-c)X_n), \quad 
c\in [0,1),
\end{equation*}
as well as Vector Modified Target Oriented Control (VMTOC)
\begin{equation}
\label{MTOC}
X_{n+1} = c T +(1-c) F(X_n), \quad 
c\in [0,1).
\end{equation}

In this paper we apply VMTOC to \eqref{henon} and \eqref{lozi} with $F=F_1$ and $F=F_2$, 
respectively, where 
$$
F_1((x,y)') = (y+1-ax^2,bx)', \quad F_2((x,y)') = (y+1-a|x|,bx)',
$$
we use $ \cdot '$ for transposition.

In addition to VMTOC \cite{BF2017} with a constant control intensity \eqref{MTOC},
we describe some important modifications.
\begin{itemize}
\item
We consider variable control intensity. 
For TOC, a point being stabilized is a weighted average of the target and a fixed point of the original map.
Thus, to stabilize a chosen equilibrium, we have to choose this fixed point as a target.
\item
Using the results of the previous item, a stochastic component of the control can be introduced and studied, with the noise amplitude $\ell$. Moreover, with $\ell>0$ stabilization can be achieved in the cases, when for $\ell =0$, a two-cycle is stable rather than the target equilibrium. 
\item
In \eqref{MTOC}, 
the control is implemented using left multiplication by a scalar matrix $cI$. We consider more general matrices. The choice of diagonal matrices gives a good trade-off of simplicity and differential approach to the variables. 
For both H\'{e}non and Lozi maps, $x$ is more `sensitive' to control than $y$, as expected. 
\end{itemize}

Chaos control of nonlinear applied systems is a more challenging problem than for scalar maps. 
The present paper considers VMTOC \cite{Chan2014,BF2015,BF2017,Dattani,TPC} with a chosen equilibrium point for the H\'{e}non and the Lozi maps.
The theory applies to any of the two points, however, in examples and simulations we reduce ourselves to stabilization of $\displaystyle (x^*,y^*)=\left(  \frac{1}{2a} \left[ b-1 + \sqrt{4a + (b-1)^2}\right], bx^* \right)$ for the H\'{e}non and $\displaystyle (x^*,y^*)=\left(  \frac{1}{1 + a-b}, b x^*  \right)$ for the Lozi map, respectively, using it as a target.

Constant, variable and stochastic control types are investigated. For constant control intensity, estimation of convergence with an initial point in a neighborhood of the target is considered. 
First, the influence of the control intensity for $x$ is much more significant than contribution of control in $y$.  Second, while theoretically control level should increase with the increase of the domain for initial values, in simulations we see that the control value providing 
local stabilization will work for a larger domain in a certain neighbourhood of the chosen equilibrium point.
Next, variable control is important in itself, describing a possibility of changing
intensity, as well as for further application to a stochastically perturbed control.  
Introduction of stochastic control is two-fold: 
\begin{enumerate}
\item
to demonstrate the range of noise which keeps stability for the same interval of parameters and the initial domain, where a deterministic system with variable deterministic control stabilizes a chosen equilibrium; 
\item
to improve deterministic results in a sense that there is stability in a stochastic case, while no stability in the deterministic case with the mean control values is observed (compare with \cite{BR2019,Medv}).
\end{enumerate}

For the first part, results for variable coefficients provide a required background, the same method was applied in \cite{BKR}.
For the second part, sometimes a neighborhood of the equilibrium point in which stabilization can be achieved is quite small \cite{BR2019}. However, here we get robust results (illustrated with simulations and bifurcation diagrams) for the H\'{e}non map and a global type estimate for the Lozi map with more restrictions on the control parameters than in the right half-plane.

The rest of the paper has the following structure.
In Section~\ref{sec_main} we consider stabilization of \eqref{henon} and \eqref{lozi} with a modified deterministic version of TOC. In Section~\ref{sec:stoch}, a control is assumed to be perturbed by some noise. In both sections, theoretical results are illustrated with examples.
A brief discussion of the results and comparison to other control methods in  Section~\ref{sec:conclusions} conclude the paper.


\section{Stabilization of Equilibrium Points by VMTOC}
\label{sec_main}

\subsection{General deterministic step-dependent control}

We generalize VMTOC \eqref{MTOC} in ${\mathbb R}^m$ to a system with a matrix control
\begin{equation}
\label{eq:VMTOC}
X_{n+1}= U T+(I-U) F(X_n), \quad X_0 \in B_R(X^*), \quad  n\in {\mathbb N}_0,
\end{equation}
where $X_0\in {\mathbb R}^m$, $U$ is a constant $m \times m$ matrix, $I$ is the identity matrix, 
a certain norm $\| \cdot \|$ is assumed in $\mathbb R^m$, the same notation will be used for an induced matrix norm,
$$
B_R(X^*) := \left\{ X\in {\mathbb R}^m: \| X-X^* \| < R \right\}
$$
is an open ball.
For stabilization of a chosen equilibrium $X^*$, we use it as a target $T=X^*$ and take $U$ as a diagonal matrix such that $U=$diag$(d_1, \dots, d_m)$.

In addition to constant diagonal $U$ in \eqref{eq:VMTOC}, we consider step-dependent 
VMTOC with an infinite family of diagonal matrices $\{ U_n \}$ with the entries $d_{1,n},\dots, d_{m,n} \in [0,1)$ on the diagonal
\begin{equation}
\label{eq:VMTOC_var}
X_{n+1} = U_{n+1} X^* +(I-U_{n+1}) F(X_n), \quad X_0 \in B_R(X^*), \quad n\in {\mathbb N}_0.
\end{equation}
Then, VMTOC with different matrices at various steps becomes
\begin{equation*}
\label{eq:VMTOC_var_a}
X_{n+1}= F(X_n) - U_{n+1}(F(X_n)-X^*), ~ X_0 \in B_R(X^*), ~ n\in {\mathbb N}_0,
\end{equation*} 
with $F(X^*)= X^*$, leading to
$$
X_{n+1} - X^* = (I - U_{n+1})(F(X_n) - X^*).
$$

\begin{theorem}
\label{toc_lemma_var}
Assume that there exist $R>0$, the diagonal matrices $U_n$ and $\nu^* = \nu^*(R) \in (0,1)$ such that 
\begin{equation}
\label{eq:diff_cond_var}
\| (I-U_{n}) [F(X) - X^* ]\| \leq  \nu^* \|X - X^*\|, \quad   
n\in {\mathbb N}, \quad  X \in B_R(X^*).
\end{equation}

Then any solution $X_n$ of \eqref{eq:VMTOC_var} with $X_0 \in B_R(X^*)$ converges to $X^*$, and $X^*$ is asymptotically stable in $B_R(X^*)$. 
\end{theorem}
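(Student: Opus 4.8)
The plan is to turn hypothesis \eqref{eq:diff_cond_var} into a step-dependent contraction on the shifted sequence $X_n-X^*$, using the identity $X_{n+1}-X^*=(I-U_{n+1})(F(X_n)-X^*)$ recorded just before the theorem. The only subtlety is that \eqref{eq:diff_cond_var} is assumed to hold only for $X\in B_R(X^*)$, so before iterating the bound one must first check that the entire trajectory stays inside this ball.

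First I would prove, by induction on $n$, that $X_n\in B_R(X^*)$ for every $n\in\mathbb{N}_0$. The base case $X_0\in B_R(X^*)$ is part of the hypothesis. For the inductive step, assuming $X_n\in B_R(X^*)$, apply \eqref{eq:diff_cond_var} with $X=X_n$ together with the displayed identity to get $\|X_{n+1}-X^*\|=\|(I-U_{n+1})(F(X_n)-X^*)\|\le\nu^*\|X_n-X^*\|<\nu^*R<R$, so $X_{n+1}\in B_R(X^*)$; this closes the induction and shows $B_R(X^*)$ is forward invariant under \eqref{eq:VMTOC_var}.

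Second, with the trajectory confined to $B_R(X^*)$, the same inequality gives $\|X_{n+1}-X^*\|\le\nu^*\|X_n-X^*\|$ for all $n\in\mathbb{N}_0$, hence $\|X_n-X^*\|\le(\nu^*)^n\|X_0-X^*\|$ by a trivial induction. Since $\nu^*\in(0,1)$, the right-hand side tends to $0$, so $X_n\to X^*$. For asymptotic stability in $B_R(X^*)$: given $\varepsilon>0$, put $\delta:=\min\{\varepsilon,R\}$; then any solution with $\|X_0-X^*\|<\delta$ satisfies $\|X_n-X^*\|\le(\nu^*)^n\|X_0-X^*\|<\delta\le\varepsilon$ for all $n$ (Lyapunov stability) and $X_n\to X^*$ (attraction), which is exactly asymptotic stability on $B_R(X^*)$.

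I do not expect a genuine obstacle: the argument is a step-dependent contraction estimate whose only delicate point is the invariance of $B_R(X^*)$, which is dispatched in the first step. The uniformity of $\nu^*$ in $n$ (it depends only on $R$) is what makes the geometric decay, and hence the conclusion, work.
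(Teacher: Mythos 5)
Your proof is correct and follows essentially the same route as the paper: the identity $X_{n+1}-X^*=(I-U_{n+1})(F(X_n)-X^*)$ plus hypothesis \eqref{eq:diff_cond_var} yields, by induction, both forward invariance of $B_R(X^*)$ and the geometric estimate $\|X_n-X^*\|\le(\nu^*)^n\|X_0-X^*\|$, from which convergence and asymptotic stability follow. Your explicit $\varepsilon$--$\delta$ check of Lyapunov stability is slightly more detailed than the paper's, which simply infers stability from the uniform geometric decay rate, but the substance is identical.
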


\begin{proof}
If $X_0 \in B_R(X^*)$, we have $\| X_0-X^* \| < R$ 
and
\begin{equation*}
\begin{split}
\| X_1 - X^* \| = & \| U_1 X^* +(I-U_1) F(X_0) - X^* \| \\ =  & \|  (I-U_1) [F(X_0) - X^* ] \| \leq  \nu^* \|X_0 - X^*\| < R.
\end{split}
\end{equation*}
Using induction, assume that $X_n \in B_R(X^*)$ and 
$ \| X_n - X^* \|  \leq  (\nu^*)^n \|X_0 - X^*\|$. Then,
by \eqref{eq:diff_cond_var},
\begin{equation*}
\begin{split}
\| X_{n+1} - X^* \|  & = \| U_{n+1} X^* +(I-U_{n+1}) F(X_n) - X^* \| 
\\ & = \| (I-U_{n+1}) [F(X_n) - X^* ] \| \leq   \nu^* \| X_n-X^* \|  \leq   (\nu^*)^{n+1} \| X_0-X^* \| , 
\end{split}
\end{equation*}  
which implies that all $X_n \in B_R(X^*)$ and $\| X_n - X^* \|   \leq   (\nu^*)^n \| X_0 - X^* \| \to 0,$ as $n \to \infty$, for any $X_0 \in B_R(X^*)$. So we get convergence to $X^*$, with a guaranteed convergence rate, thus $X^*$ is asymptotically stable in $B_R(X^*)$.
\end{proof}

Further, Theorem~\ref{toc_lemma_var} allows to stabilize any locally Lipschitz map in a chosen domain. 

\begin{corollary}
\label{cor:toc_determ_var}
Suppose that there are constants $R>0$ and $L>0$ such that
\begin{equation}
\label{eq:Lipschitz_gen}
\| F(X)-X^* \| \leq L \| X- X^* \|, \quad  X \in B_R(X^*).
\end{equation}
Then, for each $\nu^* \in (0,1)$, there exists a diagonal matrix $U=${\rm diag}$(d_1,\dots, d_m)$ with the entries $d_1,\dots, d_m\in [0,1)$ such that \eqref{eq:diff_cond_var} holds, and $X^*$ is an asymptotically stable point of \eqref{eq:VMTOC_var} in $B_R(X^*)$.
\end{corollary}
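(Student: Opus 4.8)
The plan is to derive the contraction estimate \eqref{eq:diff_cond_var} directly from the Lipschitz-type bound \eqref{eq:Lipschitz_gen} by absorbing the Lipschitz constant $L$ into a suitably small choice of the diagonal entries, and then invoke Theorem~\ref{toc_lemma_var} verbatim. Concretely, since $U=\mathrm{diag}(d_1,\dots,d_m)$ is constant, we have $I-U=\mathrm{diag}(1-d_1,\dots,1-d_m)$, and the induced matrix norm of a diagonal matrix is controlled by the entry of largest absolute value: setting $d:=\max_i d_i\in[0,1)$, one gets $\|(I-U)Z\|\le \mu(d)\,\|Z\|$ for all $Z\in{\mathbb R}^m$, where $\mu(d)$ depends on the chosen norm but satisfies $\mu(d)\to 0$ as $d\to 1^-$ and $\mu(d)\le 1-d$ or $\mu(d)\le \max_i(1-d_i)$ in the standard cases (for the $\ell^1,\ell^2,\ell^\infty$ norms the induced norm of a nonnegative diagonal matrix equals its largest diagonal entry, so $\mu(d)=1-d$ when all $d_i=d$).

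First I would fix an arbitrary $\nu^*\in(0,1)$ and choose the common value $d_1=\dots=d_m=d$ with $d$ close enough to $1$ that $\mu(d)L\le\nu^*$; since $\mu(d)\to 0$ as $d\to 1^-$ and $L$ is a fixed finite constant, such a $d\in[0,1)$ exists. Then for every $X\in B_R(X^*)$,
\begin{equation*}
\|(I-U)[F(X)-X^*]\|\le \mu(d)\,\|F(X)-X^*\|\le \mu(d)L\,\|X-X^*\|\le \nu^*\,\|X-X^*\|,
\end{equation*}
which is exactly \eqref{eq:diff_cond_var} with the step-independent family $U_n\equiv U$. Applying Theorem~\ref{toc_lemma_var} with this choice of $R$, $U_n\equiv U$ and $\nu^*$ then yields convergence of every solution of \eqref{eq:VMTOC_var} with $X_0\in B_R(X^*)$ to $X^*$ and asymptotic stability of $X^*$ in $B_R(X^*)$, completing the proof.

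The only delicate point is the bound $\|(I-U)Z\|\le\mu(d)\|Z\|$ with $\mu(d)\to 0$, which is norm-dependent; for the norms used in the paper this is the elementary fact that the induced norm of a nonnegative diagonal matrix is its spectral radius, hence its largest diagonal entry, so I would either invoke this directly or simply state the argument for a generic monotone norm and note $\mu(d)=\|I-U\|\le 1$ always, with $\mu(d)=1-d$ in the cases of interest. Everything else is routine: choosing $d$ large enough to beat the fixed constant $L$, and quoting Theorem~\ref{toc_lemma_var}. I do not anticipate a genuine obstacle here — the corollary is essentially a packaging of the theorem for locally Lipschitz maps, and the main work was already done in establishing the contraction-mapping-style induction in Theorem~\ref{toc_lemma_var}.
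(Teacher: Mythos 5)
Your proposal is correct and takes essentially the same approach as the paper: you pick a scalar matrix $U=dI$ with $d$ close enough to $1$ that $\|I-U\|L\le\nu^*$ (the paper just makes this explicit with $d=1-\nu^*/\max\{1,L\|I\|\}$), verify \eqref{eq:diff_cond_var} by submultiplicativity together with \eqref{eq:Lipschitz_gen}, and then quote Theorem~\ref{toc_lemma_var}. The only difference is cosmetic: the paper additionally observes that any diagonal matrices $U_n$ whose entries lie at or above this threshold also satisfy \eqref{eq:diff_cond_var}, which goes slightly beyond the literal statement you proved.
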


\begin{proof}
For $\displaystyle U= \left( 1 - \frac{\nu^*}{\max\{1,L \| I \|\}}  \right) I$, we get
\begin{equation*}
\begin{split}
\| (I-U) [F(X) - X^* ]\| \leq & \| I-U \| \| F(X) - X^* \| \\ \leq &  \frac{\nu^*}{\max\{1,L \| I \| \}} L  \| X- X^* \| 
 \leq \nu^*
\| X- X^* \|.
\end{split}
\end{equation*}
For $U_n=$diag$(d_{1,n}, \dots, d_{m,n})$ with $\displaystyle 1 - \frac{\nu^*}{\max\{1,L \| I \|\}} \leq d_{j,n} < 1$, we also get 
$$
\| (I-U_{n}) [F(X) - X^* ]\| \leq  \| I-U_{n} \| \| F(X) - X^* \| \leq \nu^* \| X- X^* \|,
$$
so \eqref{eq:diff_cond_var} holds, and reference to Theorem~\ref{toc_lemma_var} concludes the proof.
\end{proof}

Sharper control estimates with $U$ different from a scalar matrix determined in the proof of Corollary~\ref{cor:toc_determ_var}, are possible for specific $F$ and chosen norms.
A particular case of $F$ and $\|\cdot\|_{\infty}$ norm,  $ \displaystyle \| (x_1, \dots, x_m)\|_{\infty} = \max_{k=1, \dots, m} |x_k|$, is outlined below. 
\begin{corollary}
\label{th:toc_determ_var}
Let $F=(f_1,\dots,f_m)'$, $X=(x_1, \dots, x_m)'$, $T=X^*=(x_1^*, \dots, x_m^*)'$ be an equilibrium point, $R>0$ and positive constants $A_{ij}$, $i,j = 1, \dots, m$, ${\mathcal A} = (A_{ij})_{i,j=1}^m$ be such that 
\begin{equation}
\label{eq:Lipschitz}
| f_i(X)-x_i^* | \leq \sum_{j=1}^m A_{ij} \left| x_j - x_j^* \right|, \quad  X \in B_R(X^*).
\end{equation}
Then, for each $\nu^* \in (0,1)$ there exists $U=$diag$(d_1, \dots, d_m)$ with $d_1,\dots, d_m\in [0,1)$  such that in the induced norm $\| \cdot \|_{\infty}$, $\| (I-U) {\mathcal A} \| \leq  \nu^*$.

Also, for any diagonal matrices $U_n=${\rm diag}$(d_{1,n}, \dots, d_{m,n})$ with $d_j \leq d_{j,n} <1$, $j=1, \dots, m$, 
$X^*$ is an asymptotically stable point of \eqref{eq:VMTOC_var} in $B_R(X^*)$.
\end{corollary}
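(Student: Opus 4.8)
The plan is to reduce both assertions to the explicit formula for the operator norm induced by $\|\cdot\|_{\infty}$, namely that for any $m\times m$ matrix $M=(M_{ij})$ one has $\|M\|_{\infty}=\max_{1\leq i\leq m}\sum_{j=1}^{m}|M_{ij}|$, the maximal absolute row sum. First I would note that, since $U=\mathrm{diag}(d_1,\dots,d_m)$ with $d_i\in[0,1)$, the matrix $I-U$ is diagonal with nonnegative entries $1-d_i$, so the $(i,j)$ entry of $(I-U)\mathcal A$ equals $(1-d_i)A_{ij}$ and hence
$$
\| (I-U)\mathcal A \|_{\infty}=\max_{1\leq i\leq m}(1-d_i)\sum_{j=1}^{m}A_{ij}=\max_{1\leq i\leq m}(1-d_i)S_i ,
$$
where $S_i:=\sum_{j=1}^{m}A_{ij}>0$ is the $i$-th row sum of $\mathcal A$.

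Second, given $\nu^*\in(0,1)$, I would choose $d_i:=\max\{0,\,1-\nu^*/S_i\}$ for each $i$. Then $d_i\in[0,1)$, because $\nu^*/S_i>0$; and in both cases one checks directly that $(1-d_i)S_i\leq\nu^*$ (equality when $S_i>\nu^*$, while $(1-d_i)S_i=S_i\leq\nu^*$ when $S_i\leq\nu^*$). Combined with the row-sum formula above, this yields $\|(I-U)\mathcal A\|_{\infty}\leq\nu^*$, which is the first claim.

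Third, for the step-dependent version let $U_n=\mathrm{diag}(d_{1,n},\dots,d_{m,n})$ with $d_j\leq d_{j,n}<1$. Monotonicity gives $0\leq 1-d_{i,n}\leq 1-d_i$, so $(1-d_{i,n})S_i\leq(1-d_i)S_i\leq\nu^*$ for every $i$ and every $n$. To verify the hypothesis \eqref{eq:diff_cond_var} of Theorem~\ref{toc_lemma_var} with $\|\cdot\|_{\infty}$, I would estimate, for $X\in B_R(X^*)$, using \eqref{eq:Lipschitz} and $|x_j-x_j^*|\leq\|X-X^*\|_{\infty}$,
$$
(1-d_{i,n})\,| f_i(X)-x_i^* |\leq(1-d_{i,n})\sum_{j=1}^{m}A_{ij}\,|x_j-x_j^*|\leq(1-d_{i,n})S_i\,\|X-X^*\|_{\infty}\leq\nu^*\,\|X-X^*\|_{\infty},
$$
and then take the maximum over $i$ to get $\|(I-U_{n})[F(X)-X^*]\|_{\infty}\leq\nu^*\|X-X^*\|_{\infty}$. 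Thus \eqref{eq:diff_cond_var} holds with this norm and this $\nu^*\in(0,1)$, and Theorem~\ref{toc_lemma_var} gives asymptotic stability of $X^*$ in $B_R(X^*)$.

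The argument is essentially computational. The only point requiring a little care is the truncation $\max\{0,\cdot\}$ in the definition of $d_i$, needed to keep the control intensities in $[0,1)$ when some row sum $S_i$ happens to be smaller than $\nu^*$ (i.e.\ when no control is needed in the $i$-th coordinate), together with the monotonicity observation that enlarging $d_{j,n}$ past the threshold $d_j$ only improves the contraction estimate. I do not anticipate any genuine obstacle.
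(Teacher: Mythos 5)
Your proposal is correct and follows essentially the same route as the paper's proof: the same choice $d_i=\max\{0,1-\nu^*/L_i\}$ with $L_i$ the row sums, the same row-wise estimate $(1-d_{i,n})L_i\leq(1-d_i)L_i\leq\nu^*$ giving condition \eqref{eq:diff_cond_var} in the $\|\cdot\|_\infty$ norm, and the same conclusion via Theorem~\ref{toc_lemma_var}. The only difference is that you spell out the maximal-row-sum formula for the induced norm explicitly, which the paper leaves implicit.
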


\begin{proof}
Let \eqref{eq:Lipschitz} hold. Denote 
\begin{equation}
\label{eq:Lip_const}
L_i: = \sum_{j=1}^m A_{ij}, \quad L := \max_{i=1, \dots, m} L_i.
\end{equation}
Let us fix any $\nu^* \in (0,1)$ and define $d_i=\max\left\{1- \frac{\nu^*}{L_i}, 0\right\}$.
Then, $(1-d_i)L_i\leq \nu^*$, for each $i=1, \dots, m$, which implies $\| (I-U) {\mathcal A} \| \leq  \nu^*$. 
 
Consider now any diagonal matrix $U_n=$diag$(d_{1,n}, \dots, d_{m,n})$ with $d_j \leq d_{j,n} <1$, $j=1, \dots, m$.
Then the $i$-th entry of $ (I-U_n) [F(X) - X^* ]$ satisfies
\begin{equation*}
\begin{split}
 & |(1-d_{i,n})(f_i(X)-X^*) |  \leq (1-d_{i,n}) \sum_{j=1}^m A_{ij} 
\left| x_{j} - x_j^* \right|  \\
\leq &  (1-d_i)L_i \max_{j=1, \dots, m}  \left| x_{j} - x_{j}^* \right|  \leq \nu^* \max_{j=1, \dots, m}  \left| x_{j} - x_{j}^* \right|.
\end{split}
\end{equation*}
Thus \eqref{eq:diff_cond_var} holds, and the reference to Theorem~\ref{toc_lemma_var} concludes the proof.
\end{proof}

\begin{remark}
\label{rem:linearization}
Assume that all $f_i$ are continuously differentiable. Then in \eqref{eq:Lipschitz}, by the Mean Value Theorem, for some $\bar{X} \in B_R(X^*)$,
$$
| f_i(X)-x_i^* | = \left| \nabla f(\bar{X}) (X-X^*) \right| \leq 
\sum_{j=1}^m \max_{ (x_1, \dots, x_m) \in B_R(X^*)} \left| \frac{\partial f_i}{\partial x_j} \right| \left| x_j - x_j^* \right|.
$$
Thus, as $R$ in \eqref{eq:Lipschitz} becomes small,  for a smooth map $F$, 
the values of $A_{ij}$ in \eqref{eq:Lipschitz} approach $\displaystyle \left| \frac{\partial f_i}{\partial x_j} (X^*) \right|$.
\end{remark}

In Sections~\ref{sec: step_independent}-\ref{subsec:varHL}, we  consider examples of the H\'{e}non map,  $a=1.4$, $b=0.3$, with the equilibrium $(x^*,y^*) \approx (0.6314,0.1894)$ and of the Lozi map,  $a=1.4$, $b=0.3$, with  $(x^*,y^*) \approx (0.4762,0.1429)$.

\subsection{Step-independent  H\'{e}non and Lozi maps}
\label{sec: step_independent}

First, we illustrate that stabilization with constant VMTOC and the diagonal matrix $U={\rm diag}(\alpha,\beta)$ is possible. The computation is supported with numerical simulations.
Everywhere in simulations, we choose stabilization of the equilibrium in the right half-plane with $|x|=x$, and the initial values satisfying $x_0>0$.   

\subsubsection{\bf H\'{e}non}

Consider constant VMTOC  of  the H\'{e}non map
\begin{equation}
\label{henon_toc}
\begin{array}{ll}
x_{n+1}= & \alpha x^* + (1-\alpha)(y_n +1 -a x_n^2),
\\
y_{n+1} = & \beta y^* + (1-\beta) b x_n,
\end{array}
\end{equation}
where the matrix of the controlled autonomous system is 
$$
(I-U){\mathcal A} = \left[  \begin{array}{cc}  -2(1-\alpha)a x^* & 1- \alpha \\
(1-\beta)b & 0    \end{array} \right]
$$
with two constant values $\beta =0$ and $\beta = 0.9$, to outline the role of $\beta$.  For $\beta=0$,
all the eigenvalues of $(I-U)\mathcal A$ are in the unit circle if and only if \cite[P. 188]{Saber} the trace and the determinant satisfy
\begin{equation}
\label{stab_cond}
| {\rm tr} ((I-U){\mathcal A}) | < 1 + {\rm det} ((I-U){\mathcal A}) < 2.
\end{equation}
Note that,  if both eigenvalues of $(1-U){\mathcal A}$ are in the unit circle, there exists a norm $\|\cdot\|$ such that 
 $\|(1-U){\mathcal A}\|<1$. 

Since ${\rm det} ((I-U){\mathcal A})<0$, the right inequality in \eqref{stab_cond} is valid, and the left one is equivalent to 
\begin{equation*}
\begin{split}
2(1-\alpha)a x^* < &  1 - (1-\alpha) b \iff 1-\alpha < \frac{1}{2ax^*+b} \\ \approx & \frac{1}{2.8 \cdot 0.631354+0.3} = 1-\alpha^* \approx 0.48361.
\end{split}
\end{equation*}
Thus, for $\alpha > \alpha^* \approx 0.51639$, the equilibrium $(x^*,y^*)\approx (0.6314,0.1894)$ of \eqref{henon_toc} is locally asymptotically stable.
In Fig.~\ref{figure_ex_1_fig_1} (a), we can observe in the bifurcation diagram that the last period-halving bifurcation is quite close 
to the theoretically predicted value for local stability and is $\alpha^* \approx 0.5164$.

Further, assume that some $\alpha> \alpha^* \approx 0.51639$ is set and notice that
$\displaystyle \frac{1}{2a(1-\alpha)} - x^*>0$ due to the choice of $\alpha> \alpha^*$. Then, for any 
$\displaystyle R \in \left( 0, \frac{1}{2a(1-\alpha)} - x^* \right)$ and $(x_0,y_0) \in B_R((x^*,y^*))$, the solution tends to the equilibrium in the first quadrant.

For the bifurcation diagram in Fig.~\ref{figure_ex_1_fig_1} (a), we took quite a large $R > 0.5$.

For $\beta=0.9$, all the eigenvalues of $\displaystyle (I-U){\mathcal A} = \left[  \begin{array}{cc}  -2(1-\alpha)a x^* & 1- \alpha \\
0.1 b & 0    \end{array} \right]$ satisfy 
$|\lambda| <1$ for $1-\alpha < 1/( 2ax^*+ 0.1 b)  \approx 0.55624$. 
Therefore for $\beta=0.9$ and $\alpha > \alpha^* \approx 0.44376$, the positive equilibrium $(x^*,y^*)$ of \eqref{henon_toc} is locally asymptotically stable. 
In Fig.~\ref{figure_ex_1_fig_1} (b), we see that for $\beta =0.9$, the last period-halving bifurcation with stability for larger values is at 
$\alpha \approx 0.444$, which is aligned with predicted local stability.


\subsubsection{\bf Lozi}

Consider constant VMTOC  of  the  Lozi map
\begin{equation}
\label{lozi_toc}
\begin{array}{ll}
x_{n+1}= & \alpha x^* + (1-\alpha)(y_n +1 -a |x_n|), 
\\
y_{n+1} = & \beta y^* + (1-\beta) b x_n,
\end{array}
\end{equation}
with $x_0,x_n>0$ and $(I-U){\mathcal A}$ being the Jacobian matrix of controlled system \eqref{lozi_toc}
$$
(I-U){\mathcal A} = \left[  \begin{array}{rr}  -(1-\alpha)a  & 1- \alpha \\
(1-\beta)b & 0    \end{array} \right].
$$

For $\beta=0$, conditions \eqref{stab_cond} are equivalent to
$$
(1-\alpha) a < 1 - (1-\alpha) b \iff 1-\alpha < \frac{1}{a+b} \approx 0.588235,
$$
and for $\alpha > \alpha^* \approx 0.411765$, the positive equilibrium of \eqref{lozi_toc} is locally asymptotically stable. 
In Fig.~\ref{figure_ex_1_fig_2} (a), we see that the last period-halving bifurcation is at $\alpha \approx 0.412$, as predicted.

Similarly, once $\beta=0.9$, the inequality $1-\alpha < \frac{1}{a+0.1b} \approx 0.6993$ holds for
 $\alpha > \alpha^* \approx 0.3007$ which is illustrated with Fig.~\ref{figure_ex_1_fig_2} (b), where stability starts at $\alpha \approx 0.301$.

\begin{remark}
\label{rem:absLozi}
Assume now that $x^*>0$ and we do not restrict ourselves to the right half-plane.  
For $x_n<0$ in ${\bf l}_{\infty}$-norm, for example, we have 
\begin{equation*}
\begin{split}
\|X_{n+1}-X^*\| & =\left\|\left[  \begin{array}{l}  (1-\alpha)a(|x_n| - x^*)  + (1- \alpha)(y_n-y^*)  \\
(1-\beta)b(x_n-x^*)     \end{array} \right] \right\|\\
&\leq \max\{(1-\alpha)a|x_n-x^*| +(1-\alpha)|y_n-y^*|,(1-\beta)b|x_n-x^*|\} 
\\
& \le \max \{(1-\alpha)(1+a), (1-\beta)b \} \|X_n-X^*\|.
\end{split}
\end{equation*}
Therefore for $b=0.3 \in (0,1)$, $a=1.4$,  we have $(1-\beta)b<0.3$, so  $\nu^*=(1-\alpha)(1+a)<1$ if $\alpha>1-\frac 1{1+a}=0.5833$, which is a larger lower estimate than above.  So stability in all the plane is guaranteed if $\alpha>0.5833$. We later illustrate this fact with simulations for controls with noise.
\end{remark}


\subsection{Variable H\'{e}non and Lozi maps}
\label{subsec:varHL}

Further, we proceed to the variable VMTOC and specify constants in the conditions of Corollary~\ref{th:toc_determ_var} for the H\'{e}non and the Lozi maps. Let 
$$
U_n = \left[ \begin{array}{cc} d_{1,n} & 0 \\ 0 & d_{2,n} \end{array} \right], ~~ d_{1,n},d_{2,n} \in [0,1), ~~n \in {\mathbb N}.
$$
This leads to VMTOC of the H\'{e}non map 
\begin{equation}
\label{henon_toc_diag}
\begin{array}{ll}
x_{n+1}= &  d_{1,n+1} x^* + \left(1-d_{1,n+1} \right)
(y_n +1 -a x_n^2),
\\
y_{n+1} = & d_{2,n+1}  y^* + \left(  1-d_{2,n+1} \right)b x_n,
\end{array}
\end{equation}
and the Lozi map
\begin{equation}
\label{lozi_toc_diag}
\begin{array}{ll}
x_{n+1}= & d_{1,n+1}  x^* + \left(1-d_{1,n+1} \right)(y_n +1 -a |x_n|), 
\\
y_{n+1} = &  d_{2,n+1}  y^* + \left(  1-d_{2,n+1}  \right) b x_n.
\end{array}
\end{equation}

For the sake of brevity in the following two  sections we denote 
$\alpha_n=d_{1,n+1}$, $\beta_n=d_{2,n+1}$. 


\subsubsection{\bf H\'{e}non}
\label{subsubsec:var_Henon}

We have
\begin{equation*}
\begin{split}
\left[ \begin{array}{c} 1-ax^2+y-x^* \\ bx-y^* \end{array} \right] & = \left[ \begin{array}{c} (1-ax^2+y)- (1-a(x^*)^2+y^*) \\ b(x-x^*) \end{array} \right] \\ &  =
\left[ \begin{array}{cc} -a(x+ x^*)  & 1 \\ b & 0 \end{array} \right] \left[ \begin{array}{c} x - x^* \\ y-y^* \end{array} \right] =: {\bar A} (X-X^*),
\end{split}
\end{equation*}
where $x \in (x^*-R, x^*+R)$ and $R>0$ is a fixed constant. 
Thus  \eqref{eq:Lipschitz}-\eqref{eq:Lip_const} hold with
\begin{equation}
\label{def:cal_A_coef}
A_{11} =  \max_{x \in [x^*-R, x^*+R]} a|x+ x^*|,~~ A_{12} = 1, ~~ A_{21}= b,~~ A_{22}=0, ~~
{\mathcal A}=(A_{ij})_{i,j=1}^2,
\end{equation}
$L_1 = \max_{x \in [x^*-R, x^*+R]} a|x+ x^*| + 1$ and $L_2=b$. Note that $L_2<1$, for $b=0.3$. Let us follow the computations in the proof of Corollary~\ref{th:toc_determ_var} to find possible diagonal entries of $U_n$ guaranteeing that the norms of the matrices
\begin{equation*}
\begin{split}
(I-U_n){\mathcal A} = & \left[ \begin{array}{cc} \displaystyle (1-\alpha_n) \max_{x \in [x^*-R, x^*+R]} a|x+ x^*| & 1-\alpha_n
 \vspace{2mm} \\ (1-\beta_n) b & 0 \end{array} \right]  \\ = &
\left[ \begin{array}{cc} \displaystyle (1-\alpha_n)a(2x^*+R) & 1-\alpha_n \\ (1-\beta_n) b & 0 \end{array} \right]
\end{split}
\end{equation*}
do not exceed $\nu^*<1$. 
Let us illustrate the dependency of the lower bound on $\alpha_n$ on the choice of the matrix norm.
In ${\mathbf l}_{\infty}$-norm, the numbers $\beta_n \in [0,1)$ can be arbitrary as $b =0.3 \in (0,1)$.  
Consider, for example, a particular case of no control in $y$ (all $\beta_n=0$) and distinguish between the two cases:
\begin{enumerate}
\item
small $R \approx 0.01$. Then $0< x+ x^* < 1.273$. Let us fix $\displaystyle \alpha_* > 1- \frac{1}{a(2x^*+R)+1} \approx 0.641$. For $\alpha_n \in (\alpha_*,1)$ and $x_0 \in (x^*-0.01,x^*+0.01)$, local stabilization is achieved.  
\item
larger $R \approx 0.36$. Then $0< x+ x^* < 1.63$. We fix $\alpha_* \in (0.694,1)$
and get stabilization for all $\alpha_n \in [\alpha_*,1)$.
\end{enumerate} 
Recall that in Fig.~\ref{figure_ex_1_fig_1} (a), we observe stabilization for no control in $y$ and any $\alpha \approx 0.516$ or higher. 
To illustrate the role of $y$-control and sensitivity of the estimates to the choice of a norm, we note that, while in ${\mathbf l}_{\infty}$-norm, the sum of the moduli of the entries in each row of
$(I-U_n){\mathcal A}$ should be less than $\nu^*<1$, $\nu^* \approx 1$, in ${\mathbf l}_1$-norm $\| (x_1, \dots, x_m)' \|_1$,
the required condition is
$$
(1-\alpha_n) a(2|x^*|+R) + (1-\beta_n) b<\nu^*, \quad 1-\alpha_n < \nu^*.
$$ 
The second inequality holds for any $\alpha_n \in [\alpha_*,1)$, while the first one for a small $R \approx 0.01$
and $\beta_n =0$ becomes $\alpha_n \geq \alpha^* > 1 - 0.7/(2ax^*) \approx 0.6041$,
which is a better estimate than in ${\mathbf l}_{\infty}$-norm, while for $\beta_n=0.9$, we get stabilization for $\alpha_n \in (0.4513,1)$. Finally, for ${\mathbf l}_{2}$ vector norm, the induced norm is a spectral norm, with $\| B \|$ for a matrix $B$ being the largest value  $\sqrt{\lambda}$, where $\lambda$ is an eigenvalue of $B'B$.    
For a fixed $\nu^*<1$, $\nu^* \to 1^-$, both eigenvalues $\lambda_i$ of 
\begin{equation*}
\begin{split}
M_n = & ((I-U_n){\mathcal A})'((I-U_n){\mathcal A}) \\ = & 
\left[ \begin{array}{cc} (1-\alpha_n)^2a^2(2x^*+R)^2 + (1-\beta_n)^2 b^2 & (1-\alpha_n)^2 a (2x^*+R) \\ (1-\alpha_n)^2 a (2x^*+R) & (1-\alpha_n)^2 \end{array} \right]
\end{split}
\end{equation*}
should satisfy (the eigenvalues are real and non-negative) $\lambda_i \leq (\nu^*)^2<1$.

It is sufficient to check for the larger eigenvalue $\displaystyle {\rm tr}(M_n) + \sqrt{ {\rm tr}^2(M_n) - 4 {\rm det~}(M_n)} < 2  (\nu^*)^2$, which, under ${\rm tr}(M_n) \leq 2 (\nu^*)^2 < 2$, is equivalent to 
$ {\rm tr}(M_n) \leq (\nu^*)^2 + {\rm det~}M_n/(\nu^*)^2,$
or 
\begin{equation}
\label{est:iv}
(1-\alpha_n)^2a^2(2x^*+R)^2 + (1-\beta_n)^2 b^2 + (1-\alpha_n)^2 \leq (\nu^*)^2 + (1-\beta_n)^2 b^2 (1-\alpha_n)^2(\nu^*)^{-2} < 2.
\end{equation}
Since $\nu^*<1$, for $\nu^* \to 1^-$, we have $(1-\beta_n)^2 b^2 (1-\alpha_n)^2(\nu^*)^{-2}<1$, then  \eqref{est:iv} is valid if
$$
 (1-\alpha_n)^2 \left[ a^2(2x^*+R)^2 + 1 - b^2 (1-\beta_n)^2   \right] +
b^2 (1-\beta_n)^2 \leq  \mu^*<1
$$
for some $\mu^* \in (0,1)$,
which gives the estimate, in the absence of $y$-control ($\beta_n \equiv 0$), that the control intensity required is  
$\alpha_n \geq \alpha_* >0.53$ for small $R$ and $\alpha_* >0.613$ for $R=0.36$. Let $\beta_n \equiv 0.9$,  this  
slightly improves to smaller $\alpha_* \approx 0.51$ and $\alpha_* \approx 0.6$, respectively.

The constants defined in \eqref{def:cal_A_coef} imply \eqref{eq:Lipschitz} for \eqref{henon_toc_diag} with $(x^*,y^*) \approx (0.6314,0.1894)$ and also apply for $a \in (0,1.4)$. Similar estimates can be developed for the other equilibrium, as well as for arbitrary $a,b>0$.

\begin{proposition}
\label{prop_henon}
Let $(x^*,y^*)$ be an equilibrium of \eqref{henon}, and $R$ 
satisfy $R<|x^*|$. Then there exist constants $\alpha_*=\alpha_*(R)$ and $\beta_*=\beta_*(R)$ such that
for any $\alpha_n \in (\alpha_*,1)$, $\beta_n \in (\beta_*,1)$, a solution of \eqref{henon_toc_diag} converges to $(x^*,y^*)$ for any initial value $(x_0,y_0)\in  B_R(X^*)$. 
\end{proposition}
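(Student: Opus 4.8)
The plan is to deduce the proposition directly from the general machinery already in place, namely Corollary~\ref{th:toc_determ_var} and Theorem~\ref{toc_lemma_var}, by exhibiting explicit Lipschitz-type constants for the H\'enon nonlinearity on the ball $B_R(X^*)$. First I would record the algebraic identity already used in Section~\ref{subsubsec:var_Henon},
$$
(1-ax^2+y) - (1-a(x^*)^2+y^*) = -a(x+x^*)(x-x^*) + (y-y^*),
$$
which, after taking absolute values, gives $|f_1(X)-x_1^*|\le a|x+x^*|\,|x-x^*|+|y-y^*|$ and $|f_2(X)-x_2^*| = b|x-x^*|$. On $B_R(X^*)$ with $R<|x^*|$ the coordinate $x$ keeps the sign of $x^*$, so $|x+x^*|\le 2|x^*|+R$; hence \eqref{eq:Lipschitz} holds for $F=F_1$ with $A_{11}=a(2|x^*|+R)$, $A_{12}=1$, $A_{21}=b$, $A_{22}=0$. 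This is precisely where the hypothesis $R<|x^*|$ enters: it makes the supremum of $a|x+x^*|$ over $[x^*-R,x^*+R]$ transparent and, via the contraction estimate below, keeps the whole trajectory in the half-plane containing $x^*$.

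Next, set $L_1 := a(2|x^*|+R)+1$ and $L_2 := b$ as in \eqref{eq:Lip_const}, fix an arbitrary $\nu^*\in(0,1)$, and put $\alpha_* := \max\{1-\nu^*/L_1,0\}=1-\nu^*/L_1$ (the first term is selected since $L_1>1>\nu^*$) and $\beta_* := \max\{1-\nu^*/L_2,0\}$. Both lie in $[0,1)$, and since $\nu^*>0$ we have $\alpha_*,\beta_*<1$ strictly, so the prescribed ranges $(\alpha_*,1)$ and $(\beta_*,1)$ are nonempty. Then, for any diagonal $U_n={\rm diag}(\alpha_n,\beta_n)$ with $\alpha_*\le\alpha_n<1$ and $\beta_*\le\beta_n<1$ — in particular for every $\alpha_n\in(\alpha_*,1)$, $\beta_n\in(\beta_*,1)$ — Corollary~\ref{th:toc_determ_var}, applied with $m=2$, $F=F_1$ and the constants above, gives $\|(I-U_n){\mathcal A}\|_{\infty}\le\nu^*$, hence \eqref{eq:diff_cond_var} with this $\nu^*$.

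Theorem~\ref{toc_lemma_var} then applies to equation \eqref{henon_toc_diag}, which is \eqref{eq:VMTOC_var} with $F=F_1$: every solution with $(x_0,y_0)\in B_R(X^*)$ stays in $B_R(X^*)$, converges to $(x^*,y^*)$ at the geometric rate $(\nu^*)^n$, and $(x^*,y^*)$ is asymptotically stable there; since $R<|x^*|$, the trajectory also remains in the half-plane $\{\,x:\operatorname{sign}(x)=\operatorname{sign}(x^*)\,\}$. This is the assertion. The argument is essentially bookkeeping, so there is no deep obstacle; the only point requiring care is the uniform bound on the nonlinear factor $a|x+x^*|$ over the whole ball, which is what forces the restriction $R<|x^*|$, and which for the other equilibrium or a larger $R$ would merely change the constant $A_{11}$, and hence the threshold $\alpha_*$, without altering the structure of the proof.
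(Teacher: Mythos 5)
Your proof is correct and reaches the conclusion through the same basic mechanism as the paper --- verify the contraction condition \eqref{eq:diff_cond_var} and invoke Theorem~\ref{toc_lemma_var} --- but via a different intermediate reduction. The paper's own proof works in an arbitrary (unspecified) norm, uses only the crude bound \eqref{eq:Lipschitz_gen} with a single Lipschitz constant $L(R)$ together with submultiplicativity, $\|(I-U_n)[F(X)-X^*]\|\le \|I-U_n\|\,\|F(X)-X^*\|$, and so sets the common threshold $\alpha_*=\beta_*=1-\nu^*/\max\{1,L\|I\|\}$; in effect it repeats the argument of Corollary~\ref{cor:toc_determ_var}. You instead fix the $\ell_\infty$ norm and apply the componentwise Corollary~\ref{th:toc_determ_var} with the explicit constants $A_{11}=a(2|x^*|+R)$, $A_{12}=1$, $A_{21}=b$, $A_{22}=0$ (the computation of Section~\ref{subsubsec:var_Henon}), which yields $\alpha_*=1-\nu^*/\bigl(a(2|x^*|+R)+1\bigr)$ and $\beta_*=\max\{1-\nu^*/b,0\}$. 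What your route buys is explicit and sharper thresholds --- in particular $\beta_*$ may be taken as $0$ when $b<1$, rather than being forced to equal $\alpha_*$ --- at the price of committing to the $\ell_\infty$ ball; the paper's version is norm-agnostic but leaves $L(R)$ implicit and gives cruder bounds. One small remark: for the H\'enon map the hypothesis $R<|x^*|$ is not actually what makes your estimate work, since $|x+x^*|\le 2|x^*|+R$ follows from the triangle inequality for every $R>0$ and the map contains no $|x|$; the sign-preservation observation is harmless but inessential here (it matters for the Lozi analogue), so your attribution of the role of $R<|x^*|$ is a slight overstatement rather than a gap.
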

\begin{proof}
For an arbitrary norm and $R<|x^*|$, map \eqref{henon} satisfies 
\eqref{eq:Lipschitz} for $x \in B_R(X^*)$ with the Lipschitz constant $L(R)$.
The constant $L$ is also norm-dependent but the norms are equivalent in ${\mathbb R}^2$, we fix the norm, the radius $R$ and the prescribed convergence rate $\nu^* \in (0,1)$. Denote $\displaystyle \alpha_* = 1 - \frac{\nu^*}{\max\{1,L \| I \| \}} \in (0,1)$, $\beta_* =\alpha_*$.
Then for any $\alpha_n \in (\alpha_*,1)$, $\beta_n \in (\beta_*,1)$, a solution of \eqref{henon_toc_diag} with $(x_0,y_0)\in  B_R(X^*)$ satisfies
$$
\| (I-U_{n}) [F(X) - X^* ]\| \leq  \| I-U_{n} \| \| F(X) - X^* \| \leq \nu^* \| X- X^* \|,
$$
so \eqref{eq:diff_cond_var} holds, and application of Theorem~\ref{toc_lemma_var} concludes the proof.
\end{proof}


\subsubsection{\bf Lozi}
\label{subsubsec:var_Lozi}
 
Computation  similar to Section \ref{subsubsec:var_Henon} leads to
\begin{equation*}
\begin{split}
\left[ \begin{array}{c} 1-a|x|+y-x^* \\ bx-y^* \end{array} \right] & = \left[ \begin{array}{c} (1-ax+y)- (1-ax^*+y^*) \\ b(x-x^*) \end{array} \right] \\ & = 
 \left[ \begin{array}{cc} - a  & 1 \\ b & 0 \end{array} \right] \left[ \begin{array}{c} x - x^* \\ y-y^* \end{array} \right] =: {\bar A} (X-X^*),
\end{split}
\end{equation*}
where $x \in (x^*-R, x^*+R)$, $R<x^*\approx 0.4762$.   Thus  \eqref{eq:Lipschitz} holds for the Lozi map with
\begin{equation}
\label{def:cal_B}
A_{11} =  a,~~ A_{12} = 1, ~~ A_{21} = b,~~ A_{22}=0,~~L_1 = a + 1, ~~L_2=b.
\end{equation}  
The norm of $\displaystyle 
(I-U_n){\mathcal A} = \left[ \begin{array}{cc} a (1-\alpha_n) & 1-\alpha_n \\ 
(1-\beta_n) b & 0 \end{array} \right]
$
should not exceed $\nu^*<1$. 
Further, let us study the dependency of the lower bound on $\alpha_n$ on the choice of the matrix norm.
In ${\mathbf l}_{\infty}$-norm, $\beta_n \in [0,1)$ 
can be arbitrary as $b =0.3 \in (0,1)$.  
For $\alpha_n$, to achieve convergence, $(1-\alpha_n)(a+1)\leq \nu^* <1$ should be satisfied, or for $\alpha_n \geq  \alpha_* > 1 - 1/2.4 \approx 0.584$  stabilization is achieved.

For ${\mathbf l}_{1}$-norm, we should have $(1-\alpha_n) a+ (1-\beta_n) b<1$, the condition 
$1-\alpha_n <1$ is true for any non-zero control level. Thus, for $\beta_n=0$, we get 
$\alpha_n  > 1 - 0.7/1.4=0.5$, so the equilibrium is stable for any $\alpha_n>0.5$. If $\beta_n=0.9$, $\alpha_n > 1-0.97/1.4 \approx 0.31$.

As for the spectral norm, the matrix 
$$
M_n = ((I-U_n){\mathcal   A})'((I-U_n){\mathcal A}) =
\left[ \begin{array}{cc} (1-\alpha_n)^2a^2 + (1-\beta_n)^2 b^2 & (1-\alpha_n)^2 a  \\ (1-\alpha_n)^2 a & (1-\alpha_n)^2 \end{array} \right]
$$
has the trace $(1-\alpha_n)^2a^2 + (1-\beta_n)^2 b^2+ (1-\alpha_n)^2$ and the determinant 
$(1-\beta_n)^2 b^2 (1-\alpha_n)^2$. 
The spectral norm does not exceed $\nu^*<1$ if the larger of two real nonnegative eigenvalues of the quadratic equation $\lambda^2 - (\rm{tr}~M_n) \lambda + \rm{det}~M_n=0 $
is less than $(\nu^*)^2<1$.  Thus $\displaystyle \rm{tr}~M_n + \sqrt{\rm{tr}^2~M_n - 4 \rm{det}~M_n} 
\leq 2 (\nu^*)^2$, which after some simplifications is equivalent to $\rm{tr}~M_n < \rm{det}~M_n/(\nu^*)^2 + (\nu^*)^2$, or
\begin{equation}
\label{auxiliary}
(1-\alpha_n)^2a^2 + (1-\beta_n)^2 b^2+(1-\alpha_n)^2< \frac{(1-\beta_n)^2 b^2 (1-\alpha_n)^2}{(\nu^*)^2} + (\nu^*)^2.
\end{equation}
If $\displaystyle (1-\alpha_n)^2 < \frac{1-(1-\beta_n)^2 b^2}{a^2+1-(1-\beta_n)^2 b^2}$, or \eqref{auxiliary} holds with $\nu^*=1$, from continuity of the right-hand side of \eqref{auxiliary} in $\nu^*$ and strictness of the inequality, there is $\nu^* <1$ such that \eqref{auxiliary} holds, and the spectral norm is not greater than $\nu^* <1$.

For $\beta_n \equiv 0$, we get $\alpha_n \geq \alpha_* >0.44$, while $\beta_n \equiv 0.9$ leads to $\alpha_n\geq \alpha_* >0.42$.

\medskip

The constants defined in \eqref{def:cal_B} imply \eqref{eq:Lipschitz} for \eqref{lozi_toc_diag}, and they are exactly the same in the case 	$x^*<0$ and $x < 0$. 
The proof of the following result is similar to the proof of  
Proposition~\ref{prop_henon}.


\begin{proposition}
\label{prop_lozi}
Let $(x^*,y^*)$ be a fixed equilibrium of \eqref{lozi}, $R<|x^*|$. Then there exist constants $\alpha_*=\alpha_*(R)$ and $\beta_*=\beta_*(R)$ such that
for any $\alpha_n \in (\alpha_*,1)$, $\beta_n \in (\beta_*,1)$, a solution of \eqref{lozi_toc_diag} converges to $(x^*,y^*)$ for any initial value $(x_0,y_0)\in  B_R(X^*)$. 
\end{proposition}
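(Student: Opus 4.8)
The plan is to mirror exactly the argument given for the H\'enon map in Proposition~\ref{prop_henon}, replacing the quadratic term by the absolute value. First I would fix an equilibrium $(x^*,y^*)$ of \eqref{lozi} and take $R<|x^*|$, so that on $B_R(X^*)$ the sign of $x$ agrees with the sign of $x^*$; this is precisely the condition that lets us drop the absolute value and write $|x|-|x^*| = \mathrm{sgn}(x^*)(x-x^*)$. With this, the computation displayed just before \eqref{def:cal_B} shows that $F_2(X)-X^* = \bar A (X-X^*)$ with $\bar A$ the constant matrix $\left[\begin{smallmatrix} -a & 1 \\ b & 0\end{smallmatrix}\right]$ (up to the sign of $a$, which is irrelevant for the norm bound), so \eqref{eq:Lipschitz} holds with the constants in \eqref{def:cal_B}, hence also \eqref{eq:Lipschitz_gen} holds with some Lipschitz constant $L=L(R)$ for any fixed norm.

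Next, fixing a norm $\|\cdot\|$ on ${\mathbb R}^2$, a radius $R<|x^*|$, and a target convergence rate $\nu^*\in(0,1)$, I would set $\alpha_*=\beta_*=1-\dfrac{\nu^*}{\max\{1,L\|I\|\}}\in(0,1)$, exactly as in the proof of Corollary~\ref{cor:toc_determ_var}. Then for any diagonal matrices $U_n=\mathrm{diag}(\alpha_{n-1},\beta_{n-1})$ with $\alpha_n\in(\alpha_*,1)$, $\beta_n\in(\beta_*,1)$, we have $\|I-U_n\|\le \dfrac{\nu^*}{\max\{1,L\|I\|\}}$, so for $X\in B_R(X^*)$,
\[
\| (I-U_n)[F_2(X)-X^*]\| \le \|I-U_n\|\,\|F_2(X)-X^*\| \le \frac{\nu^*}{\max\{1,L\|I\|\}}\,L\,\|X-X^*\| \le \nu^*\|X-X^*\|,
\]
which is condition \eqref{eq:diff_cond_var}. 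Applying Theorem~\ref{toc_lemma_var} then gives that every solution of \eqref{lozi_toc_diag} with $(x_0,y_0)\in B_R(X^*)$ converges to $(x^*,y^*)$ (indeed with the explicit rate $(\nu^*)^n$), completing the proof.

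There is essentially no serious obstacle: the only point requiring care is the reduction of $|x|$ to a linear term, and that is handled entirely by the hypothesis $R<|x^*|$, which keeps the orbit (by the contraction just established inductively) on one side of the origin; note that the bound above is stated for all $X\in B_R(X^*)$, so it does not need to know the orbit stays there — that is delivered by the induction inside Theorem~\ref{toc_lemma_var}. One could also give sharper, norm-specific values of $\alpha_*,\beta_*$ using the constants \eqref{def:cal_B} in the $\mathbf{l}_\infty$, $\mathbf{l}_1$ or spectral norm as was done in the preceding discussion, but for the qualitative statement the generic Lipschitz estimate suffices, so I would keep the proof short by simply referring to Proposition~\ref{prop_henon} and indicating that \eqref{def:cal_B} replaces \eqref{def:cal_A_coef}.
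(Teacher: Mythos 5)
Your proposal is correct and is essentially the paper's own argument: the authors likewise observe that the constants in \eqref{def:cal_B} give \eqref{eq:Lipschitz} on $B_R(X^*)$ with $R<|x^*|$ (the absolute value being linearized by the sign condition, with identical constants when $x^*<0$), and then repeat the proof of Proposition~\ref{prop_henon} verbatim, choosing $\alpha_*=\beta_*=1-\nu^*/\max\{1,L\|I\|\}$, verifying \eqref{eq:diff_cond_var}, and invoking Theorem~\ref{toc_lemma_var}. No substantive difference from the paper's route.
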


\begin{remark}
\label{rem:nongom}
Note that the matrix $\mathcal A$ defined in \eqref{def:cal_A_coef}, depends on the radius $R<|x^*|$ in the domain $B_R((x^*,y^*))$, due to $x$ being involved, therefore the bounds for the control parameters guaranteeing  stability can become close to one with the growth of the radius $R$. In contrast to the H\'{e}non case, the matrix $\mathcal A$ in \eqref{def:cal_B} 
for any radius $R<|x^*|$ does not include $x$ explicitly and thus is uniform, see Remark~\ref{rem:absLozi}.
\end{remark}


\section{Stabilization with stochastically perturbed  control}
\label{sec:stoch}

\subsection{General stochastically perturbed  control}
\label{subsec:genstoch}

Stochastic control is a well-developed area, especially for parametric optimization, optimal stochastic control, dynamic programming {\it etc}, see \cite{Astrom} and references therein.
Our approach is based on application of  the  Kolmogorov's Law of Large Numbers, see Lemma \ref{lem:Kolm} below, and is closest to the methods  developed and applied in \cite{BKR2020,BR2017,Medv}  (see also \cite{BR1,BR2019}). Note that we prove only local stability with any a priori fixed probability from $(0,1)$. However, in some cases, computer simulations demonstrate  that for chosen parameters,  stability actually happens  in a relatively wide area of initial parameters, which gives us a hope to extend our local results. However, this extension is left for future research.

We consider a complete filtered probability space $(\Omega, {\mathcal{F}}$, $\{{\mathcal{F}}_n\}_{n \in 
\mathbb N}, {\mathbb P})$, where the filtration $(\mathcal{F}_n)_{n \in \mathbb{N}}$ is naturally generated by 
$m$ sequences of mutually independent identically distributed random variables $\chi_i:=(\chi_{i,n})_{n\in\mathbb{N}}$, $i=1, \dots,m$, i.e. 
$\mathcal{F}_{n} = \sigma \left\{\chi_{i,j},\, j=1, 2, \dots, n, \, i=1, 2, \dots, m \right\}$. The standard abbreviation ``a.s.'' is used for either ``almost sure" or ``almost surely" with respect to a fixed probability measure $\mathbb P$.
For a detailed introduction of stochastic concepts and notations we refer the reader to \cite{Shiryaev96}. 

In this paper  we reduce our investigation only to the case when control parameters in VMTOC model~\eqref{eq:VMTOC_var} are perturbed by a bounded noise, since in many real-world models, in particular, in population dynamics, noise amplitudes are bounded. 
\begin{assumption}
\label{as:noise}
$\chi_i:=(\chi_{i,n})_{n\in\mathbb{N}}$, $i=1, \dots,m$ are $m$ sequences of mutually independent identically distributed random variables such that $|\chi_{i,n}|\le 1$, $i=1, \dots, m$, $n\in\mathbb{N}$.
\end{assumption}
Note that, in general, random variables $\chi_i$  can have different distributions.

We study VMTOC \eqref{eq:VMTOC_var},  with an infinite family of  diagonal $m \times m$ matrices $\{ U_n \}$, \\ $U_n=$diag$(d_{1,n}, \dots, d_{m,n})$, where controls $d_{i,n}$ are stochastically perturbed
\begin{equation}
\label{def:d}
d_{i,n}=\alpha_i+\ell_i\chi_{i,n}, \quad \alpha_i\in (0, 1), \quad \ell_i\in [0, \min\{\alpha_i, 1-\alpha_i\} ), \quad i=1, \dots, m.
\end{equation}
Here $\chi_{i,n}$ are random variables satisfying Assumption \ref{as:noise}, 
 $\ell_i $  are the intensities of noises.
Note that \eqref{def:d} guarantees that $d_{i,n}=\alpha_i+\ell_i\chi_{i,n}\in (0, 1)$, $i=1, \dots, m$, $n\in\mathbb{N}$.

Instead of condition \eqref{eq:diff_cond_var} we assume now that  
\begin{equation}
\label{eq:diff_cond_stoch}
\| (I-U_n) [F(X) - X^* ]\| \leq \nu(n) \|X - X^*\|,\quad n\in \mathbb N , \quad X\in  B_R(X^*),
\end{equation}
where
\begin{equation}
\label{def:phi}
\nu(n)=\phi(d_{1,n}, \dots, d_{m,n}), \quad \phi:\mathbb R^m\to [0, \infty) \quad \mbox{is a continuous function}.
\end{equation}
Since  the function $\phi$ from \eqref{def:phi} is continuous, random variables $\chi_{i,n}$, $i=1, \dots, m$, $n\in \mathbb N$, are mutually independent, and $\chi_{i,n}$ are identically distributed (for each $i$),  random variables $\nu(n)$ are also mutually independent and identically  distributed. 

For each $\omega\in \Omega$, the stochastic controls $\alpha_i + \ell_i \chi_{i, n+1}(\omega)$, $i=1, \dots, m$, $n\in \mathbb N_0$, are just numbers in the corresponding intervals $[\alpha_i - \ell_i, \alpha_i + \ell_i]$. Therefore, if we can choose $\alpha_i$ and $\ell_i$ in such a way that $\nu(n)\le \nu^*<1$ on $\Omega$, for some $\nu^*\in (0, 1)$, we are in the same situation as in Section \ref{sec_main}, see Propositions \ref{prop_henon_stoch} and \ref{prop_lozi_stoch} in Section \ref{subsec:stochHL}. The idea to some extent generalizes the approach in \cite{BKR} to systems. 

In this section we concentrate on the case when $\nu(n)$ might  exceed one on some set with a nonzero probability, assuming instead  that
\begin{equation}
\label{cond:mainstoch}
\mathbb E \ln  \nu(n)=-\lambda_1<0
\end{equation}
and applying the  Kolmogorov's Law of Large Numbers, see \cite[Page 391]{Shiryaev96}. 

\begin{lemma}
\label{lem:Kolm}[Kolmogorov's Law of Large Numbers]
Let $(v_{n})_{n\in\ \mathbb N}$ be a sequence of independent identically distributed random variables, where $\mathbb E |v_n|<\infty$, $n\in\mathbb{N}$, their common mean is $\mu:=\mathbb E v_n$, and the partial sum is $\displaystyle S_n:= \sum_{k=1}^n v_k$.	Then $
\displaystyle \lim_{n\to\infty} \frac{S_n}{n} = \mu$, a.s.
\end{lemma}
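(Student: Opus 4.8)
The plan is to prove the strong law by Etemadi's truncation argument, which uses only the hypothesis $\mathbb E|v_n|<\infty$ and no higher moments. First I would reduce to nonnegative summands: writing $v_n=v_n^+-v_n^-$, the sequences $(v_n^+)$ and $(v_n^-)$ are each i.i.d.\ with finite mean, so if the law holds for each of them separately, subtracting the two a.s.\ limits gives $S_n/n\to\mu$. Hence assume from now on $v_n\ge 0$, so that $S_n=\sum_{k=1}^n v_k$ is nondecreasing in $n$.

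Next I would truncate at level $n$: set $Y_n:=v_n\mathbf 1_{\{v_n\le n\}}$ and $T_n:=\sum_{k=1}^n Y_k$. Since $\sum_{n\ge1}\mathbb P(v_n>n)=\sum_{n\ge1}\mathbb P(v_1>n)\le \mathbb E v_1<\infty$, the Borel--Cantelli lemma gives $v_n=Y_n$ for all sufficiently large $n$ a.s., so $S_n-T_n$ is eventually constant and it suffices to show $T_n/n\to\mu$ a.s. The key estimate is the variance bound
$$
\sum_{n\ge1}\frac{\mathrm{Var}(Y_n)}{n^2}\le \sum_{n\ge1}\frac{1}{n^2}\,\mathbb E\!\left[v_1^2\mathbf 1_{\{v_1\le n\}}\right]\le C\,\mathbb E v_1<\infty,
$$
obtained by Tonelli's theorem, exchanging sum and expectation, and using $\sum_{n\ge k}n^{-2}\le C/k$ together with $x\cdot(C/x)=C$ on $\{v_1\le n\}$ — this is precisely the point where finiteness of the \emph{first} moment is leveraged.

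Then I would pass to a geometric subsequence: fix $\alpha>1$ and put $k_m:=\lfloor\alpha^m\rfloor$. By Chebyshev's inequality and the variance bound, $\sum_m\mathbb P\big(|T_{k_m}-\mathbb E T_{k_m}|>\varepsilon k_m\big)<\infty$ for every $\varepsilon>0$, so Borel--Cantelli yields $(T_{k_m}-\mathbb E T_{k_m})/k_m\to 0$ a.s.; since $\mathbb E Y_k\to\mu$ by dominated convergence, Cesàro averaging gives $\mathbb E T_{k_m}/k_m\to\mu$, hence $T_{k_m}/k_m\to\mu$ a.s. Finally I interpolate using monotonicity of $T_n$ (this is why the reduction to $v_n\ge 0$ was needed): for $k_m\le n\le k_{m+1}$,
$$
\frac{k_m}{k_{m+1}}\cdot\frac{T_{k_m}}{k_m}\le\frac{T_n}{n}\le\frac{k_{m+1}}{k_m}\cdot\frac{T_{k_{m+1}}}{k_{m+1}},
$$
and $k_{m+1}/k_m\to\alpha$, so a.s.\ $\mu/\alpha\le\liminf_n T_n/n\le\limsup_n T_n/n\le\alpha\mu$. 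Intersecting these events over a sequence $\alpha\downarrow 1$ forces $T_n/n\to\mu$ a.s., which completes the proof.

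The main obstacle is really just the single nontrivial computation, the variance estimate $\sum_n n^{-2}\mathbb E[v_1^2\mathbf 1_{\{v_1\le n\}}]<\infty$; once it is in hand, the two applications of Borel--Cantelli and the monotone interpolation are routine. An alternative, equally short route — closer to the classical treatment in \cite{Shiryaev96} — is to use the same variance bound inside Kolmogorov's maximal inequality to show that $\sum_k (Y_k-\mathbb E Y_k)/k$ converges a.s., and then apply Kronecker's lemma to conclude $(T_n-\mathbb E T_n)/n\to 0$; this avoids the geometric subsequence but relies on the same estimate.
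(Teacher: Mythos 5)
Your proposal is correct, but note that the paper does not prove this lemma at all: it is quoted as a known classical result with a reference to Shiryaev (p.~391), and is then used as a black box in the proof of Theorem~\ref{thm:KLLNstoch}. What you give is a genuine self-contained proof along Etemadi's truncation argument: reduction to nonnegative summands, truncation at level $n$ with Borel--Cantelli, the key estimate $\sum_n n^{-2}\mathbb E\bigl[v_1^2\mathbf 1_{\{v_1\le n\}}\bigr]\le C\,\mathbb E v_1$, Chebyshev plus Borel--Cantelli along the geometric subsequence $k_m=\lfloor\alpha^m\rfloor$, and monotone interpolation with $\alpha\downarrow 1$. All steps are sound; the only blemishes are cosmetic: the phrase ``$x\cdot(C/x)=C$'' should read $v_1^2\cdot(C/\max\{1,v_1\})\le C\,v_1$, and in the Chebyshev step you implicitly interchange $\sum_m k_m^{-2}\sum_{j\le k_m}\mathrm{Var}(Y_j)$ into $\sum_j \mathrm{Var}(Y_j)\sum_{m:\,k_m\ge j}k_m^{-2}\le C_\alpha\sum_j \mathrm{Var}(Y_j)/j^2$, which deserves a line. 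Compared with the classical Kolmogorov route that Shiryaev presents (maximal inequality, a.s.\ convergence of $\sum_k (Y_k-\mathbb E Y_k)/k$, then Kronecker's lemma) --- which you correctly identify as the alternative --- your argument buys a little extra generality (it only needs pairwise independence) at the cost of the subsequence-plus-interpolation bookkeeping; for the purposes of this paper either proof, or simply the citation, suffices, since only the statement of the strong law is used.
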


Even though the result of Theorem \ref{thm:KLLNstoch} is new,  its proof  is quite standard, and we give  it here only for completeness of presentation. Since random variables $\chi_{i,n}$, $i=1, \dots, m$, $n\in \mathbb N$, are bounded, we follow the approach recently employed in \cite{BKR2020}.

\begin{theorem}
\label{thm:KLLNstoch}
Let Assumption \ref{as:noise} and conditions \eqref{eq:diff_cond_stoch}, \eqref{def:phi}, \eqref {cond:mainstoch} hold. Then,  for each $\gamma\in (0, 1)$, there is $\Omega(\gamma)\subseteq \Omega$, $\mathbb P(\Omega(\gamma))>1-\gamma$, and $0<\delta<R$ such that for the initial values $X_0\in B_R(X^*)$ satisfying $\|X_0-X^*\|\le \delta$, a solution of \eqref{eq:VMTOC_var} converges to $X^*$ on  $\Omega(\gamma)$.
\end{theorem}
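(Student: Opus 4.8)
The plan is to combine the multiplicative ergodic estimate \eqref{eq:diff_cond_stoch} along a trajectory with Lemma~\ref{lem:Kolm} to produce, on a large-probability event, a deterministic-looking geometric decay rate, and then to choose the initial radius $\delta$ small enough that the trajectory never escapes $B_R(X^*)$ before the decay ``kicks in''. Concretely, as long as $X_k \in B_R(X^*)$ for $k=0,\dots,n$, iterating \eqref{eq:diff_cond_stoch} gives
\begin{equation*}
\|X_{n+1}-X^*\| \le \Big(\prod_{k=1}^{n+1}\nu(k)\Big)\,\|X_0-X^*\|
= \exp\Big(\sum_{k=1}^{n+1}\ln\nu(k)\Big)\,\|X_0-X^*\|.
\end{equation*}
Here $\nu(k)=\phi(d_{1,k},\dots,d_{m,k})$ are i.i.d.\ nonnegative random variables by the remark following \eqref{def:phi}, and $\ln\nu(k)$ are i.i.d.\ with $\mathbb E\ln\nu(k)=-\lambda_1<0$ by \eqref{cond:mainstoch}; note $\mathbb E|\ln\nu(k)|<\infty$ because the $\chi_{i,k}$ are bounded (Assumption~\ref{as:noise}) and $\phi$ is continuous, so $\nu(k)$ takes values in a compact set bounded away from $0$ on the parameter box, hence $\ln\nu(k)$ is bounded. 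Thus Lemma~\ref{lem:Kolm} applies to $v_k:=\ln\nu(k)$ and gives $\frac1n\sum_{k=1}^n\ln\nu(k)\to-\lambda_1$ a.s.

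Next I would turn the a.s.\ convergence into a uniform-in-$n$ control on a large event. Fix $\gamma\in(0,1)$. Since $\frac1n S_n\to-\lambda_1$ a.s., the random variable $\sup_{n\ge 1}\sum_{k=1}^n\ln\nu(k)$ has a version that is a.s.\ finite: indeed for a.e.\ $\omega$ there is $N(\omega)$ with $\sum_{k=1}^n\ln\nu(k)\le -\tfrac{\lambda_1}{2}n$ for all $n\ge N(\omega)$, so the partial sums are bounded above by $M(\omega):=\max\{0,\max_{1\le n< N(\omega)}\sum_{k=1}^n\ln\nu(k)\}<\infty$ a.s. Choosing $C>0$ with $\mathbb P(M\le C)>1-\gamma$ and setting $\Omega(\gamma):=\{M\le C\}$, on this event
\begin{equation*}
\prod_{k=1}^{n}\nu(k)\le e^{C}\qu\text{for all }n\ge 1,\qquad
\prod_{k=1}^{n}\nu(k)\to 0\ \text{as }n\to\infty .
\end{equation*}
Now set $\delta:=R\,e^{-C}$ (shrinking if necessary so $\delta<R$). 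Claim: if $\|X_0-X^*\|\le\delta$ then $X_n\in B_R(X^*)$ for all $n$ on $\Omega(\gamma)$. This follows by induction: assuming $X_0,\dots,X_n\in B_R(X^*)$, the iterated bound above together with $\prod_{k=1}^{n+1}\nu(k)\le e^{C}$ gives $\|X_{n+1}-X^*\|\le e^{C}\delta = R$... one needs strict inequality, so instead take $\delta:=\tfrac12 R e^{-C}$, yielding $\|X_{n+1}-X^*\|< R$. Once confinement is established, the same bound with $\prod_{k=1}^{n}\nu(k)\to0$ forces $\|X_n-X^*\|\to 0$ on $\Omega(\gamma)$, which is the asserted convergence.

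I would present the argument in this order: (i) identify $v_k=\ln\nu(k)$ as i.i.d.\ and bounded, verify $\mathbb E v_k=-\lambda_1<0$, invoke Lemma~\ref{lem:Kolm}; (ii) deduce a.s.\ finiteness of the supremum of the partial sums and hence existence of $C$ with $\mathbb P(\Omega(\gamma))>1-\gamma$; (iii) pick $\delta$ in terms of $R$ and $C$, run the induction that keeps the trajectory in $B_R(X^*)$ using \eqref{eq:diff_cond_stoch}; (iv) conclude convergence from $\prod\nu(k)\to0$. The main obstacle — and the only genuinely delicate point — is step (iii): \eqref{eq:diff_cond_stoch} is only assumed to hold for $X\in B_R(X^*)$, so the geometric estimate is conditional on the trajectory not having already left the ball, and a single early step with $\nu(k)>1$ could in principle push it out. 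This is exactly why $\delta$ must be chosen not merely small but small relative to the worst-case cumulative ``overshoot'' $e^{C}$ on $\Omega(\gamma)$; bounding that overshoot uniformly is what the Kolmogorov law buys us, and getting the quantifiers in the right order ($\gamma\to C\to\delta$) is the crux of the proof.
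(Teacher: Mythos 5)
Your proposal is correct and follows essentially the same route as the paper's proof: iterate \eqref{eq:diff_cond_stoch} while the trajectory stays in $B_R(X^*)$, apply Lemma~\ref{lem:Kolm} to the i.i.d.\ bounded variables $\ln\nu(k)$ with mean $-\lambda_1<0$, pass to a set $\Omega(\gamma)$ of probability exceeding $1-\gamma$ on which the pre-asymptotic overshoot of the partial sums is uniformly controlled, and choose $\delta$ small relative to that overshoot so the induction confining $X_n$ to $B_R(X^*)$ goes through, after which the negative drift forces convergence. The only (immaterial) difference is the bookkeeping of the overshoot: the paper uses a deterministic bound $\nu(n)<\mathcal M$ together with a nonrandom $N(\gamma)$ and sets $\delta=R\,\mathcal M^{-N(\gamma)}$, whereas you bound the running maximum of $\sum_{k\le n}\ln\nu(k)$ by a constant $C$ on $\Omega(\gamma)$ and set $\delta=\tfrac12 R e^{-C}$, which yields the same confinement-plus-decay argument.
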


\begin{proof}
Condition \eqref{eq:diff_cond_stoch} implies 
\begin{equation*}
\| (I-U_n) [F(X) - X^* ]\| \leq \exp\{\ln \nu(n)\} \|X - X^*\|,\quad n\in \mathbb N , \quad X\in B_R(X^*).
\end{equation*}
Since  $\left(\ln  \nu(n) \right)_{n\in \mathbb N}$ 
is a sequence of independent  identically distributed bounded random variables, Lemma \ref{lem:Kolm}  (Kolmogorov's Law of Large Numbers) can be applied. So, by \eqref{cond:mainstoch}, there is 
 a random integer $\mathcal N(\omega)$ such that,  for  $n\ge \mathcal N(\omega)$, a.s.,
\begin{equation}
\label{rel:1}
\sum_{i=1}^n \ln \mathcal \nu(i)<-\frac{\lambda_1}2 n.
\end{equation}
Therefore, for each $\gamma\in (0, 1)$, there exists a nonrandom integer $N=N(\gamma)$ and a set  $\Omega(\gamma)\subseteq \Omega$ with $\mathbb P(\Omega(\gamma))>1-\gamma$ such that  \eqref{rel:1} holds for $n\ge N(\gamma)$ on $\Omega(\gamma)$.

Since $d_{i,n}\in [0, 1)$,  $i=1, \dots, m$, $n\in \mathbb N$, see \eqref{def:d}, and by continuity of $\phi$, see \eqref{def:phi}, there exists a nonrandom number $\mathcal M>1$ such that, $\forall n\in \mathbb N$, on $\Omega$,
\[
\nu(n) <\mathcal M.
\]
Choose now $\delta=R\mathcal M^{-N(\gamma)}$ and let $\|X_0-X^*\|<\delta$. Then
\begin{equation*}
\begin{split}
\|X_{1} - X^*\| & =\| (I-U_1) [F(X_0) - X^* ]\| \leq \nu(1)\|X_0 - X^*\|<\mathcal M \delta<R, \\ & \mbox{so} \quad X_1\in B_R(X^*),
\\
\|X_{2} - X^*\| & =\| (I-U_2) [F(X_1) - X^* ]\| \leq \nu(2)\|X_1 - X^*\|<\nu(2)\nu(1)\|X_0 - X^*\|\\
&=\exp\left\{\sum_{i=1}^2\ln \mathcal \nu(i)\right\} \|X_0 - X^*\|<
\mathcal M^2 \delta<R, \quad \mbox{so} \quad X_2\in B_R(X^*),
\\
&\cdots\\
\|X_{N} - X^*\| & =\| (I-U_N) [F(X_{N-1}) - X^* ]\| \leq \nu(N)\|X_{N-2} - X^*\|\\&
 \leq \exp\left\{\sum_{i=1}^N \ln \mathcal \nu(i)\right\}\|X_0 - X^*\|<\mathcal M^N \delta<R, \quad \mbox{so} \quad X_N\in B_R(X^*).
\end{split}
\end{equation*}
Then we can keep solution $X_n$ in $B_R(X^*)$ for at least $N=N(\gamma)$ steps, until estimate \eqref{rel:1} (the Kolmogorov's Law of Large Numbers) starts working on $\Omega(\gamma)$. Further, we get, on $\Omega(\gamma)$,
\begin{equation*}
\begin{split}
\|X_{N+1} - X^*\| & =\| (I-U_{N+1}) [F(X_N) - X^* ]\| \leq \nu(N+1)\|X_N - X^*\|\\&
 \leq \exp\left\{\sum_{i=1}^{N+1} \ln \mathcal \nu(i)\right\}\|X_0 - X^*\|
 \leq \exp\left\{-\frac{\lambda_1(N+1)}2\right\}\|X_0 - X^*\| \\ & \leq \|X_0 - X^*\|<\delta,
\quad \mbox{and for any} \quad n>N+1,\\
\|X_{n} - X^*\| & \leq \exp\left\{\sum_{i=0}^{n} \ln \mathcal \nu(i)\right\}\|X_0 - X^*\|
\leq \exp\left\{-\frac{\lambda_1n}2\right\} \|X_0 - X^*\|\\
& < \exp\left\{-\frac{\lambda_1n}2\right\}\delta\to 0, \quad \mbox{as} \quad n\to \infty,
\end{split}
\end{equation*}
which provides convergence of a solution to $X^*$ on $\Omega(\gamma)$. 
\end{proof}  

\subsection{H\'{e}non and Lozi maps with a stochastic control}
\label {subsec:stochHL} 

Let $d_{1,n}$ and $d_{2,n}$ be defined as in \eqref{def:d}.  
Following the methods and the notations of Section~\ref{subsec:varHL},  we conclude that stochastic VMTOC of  H\'{e}non \eqref{henon_toc_diag} and  Lozi  \eqref{lozi_toc_diag} maps  can be transformed to the equation
\begin{equation*}
(I-U_{n+1})[F(X_n)-X^*]
=\mathcal C_{n+1}(x_n) \left[ \begin{array}{c} x_n - x^* \\ y_n-y^* \end{array} \right].
\end{equation*}
In the case of the H\'{e}non map \eqref{henon_toc_diag} we have
\begin{equation*}
\mathcal C_{n+1}(x_n)=:\left[ \begin{array}{cc} -a\left( 1-\alpha_1 - \ell_1 \chi_{1, n+1} \right) 
(x_n+ x^*) & \left(1- \alpha_1 - \ell_1 \chi_{1, n+1} \right) \\ \left( 1-\alpha_2 - \ell_2 \chi_{2, n+1} \right)  b & 0 \end{array} \right],
\end{equation*}
while, in the case of the Lozi map \eqref{lozi_toc_diag} and $x_nx^*>0$, 
\begin{equation*}
\mathcal C_{n+1}(x_n)=\mathcal C_{n+1}=:\left[ \begin{array}{cc} -a\left( 1-\alpha_1 - \ell_1 \chi_{1, n+1} \right) 
& \left(1- \alpha_1 - \ell_1 \chi_{1, n+1} \right) \\ b\left( 1-\alpha_2 - \ell_2 \chi_{2, n+1} \right)& 0 \end{array} \right].
\end{equation*}
In  general,  the norms $\|\mathcal C_n\|$ are  not less than one on all  $\Omega$, and we get stability only on  a smaller set $\Omega_\gamma\subset \Omega$. If however, the coefficients $\alpha_i$ and $\ell_i$, $i=1,2$, 
are chosen in such a way that $\|\mathcal C_n\|\le \mu^*<1$ on $\Omega$, for all $n\in \mathbb N$, we are in the same situation as in Section~\ref{sec_main}.  For the H\'{e}non map it holds  when $\alpha_*<\alpha_1-\ell_1$ and $\beta_*<\alpha_2-\ell_2$,  since $\alpha_1-\ell_1\le \alpha_1+\ell_1\chi_{1,n} $ and $\alpha_2-\ell_2\le \alpha_1+\ell_1 \chi_{2, n} $, for each $n\in \mathbb N$.
Here $\alpha_*$ and $\beta_*$ are the convergence bounds from Proposition~\ref{prop_henon}. The similar arguments are applied for the Lozi map.
Therefore, Propositions~\ref{prop_henon} and \ref{prop_lozi}  immediately imply the following results.

\begin{proposition}
\label{prop_henon_stoch}
Let $(x^*,y^*)$ be an equilibrium of \eqref{henon}, $R<|x^*|$, and constants $\alpha_*$ and $\beta_*$ be the convergence bounds from Proposition~\ref{prop_henon}. Let $X$ be a solution  to  \eqref{henon_toc_diag}
with $d_{1,n}$ and $d_{2,n}$ defined as in \eqref{def:d}. 
Then for any $\alpha_1 \in (\alpha_*,1)$, $\alpha_2 \in (\beta_*,1)$, $\ell_1<\min\{ \alpha_1 - \alpha_*, 1-\alpha_1\}$ and $\ell_2<\min\{ \alpha_2 - \beta_*, 1-\alpha_2\}$, $X$ converges to $(x^*,y^*)$ for any initial value $(x_0,y_0)\in  B_R(X^*)$. 
\end{proposition}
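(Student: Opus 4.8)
The plan is to reduce this stochastic statement entirely to the deterministic variable-control result of Proposition~\ref{prop_henon}, by showing that, under the stated bounds on $\ell_1,\ell_2$, every realization of the stochastic control stays inside the deterministic stability window for \emph{all} $n$ and \emph{all} $\omega\in\Omega$. First I would note that Assumption~\ref{as:noise} gives $|\chi_{i,n}|\le 1$, so from \eqref{def:d} the controls $d_{1,n}=\alpha_1+\ell_1\chi_{1,n}$ and $d_{2,n}=\alpha_2+\ell_2\chi_{2,n}$ satisfy $d_{1,n}\in[\alpha_1-\ell_1,\alpha_1+\ell_1]$ and $d_{2,n}\in[\alpha_2-\ell_2,\alpha_2+\ell_2]$ pointwise on $\Omega$. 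The hypothesis $\ell_1<\min\{\alpha_1-\alpha_*,\,1-\alpha_1\}$ then forces $\alpha_*<\alpha_1-\ell_1$ and $\alpha_1+\ell_1<1$, hence $d_{1,n}(\omega)\in(\alpha_*,1)$ for every $n\in\mathbb N$ and every $\omega\in\Omega$; likewise $\ell_2<\min\{\alpha_2-\beta_*,\,1-\alpha_2\}$ yields $d_{2,n}(\omega)\in(\beta_*,1)$.

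Next I would fix an arbitrary $\omega\in\Omega$ and freeze the noise. For this $\omega$, the sequences $\alpha_n:=d_{1,n+1}(\omega)$ and $\beta_n:=d_{2,n+1}(\omega)$ are merely deterministic numbers lying in $(\alpha_*,1)$ and $(\beta_*,1)$, respectively, and the realized trajectory of the stochastically controlled H\'enon map \eqref{henon_toc_diag} with $d_{1,n},d_{2,n}$ from \eqref{def:d} is exactly a solution of the deterministic step-dependent VMTOC \eqref{henon_toc_diag} with these admissible parameters. Applying Proposition~\ref{prop_henon} (with the same fixed norm, radius $R<|x^*|$ and prescribed convergence rate $\nu^*$ used there), this trajectory converges to $(x^*,y^*)$ for every initial value $(x_0,y_0)\in B_R(X^*)$. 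Since $\omega$ was arbitrary, convergence holds on all of $\Omega$, which is the assertion.

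The argument has essentially no hard step: it is a uniform-in-$\omega$ reduction rather than a genuinely stochastic argument, and in particular the Kolmogorov Law of Large Numbers behind Theorem~\ref{thm:KLLNstoch} is not invoked here, precisely because under the present choice of $\ell_i$ one has $\nu(n)\le\nu^*<1$ surely. The only point deserving a word of care is that the contraction factor $\nu^*$ supplied by Proposition~\ref{prop_henon} (through $\alpha_*=1-\nu^*/\max\{1,L\|I\|\}$) is the \emph{same} for all admissible $\alpha_n,\beta_n$, so one actually obtains the uniform geometric bound $\|X_n-X^*\|\le(\nu^*)^n\|X_0-X^*\|$ on all of $\Omega$, not just qualitative convergence. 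I would also remark that the identical reduction, with Proposition~\ref{prop_lozi} in place of Proposition~\ref{prop_henon}, proves the companion statement for the Lozi map.
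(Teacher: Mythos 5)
Your proposal is correct and is essentially the paper's own argument: the paper likewise observes that, pathwise, $d_{1,n}\in[\alpha_1-\ell_1,\alpha_1+\ell_1]\subset(\alpha_*,1)$ and $d_{2,n}\in[\alpha_2-\ell_2,\alpha_2+\ell_2]\subset(\beta_*,1)$ on all of $\Omega$, so each realization is a deterministic step-dependent VMTOC trajectory covered by Proposition~\ref{prop_henon}, and no law-of-large-numbers argument is needed. Your added remark about the uniform geometric rate $(\nu^*)^n$ is consistent with the paper's deterministic Theorem~\ref{toc_lemma_var} and does not change the argument.
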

\begin{proposition}
\label{prop_lozi_stoch}
Let $(x^*,y^*)$ be an equilibrium of \eqref{lozi}, $R<|x^*|$, and constants $\alpha_*$ and $\beta_*$ be the convergence bounds from Proposition~\ref{prop_lozi}. Let $X$ be a solution  to  \eqref{lozi_toc_diag}
with $d_{1,n}$ and $d_{2,n}$ defined as in \eqref{def:d}. Then for any 
$\alpha_1 \in (\alpha_*,1)$, $\alpha_2 \in (\beta_*,1)$, $\ell_1<\min\{ \alpha_1 - \alpha_*, 1-\alpha_1\}$ and $\ell_2<\min\{ \alpha_2 - \beta_*, 1-\alpha_2\}$, $X$ converges to $(x^*,y^*)$ for any initial value $(x_0,y_0)\in  B_R(X^*)$. 
\end{proposition}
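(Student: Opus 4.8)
The plan is to reduce the stochastic problem, \emph{pathwise}, to the deterministic step-dependent control already handled in Proposition~\ref{prop_lozi}, so that no limit theorem (in particular not Theorem~\ref{thm:KLLNstoch}) is needed: here the bounded-noise Assumption~\ref{as:noise} does all the work, because the realized controls never leave the interval in which the deterministic result applies.

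First I would fix $\omega\in\Omega$ and examine the realized controls. By Assumption~\ref{as:noise}, $|\chi_{i,n}(\omega)|\le 1$, hence $d_{i,n}(\omega)=\alpha_i+\ell_i\chi_{i,n}(\omega)\in[\alpha_i-\ell_i,\alpha_i+\ell_i]$ for every $n$. The hypotheses $\ell_1<\min\{\alpha_1-\alpha_*,\,1-\alpha_1\}$ and $\ell_2<\min\{\alpha_2-\beta_*,\,1-\alpha_2\}$ then give, uniformly in $n$ and $\omega$, $\alpha_*<d_{1,n}(\omega)<1$ and $\beta_*<d_{2,n}(\omega)<1$. Writing $\alpha_n(\omega):=d_{1,n+1}(\omega)$ and $\beta_n(\omega):=d_{2,n+1}(\omega)$, the family $\{U_n(\omega)\}$ is therefore an admissible sequence of diagonal matrices for the deterministic system \eqref{lozi_toc_diag} with control intensities in $(\alpha_*,1)\times(\beta_*,1)$. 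Moreover, the restriction $R<|x^*|$ forces $\mathrm{sgn}(x_n)=\mathrm{sgn}(x^*)$ whenever $(x_n,y_n)\in B_R(X^*)$, so on that ball the Lozi map agrees with its relevant linear branch and the uniform Lipschitz bound \eqref{def:cal_B} holds with constants independent of $R$ --- precisely the setting of Proposition~\ref{prop_lozi}.

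Second, I would apply Proposition~\ref{prop_lozi} to the realized deterministic control sequence $\{U_n(\omega)\}$, which yields that the solution $X_n(\omega)$ of \eqref{lozi_toc_diag} converges to $(x^*,y^*)$ for every initial value $(x_0,y_0)\in B_R(X^*)$. Since $\omega\in\Omega$ was arbitrary, this convergence holds for all $\omega$, hence almost surely, which is the assertion. I do not expect a genuine obstacle: the only point requiring care is the inequality bookkeeping --- checking that the two-sided bounds $\alpha_*<\alpha_1-\ell_1$, $\alpha_1+\ell_1<1$ (and their analogues for the second component) really do pin $d_{i,n}(\omega)$ into the open intervals used in Proposition~\ref{prop_lozi} for \emph{every} $n$ and \emph{every} $\omega$, so that the deterministic result can be quoted verbatim. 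Equivalently, one could verify hypothesis \eqref{eq:diff_cond_var} of Theorem~\ref{toc_lemma_var} directly with a nonrandom $\nu^*<1$ using $\|I-U_n(\omega)\|\le 1-\alpha_*$ together with \eqref{def:cal_B}; invoking Proposition~\ref{prop_lozi} is simply the shortest route.
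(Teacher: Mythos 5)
Your proposal is correct and follows essentially the same route as the paper: the authors likewise observe that, since $|\chi_{i,n}|\le 1$, the realized controls satisfy $\alpha_*<\alpha_1-\ell_1\le d_{1,n}\le \alpha_1+\ell_1<1$ and $\beta_*<\alpha_2-\ell_2\le d_{2,n}\le\alpha_2+\ell_2<1$ for every $n$ and $\omega$, so the conclusion follows pathwise and immediately from Proposition~\ref{prop_lozi}, with no appeal to Theorem~\ref{thm:KLLNstoch}. Your bookkeeping (including the sign consistency forced by $R<|x^*|$) matches the paper's argument.
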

Proceed now to a more general situation, when introduction of noise contributes to reducing the minimum values of control parameters which guarantee stability. 

In order to apply Theorem~\ref{thm:KLLNstoch}, we need to find 
control parameters $\alpha_1, \alpha_2, \ell_1, \ell_2$ and $\lambda_1>0$ such that  condition \eqref{cond:mainstoch} holds for some $\nu(n)>\max_{X\in B(X^*, R)}\| \mathcal C_{n}(x)\|$. Note that $\nu(n)$ and values of  parameters depend on the particular norm $\| \cdot\|$ and distributions of noises $\chi_1$ and $\chi_2$. 
In the following Sections \ref{subsec:stochHen} and \ref{subsec:stochLozi}, we consider $\|\cdot \|_\infty$ and $\|\cdot \|_1$, Bernoulli and uniform continuous distributions for the noise, 
and illustrate obtained results with computer simulations.

\subsubsection{\bf H\'{e}non}
\label{subsec:stochHen}

\begin{enumerate}
\item {\it Norm $\|\cdot \|_\infty$, $\chi_1$ is Bernoulli distributed.}
Since $|x+x^*|\le 2x^*+R$, we have, for $X\in B_R(X^*)$,
\begin{equation*}
\| \mathcal C_{n}(x)\|_\infty\le\max\left\{\left(1- \alpha_1 - \ell_1 \chi_{1, n} \right)[a(2x^*+R)+1], \, \left( 1-\alpha_2 - \ell_2 \chi_{2, n} \right)  b \right\}.
\end{equation*}
Theorem \ref{thm:KLLNstoch} states only local stability for any given probability, therefore we can assume 
$R \approx 7.1\times 10^{-5}$, so that $a(2x^*+R) \approx 1.7677$ and use local estimates at the equilibrium point for 
$\| \mathcal C_{n}(x)\|_\infty$:
\begin{equation*}
\| \mathcal C_{n}(x)\|_\infty\le\max\left\{2.7677 \left(1- \alpha_1 - \ell_1 \chi_{1, n} \right), \, 0.3\left( 1-\alpha_2 - \ell_2 \chi_{2, n} \right)  \right\}.
\end{equation*}
If 
\begin{equation}
\label{ineq:a12}
  1-\alpha_2 + \ell_2<9.2256 \left(1- \alpha_1 - \ell_1 \right)
 \end{equation}
then
$
\max\left\{2.7677 \left(1- \alpha_1 - \ell_1 \chi_{1, n} \right), \, 0.3\left( 1-\alpha_2 - \ell_2 \chi_{2, n} \right)  \right\}=2.7677 \left(1- \alpha_1 - \ell_1 \chi_{1, n} \right).
$

Choose first $\alpha_1, \ell_1$ such that  condition \eqref{cond:mainstoch} holds for 
 $\nu(n) := 2.7677 \left(1- \alpha_1 - \ell_1 \chi_{1, n} \right)$ and for some $\lambda_1>0$.
Since $\chi_1$ is Bernoulli distributed, we get $\mathbb E \ln \nu(n)=\ln  2.7677 +0.5\ln \left( (1-\alpha_1)^2- \ell_1^2\right)$, which leads to the following estimation 
\[
\ell_1^2>(1-\alpha_1)^2-0.1305.
\]
To construct the interval for $\ell_1$ in \eqref{def:d}, we take $\alpha_1^2>(1-\alpha_1)^2-0.1305$,
which gives $\alpha_1>\frac{0.8695}2=0.4347$, so, for $\alpha_1=0.44$ we have $\ell_1>\sqrt{0.1831}=0.4279$.  The right-hand side of the inequality in condition \eqref {ineq:a12} for $\alpha_1=0.44$, $\ell_1=0.4279$ gives $1.2186 > 1$, which means that  \eqref {ineq:a12} holds for each $\alpha_2, \ell_2$ satisfying \eqref{def:d}.

For $\alpha_1=0.44$ and no control in $y$ ($\alpha_2 = \ell_2 = 0$),  
Fig.~\ref{figure_ex_1_fig_3} confirms that there is stabilization already for $\ell_1 = 0.3$.
The series of runs with the same initial conditions illustrate that, without noise in the control, we have a stable two-cycle (Fig.~\ref{figure_ex_1_fig_3} (a)). As small noise with $\ell_1=0.15$ is introduced, a noisy around this two-cycles solution nearly reaches the equilibrium (Fig.~\ref{figure_ex_1_fig_3} (b)). As noise increases to $\ell_1 = 0.25$, the $x$-trajectory already looks as a stochastic perturbation of the equilibrium (Fig.~\ref{figure_ex_1_fig_3} (c)), and the equilibrium becomes stable for $\ell_1=0.3< 0.4279$  (Fig.~\ref{figure_ex_1_fig_3} (d)). We can also observe that much smaller noise does not guarantee stabilization.

\begin{figure}
\subfloat[]{%
\resizebox*{3cm}{!}{\includegraphics{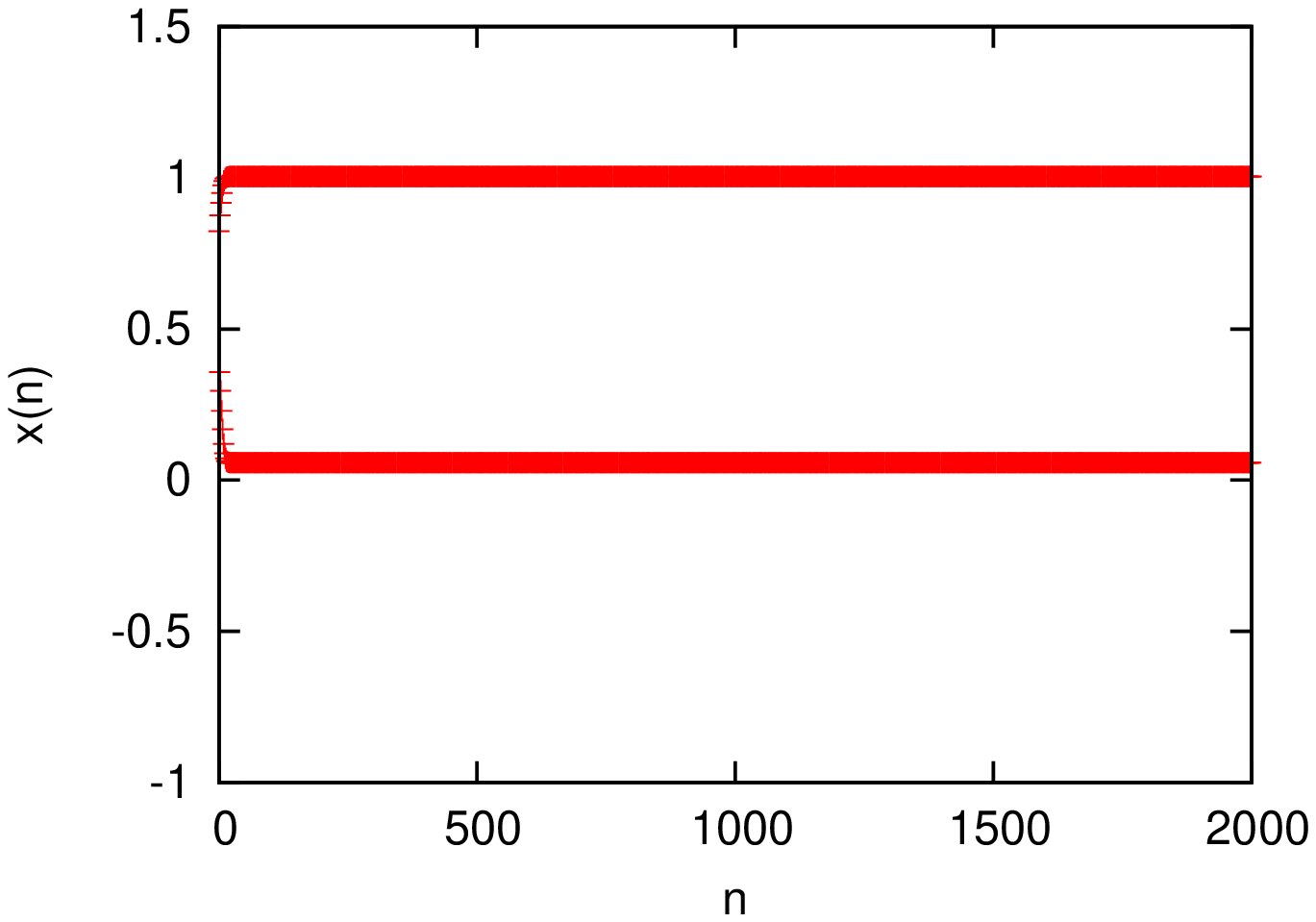}}}
\hspace{5pt}
~~~
\subfloat[]{%
\resizebox*{3cm}{!}{\includegraphics{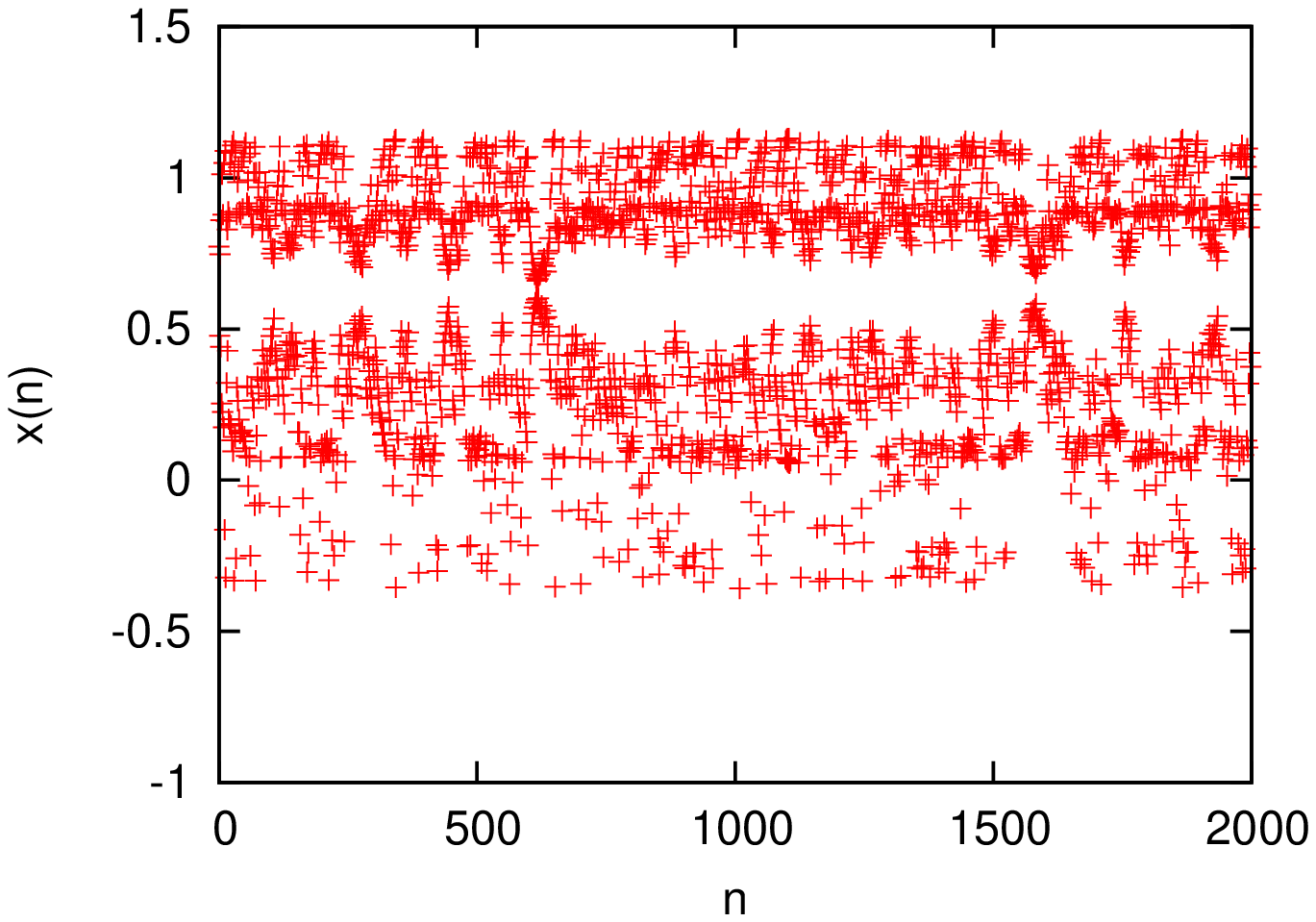}}}
~~~
\subfloat[]{
\resizebox*{3cm}{!}{\includegraphics{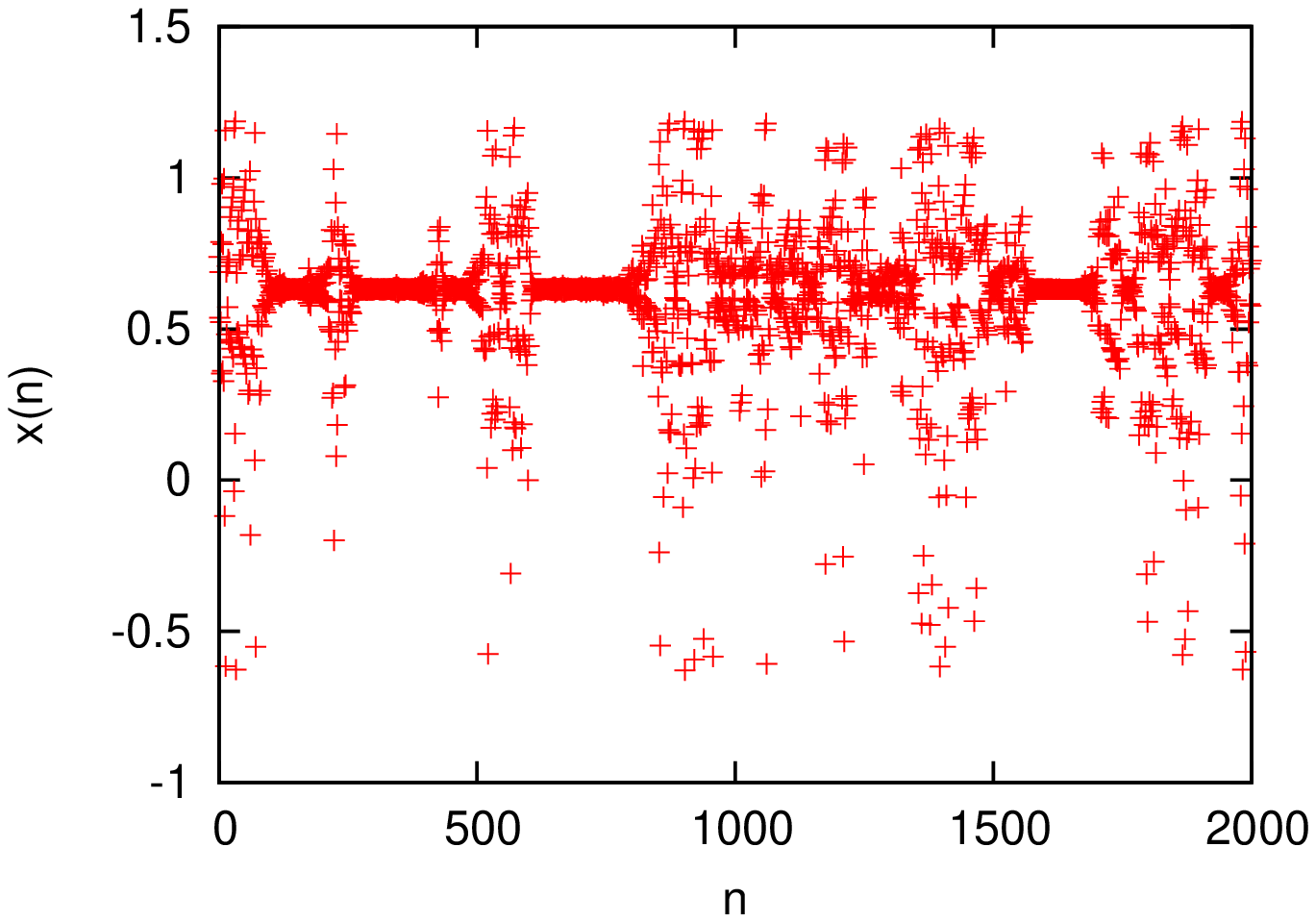}}}
\hspace{5pt}
~~~
\subfloat[]{%
\resizebox*{3cm}{!}{\includegraphics{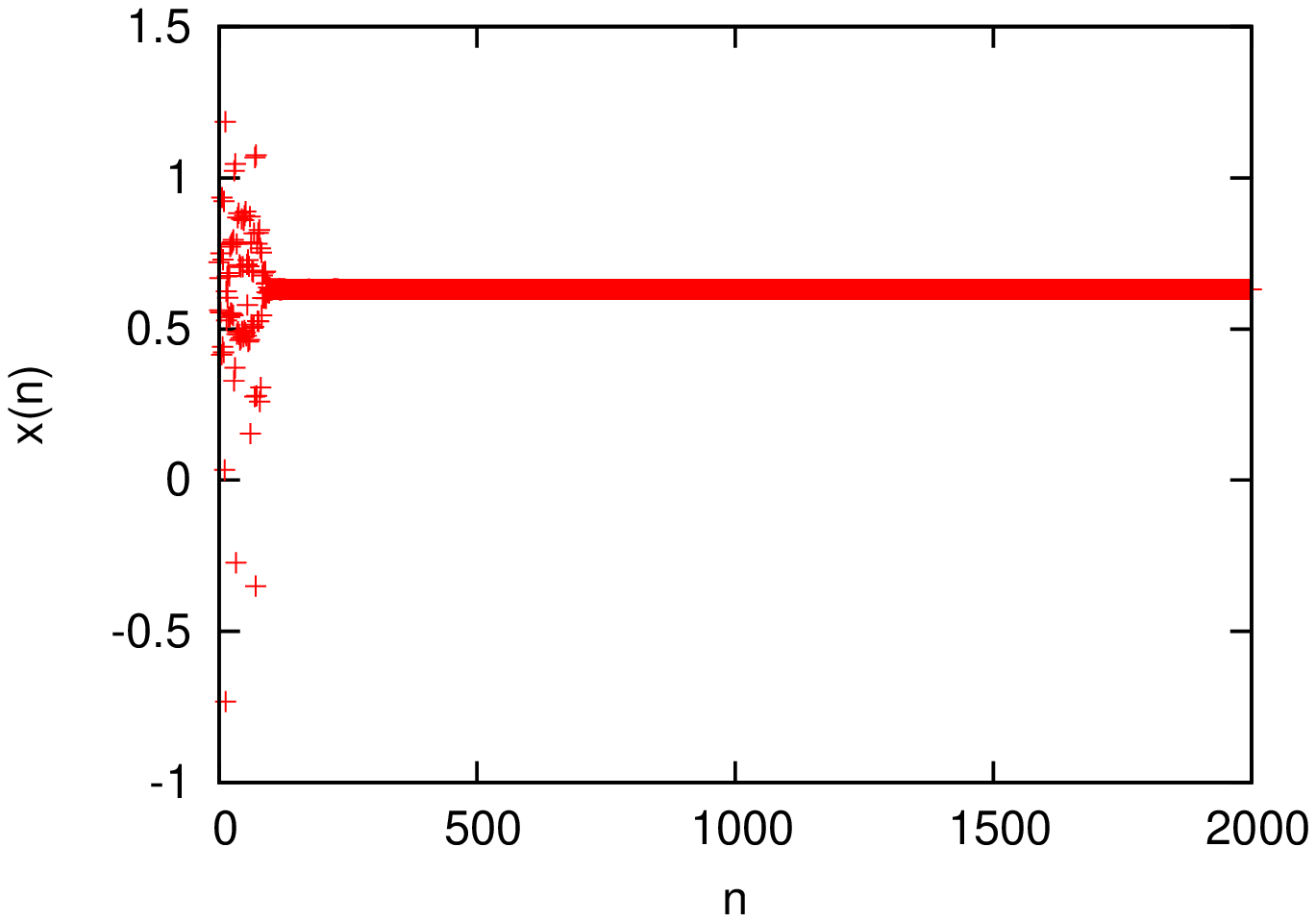}}}
\caption{Runs of the H\'{e}non map for $x$-coordinate only, $\alpha = 0.44$, no control in 
$y$, $x_0=0.3$, $y_0=0.1$ and (a) $\ell = 0$, (b) $\ell = 0.15$, (c) $\ell = 0.25$, (d) $\ell = 0.3$ 
for the Bernoulli noise.
\bigskip
}
\label{figure_ex_1_fig_3}
\end{figure}

Two-dimensional Fig.~\ref{figure_ex_1_fig_4} with the attracting limit sets also illustrates  the way to stabilization for $\alpha=0.3< 0.4279$.
There is a stable two cycle without noise (not illustrated), and close to this cycle blurred set for $\ell_1 = 0.05$ (Fig.~\ref{figure_ex_1_fig_4} (a)), which widens with the increase of the noise to $\ell_1 = 0.15$, still not reaching the equilibrium point (Fig.~\ref{figure_ex_1_fig_4} (b)), getting a blurred equilibrium for $\ell_1=0.25$ (Fig.~\ref{figure_ex_1_fig_4} (c)) and stabilization of the equilibrium point for $\ell_1=0.3$ (Fig.~\ref{figure_ex_1_fig_4} (d)).

\begin{figure}
\subfloat[]{%
\resizebox*{3cm}{!}{\includegraphics{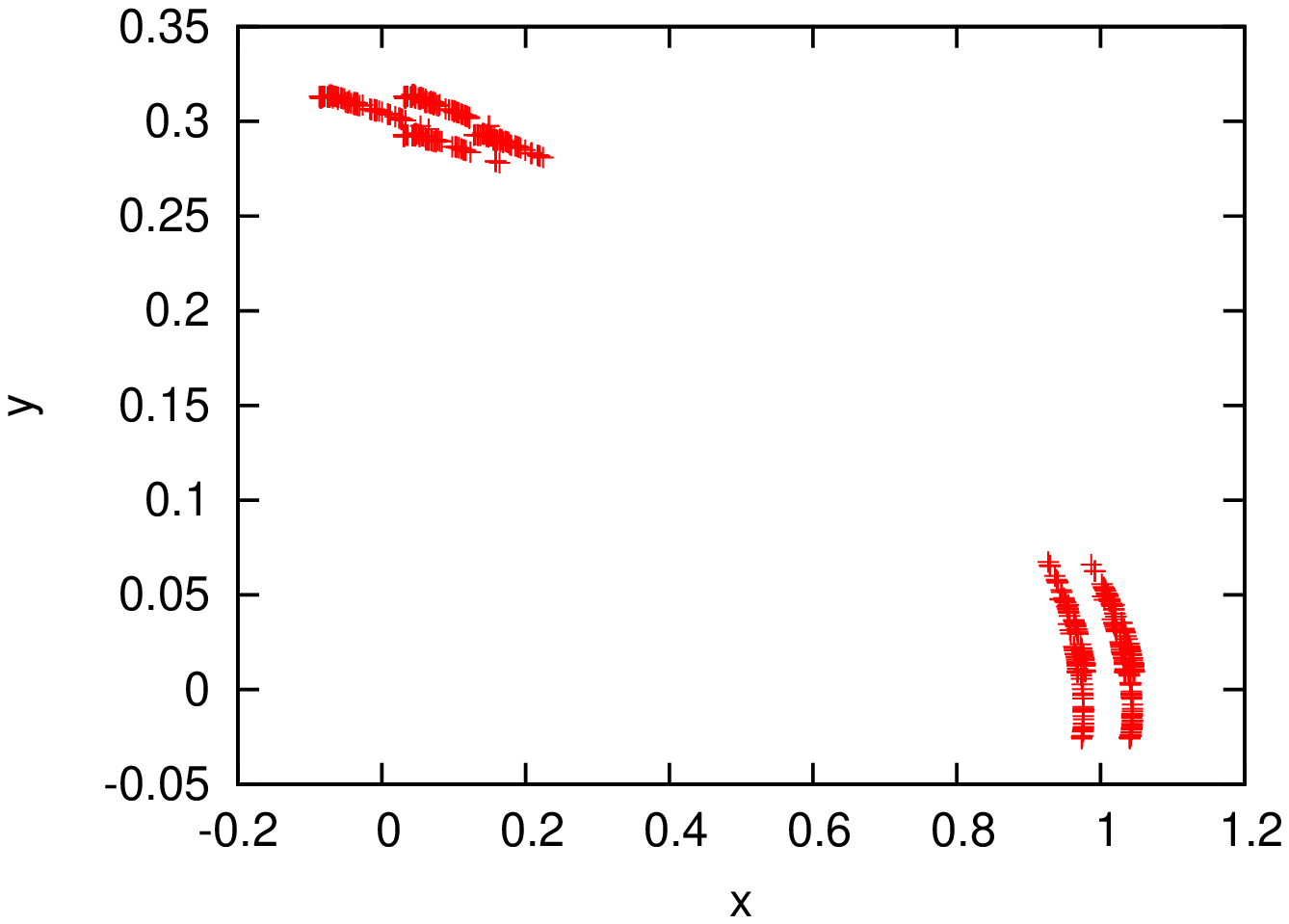}}}
\hspace{5pt}
~~~
\subfloat[]{%
\resizebox*{3cm}{!}{\includegraphics{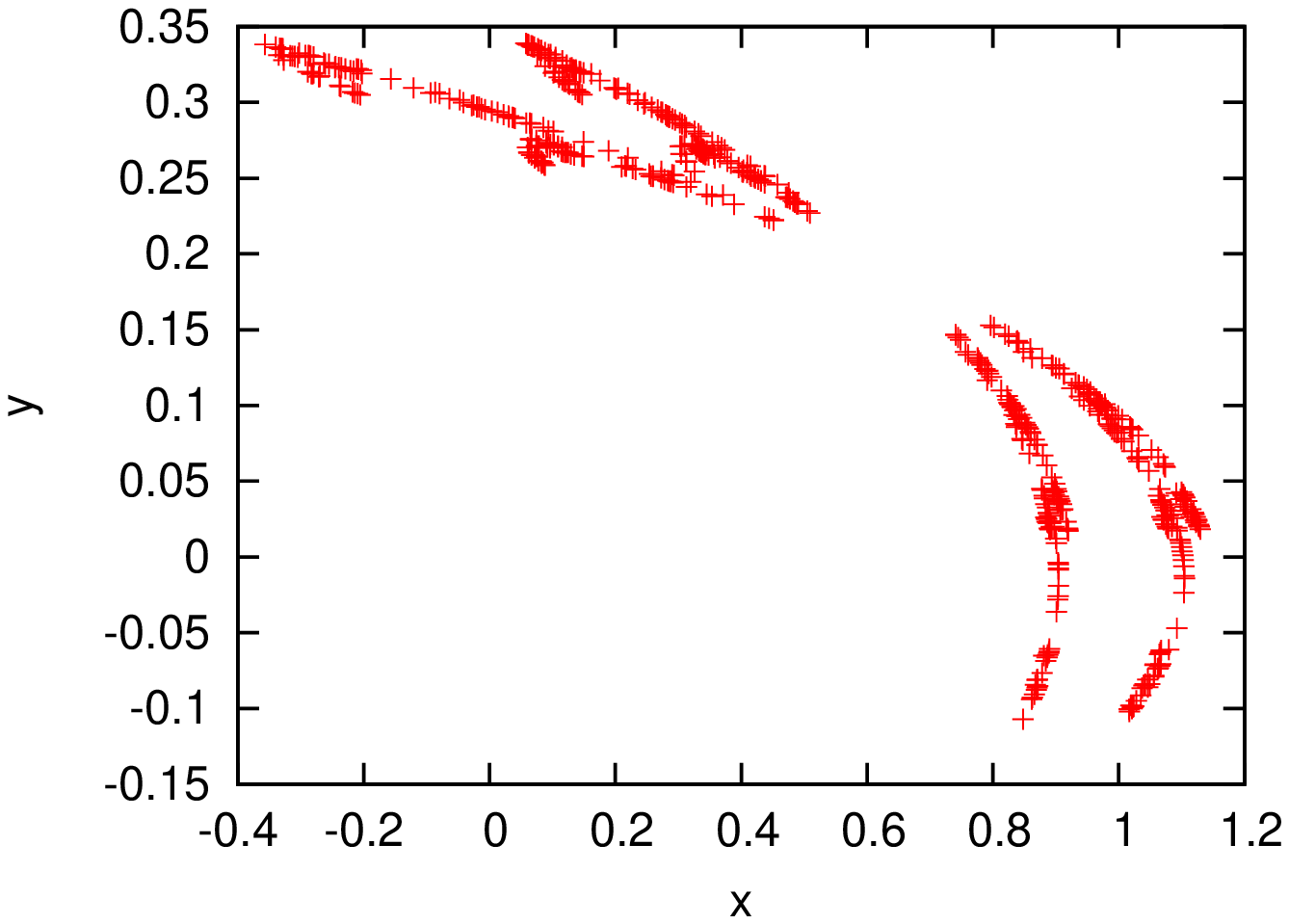}}}
~~~
\subfloat[]{
\resizebox*{3cm}{!}{\includegraphics{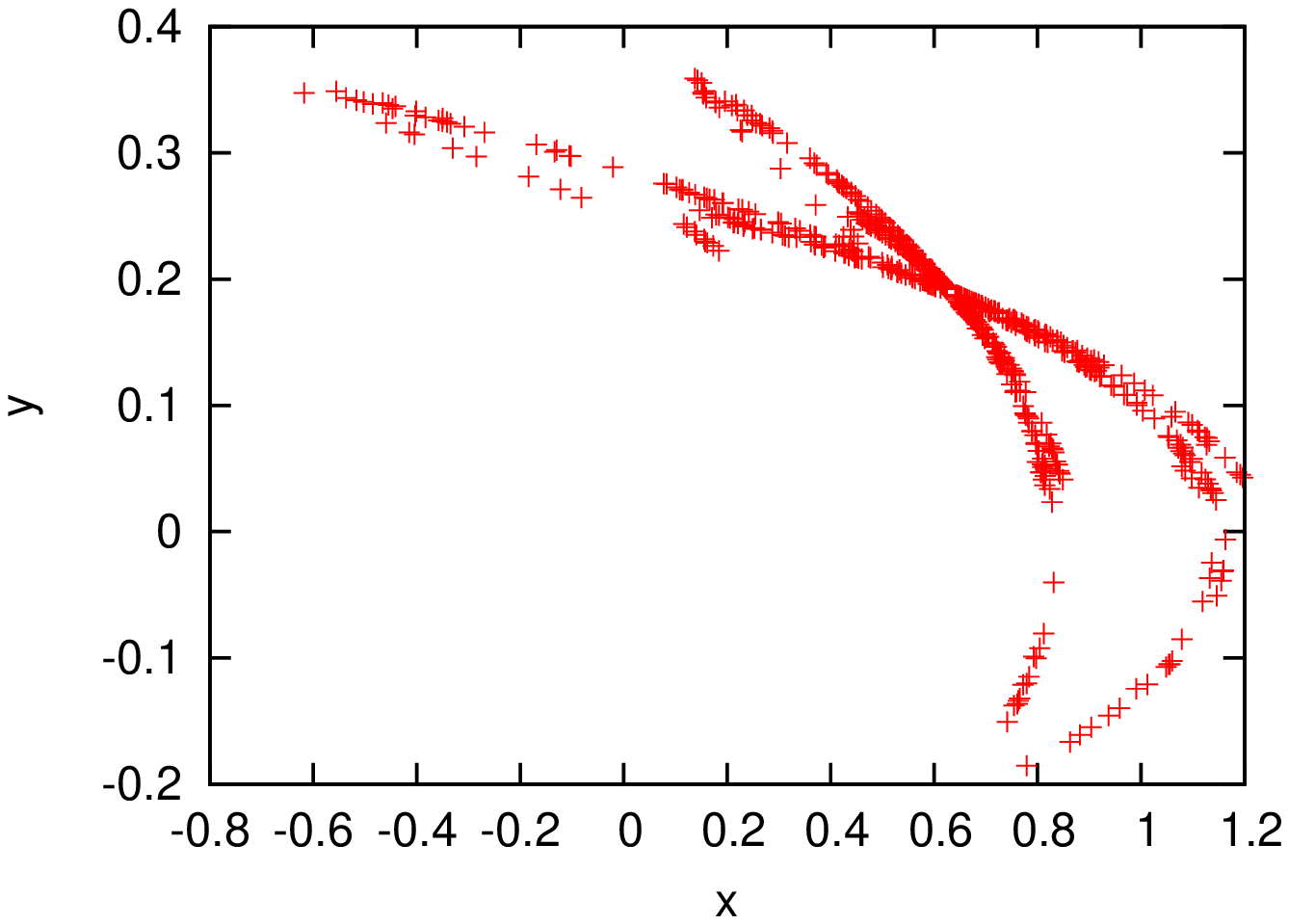}}}
\hspace{5pt}
~~~
\subfloat[]{%
\resizebox*{3cm}{!}{\includegraphics{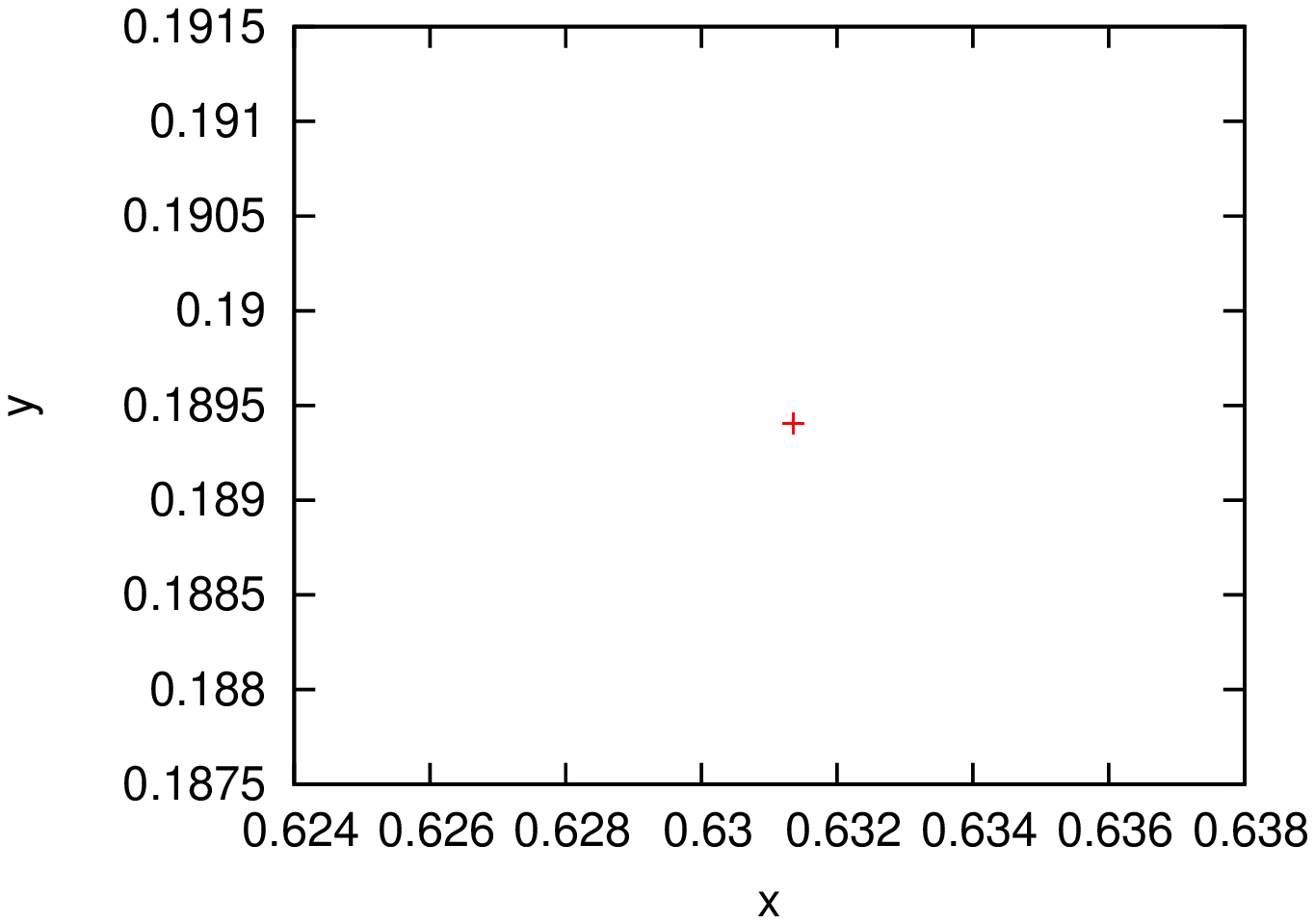}}}
\caption{The limit set for the H\'{e}non map for  $\alpha = 0.44$, no control in
$y$, $x_0 = 0.3$, $y_0=0.1$ and (a) $\ell = 0.05$, (b) $\ell = 0.15$, (c) $\ell = 0.25$, (d) $\ell = 0.3$ for the Bernoulli 
noise.
\bigskip
}
\label{figure_ex_1_fig_4}
\end{figure}

\item 
{\it Norm $\| \cdot\|_1$, $\chi_1$ is Bernoulli distributed, $\ell_2=0$. } Since $2x^*a=1.7676>1$, we have 
\begin{equation*}
\begin{split}
\| \mathcal C_{n}(x)\|_1 \le & \max\left\{\left(1- \alpha_1 - \ell_1 \chi_{1, n} \right)a(2x^*+R)+\left( 1-\alpha_2 - \ell_2 \chi_{2, n} \right)  b, \right. \\ & \left. \left(1- \alpha_1 - \ell_1 \chi_{1, n} \right) \right\}\\
& \approx 1.7677 \left( 1-\alpha_1 - \ell_1 \chi_{1, n} \right)+0.3\left( 1-\alpha_2 - \ell_2 \chi_{2, n} \right)=:\nu(n),\\
\mathbb E \ln \nu(n)& \approx 0.5\ln \left[ \left(1.7677 ( 1-\alpha_1)+0.3( 1-\alpha_2)\right)^2-\left(1.7677 \ell_1\right)^2\right].
\end{split}
\end{equation*}
So $\ell^2_1>\left(( 1-\alpha_1)+0.1697( 1-\alpha_2)\right)^2-0.3200 \implies \mathbb E \ln \nu(n)<0$.
For $\alpha_2=0.8$, $\ell_1=0.2862$ and $\alpha_1=0.4$  we get $\mathbb E \ln \nu(n)=\ln(0.9999)<0$ along with 
the fact that condition \eqref{def:d} holds.

\item{\it Norm $\| \cdot \|_1$, $\chi_1$ has  uniform continuous distribution, $\ell_2=0$. }
We have
\begin{equation*}
\begin{split}
&\mathbb E \ln \nu(n) \approx \frac 12\int_{-1}^1\ln \left[ \left(1.7677 ( 1-\alpha_1)+0.3( 1-\alpha_2)\right)-1.7677 \ell_1z\right]dz.\\
\end{split}
\end{equation*}
The set of parameters 
$\alpha_2=0.9$, $\alpha_1=0.44$, $\ell_1=0.2862$  gives  $\mathbb E \ln \nu(n) \approx -0.0251<0$ and, also, condition \eqref{def:d} holds.

Fig.~\ref{figure_ex_2_fig_1} illustrates and compares introduction of Bernoulli and uniformly distributed on $[-1,1]$ noises for the H\'{e}non map, using a part of the bifurcation diagram for $x$ in the range of parameters of $\alpha_1$ close to the last bifurcation leading to stabilization of the equilibrium, with $\alpha_2=0.8$-control in $y$. In Fig.~\ref{figure_ex_2_fig_1} (a), for $\ell_1=0$, we see a period-halving bifurcation; in (b) and (c) figures (Bernoulli and uniform noise, respectively, both with the amplitude $\ell_1=0.2861$), the value of $\alpha_1$ for this bifurcation is smaller ($\approx 0.39$ and $\approx 0.432$, respectively), with some stabilization advantage of the Bernoulli distribution. 
Fig.~\ref{figure_ex_2_fig_1} also shows that the image for the uniformly distributed perturbation looks `noisier' ((c) compared to (b)). Note that compared to the above computations, Fig.~\ref{figure_ex_2_fig_1} (c) illustrates stabilization for $\ell_1=0.2861$, $\alpha_1 \approx 0.432<0.44$ and
$\alpha_2=0.8<0.9$. The case $\alpha_2=0.9$ is further tested in Fig.~\ref{figure_ex_2_fig_2}.

\begin{figure}
\subfloat[]{%
\resizebox*{3.5cm}{!}{\includegraphics{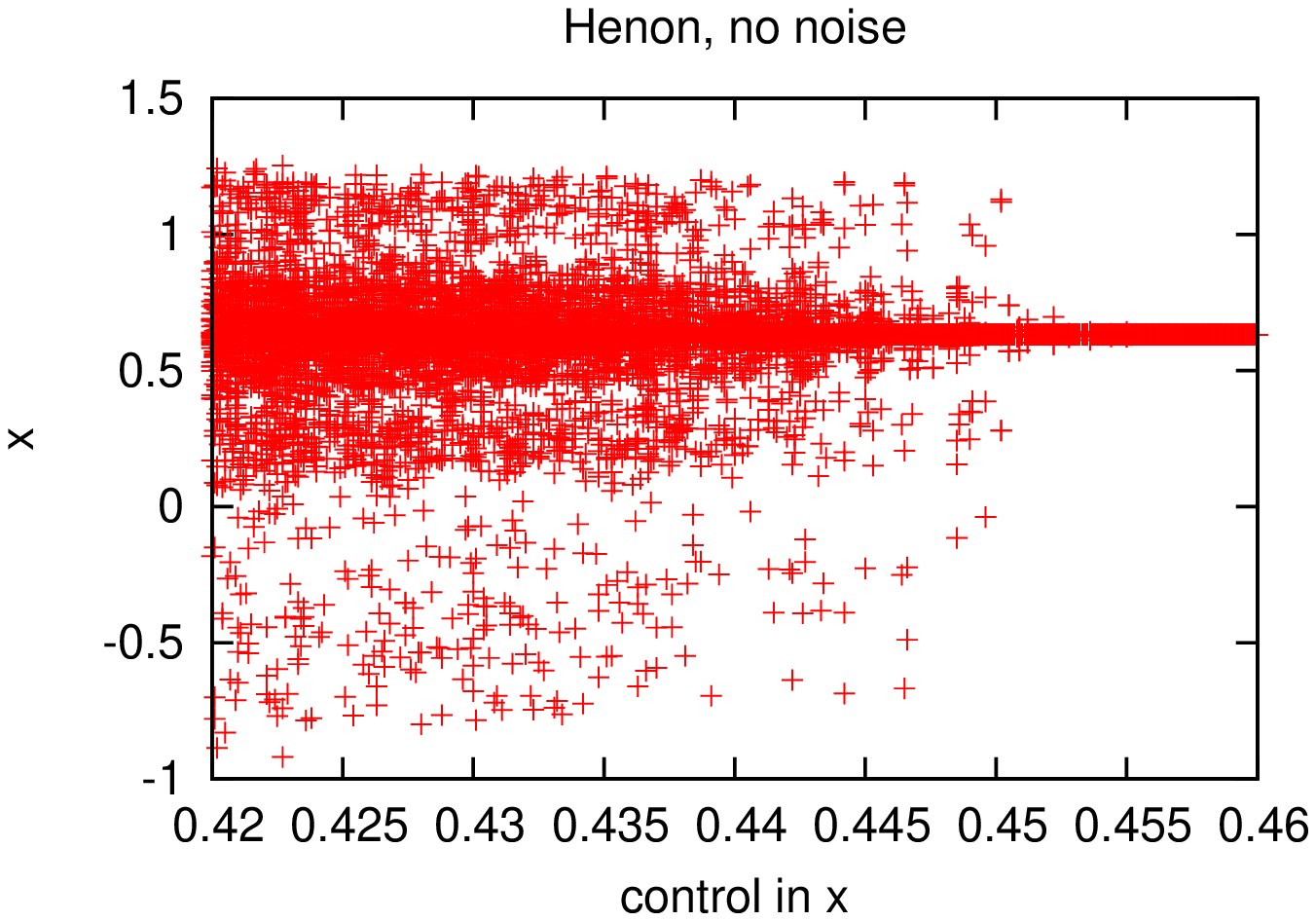}}}
\hspace{12pt}
~~~
\subfloat[]{%
\resizebox*{3.5cm}{!}{\includegraphics{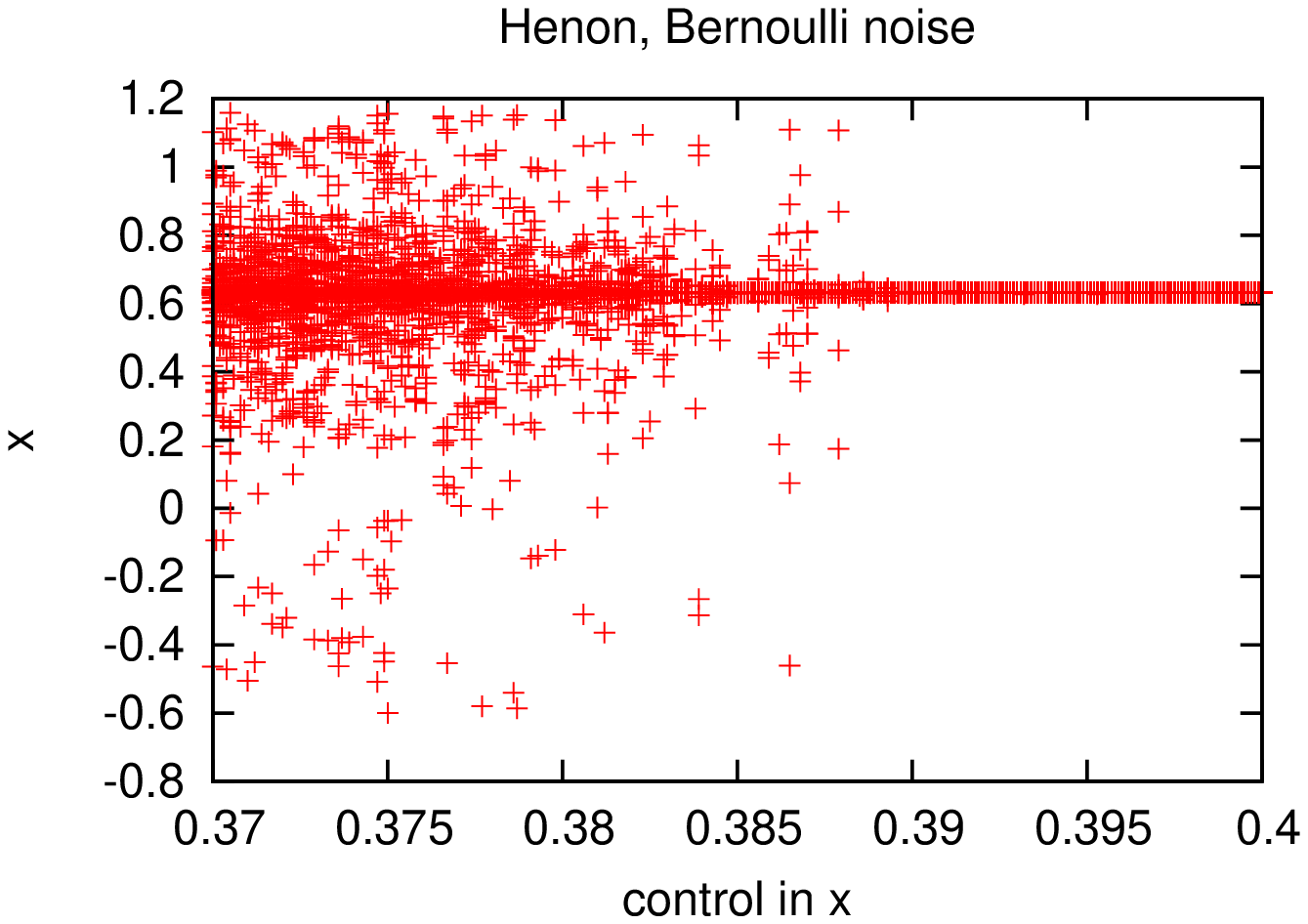}}}
\hspace{12pt}
~~~
\subfloat[]{
\resizebox*{3.5cm}{!}{\includegraphics{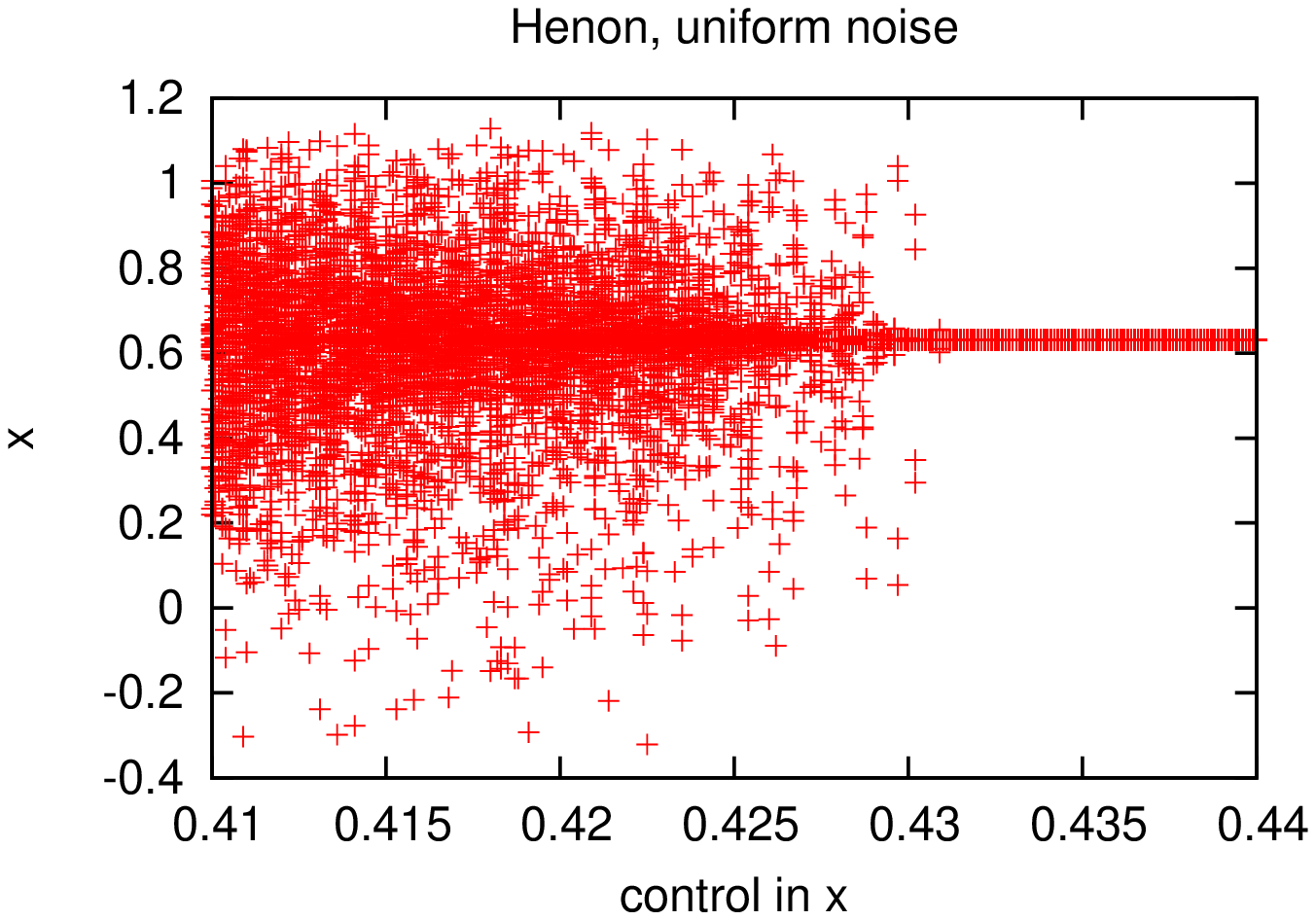}}}
\caption{Bifurcation diagram of the H\'{e}non map for $x$ with  no control in $y$ and (a) 
no noise $\ell_1=0$, (b) $\ell_1=0.2861$, Bernoulli distribution, (c)
$\ell_1=0.2861$, uniform on $[-1,1]$ distribution, $\alpha_2=0.8$. The range of initial values is $x_0 \in [0.1,0.8]$ and $y_0 \in [0.1,0.2]$.
\bigskip
}
\label{figure_ex_2_fig_1}
\end{figure}

The fact that convergence is observed for lower control values than theoretically predicted, is also illustrated in Fig.~\ref{figure_ex_2_fig_2} with limit two-dimensional sets for the H\'{e}non map, with $\ell_1=0.2861$, and the control in $y$ being $\alpha_2=0.9$,
starting with $\alpha_1=0.3$ and the Bernoulli noise (Fig.~\ref{figure_ex_2_fig_2} (a)), then for $\alpha_1=0.3$ and the uniform noise (Fig.~\ref{figure_ex_2_fig_2} (b)), next for $\alpha_1=0.36$ and the Bernoulli noise (Fig.~\ref{figure_ex_2_fig_2} (c))
and, finally, for $\alpha_1=0.36$ and the uniform noise (Fig.~\ref{figure_ex_2_fig_2} (d)).

\begin{figure}
\subfloat[]{%
\resizebox*{3cm}{!}{\includegraphics{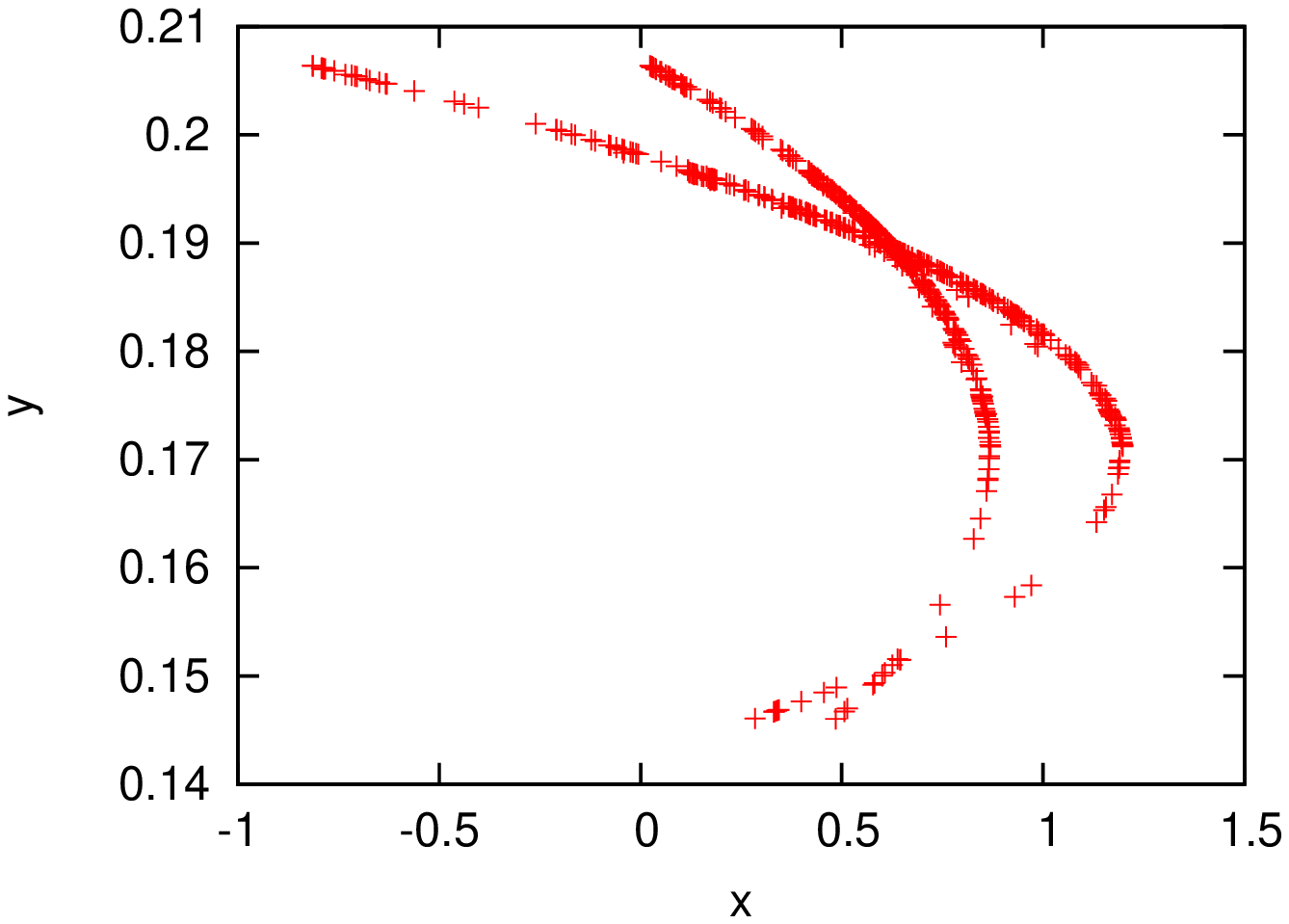}}}
\hspace{5pt}
~~~
\subfloat[]{%
\resizebox*{3cm}{!}{\includegraphics{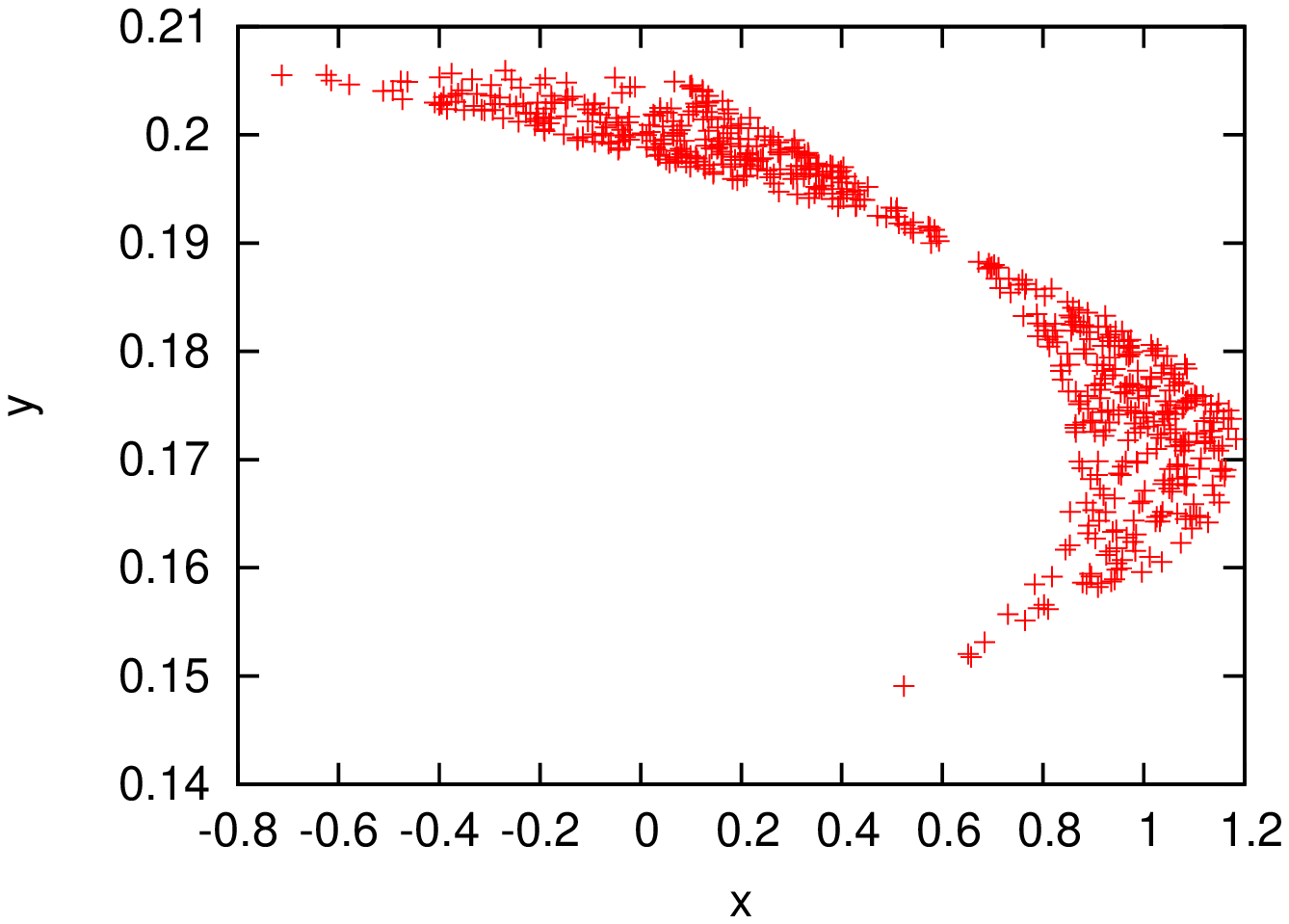}}}
~~~
\subfloat[]{
\resizebox*{3cm}{!}{\includegraphics{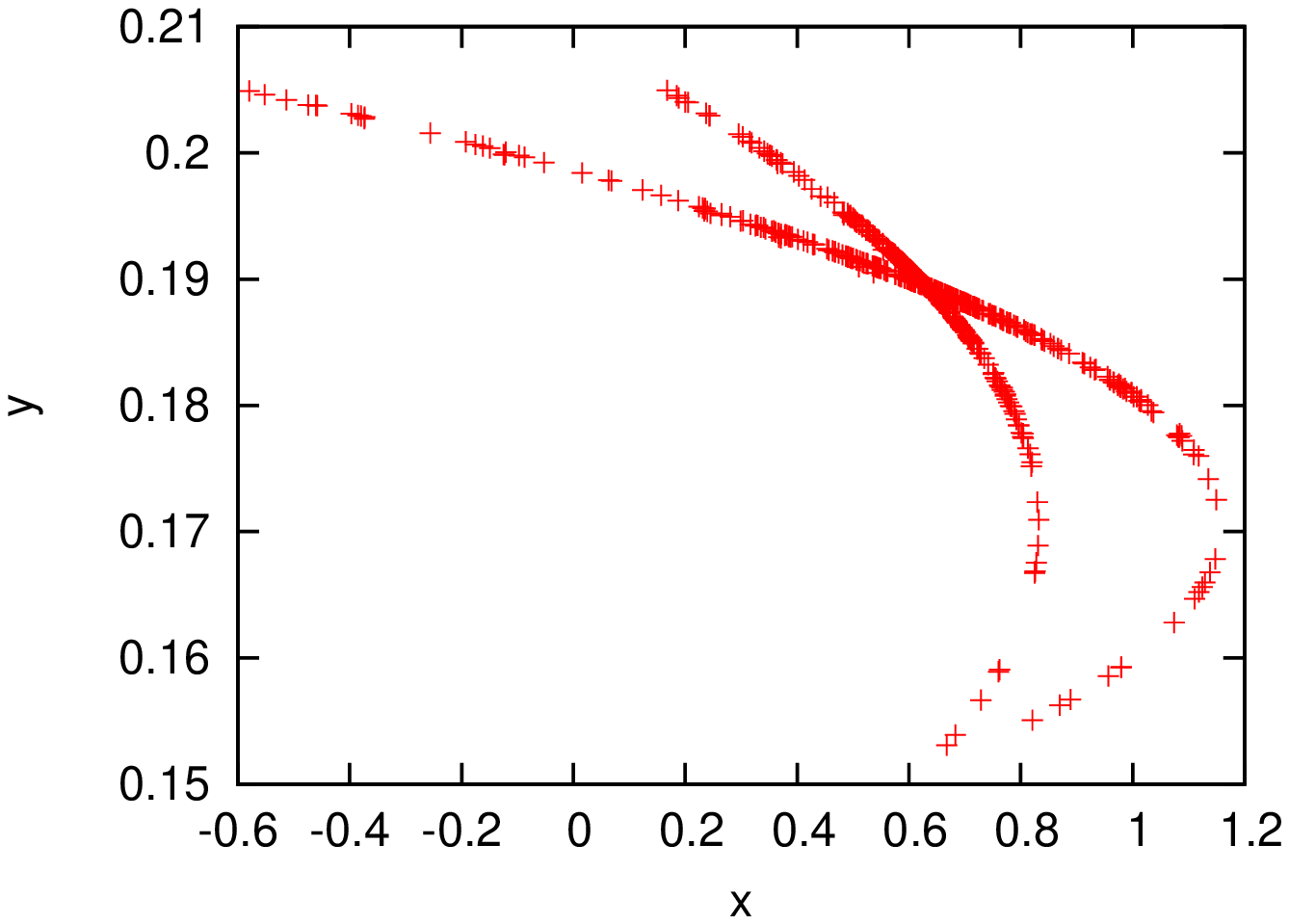}}}
\hspace{5pt}
~~~
\subfloat[]{%
\resizebox*{3cm}{!}{\includegraphics{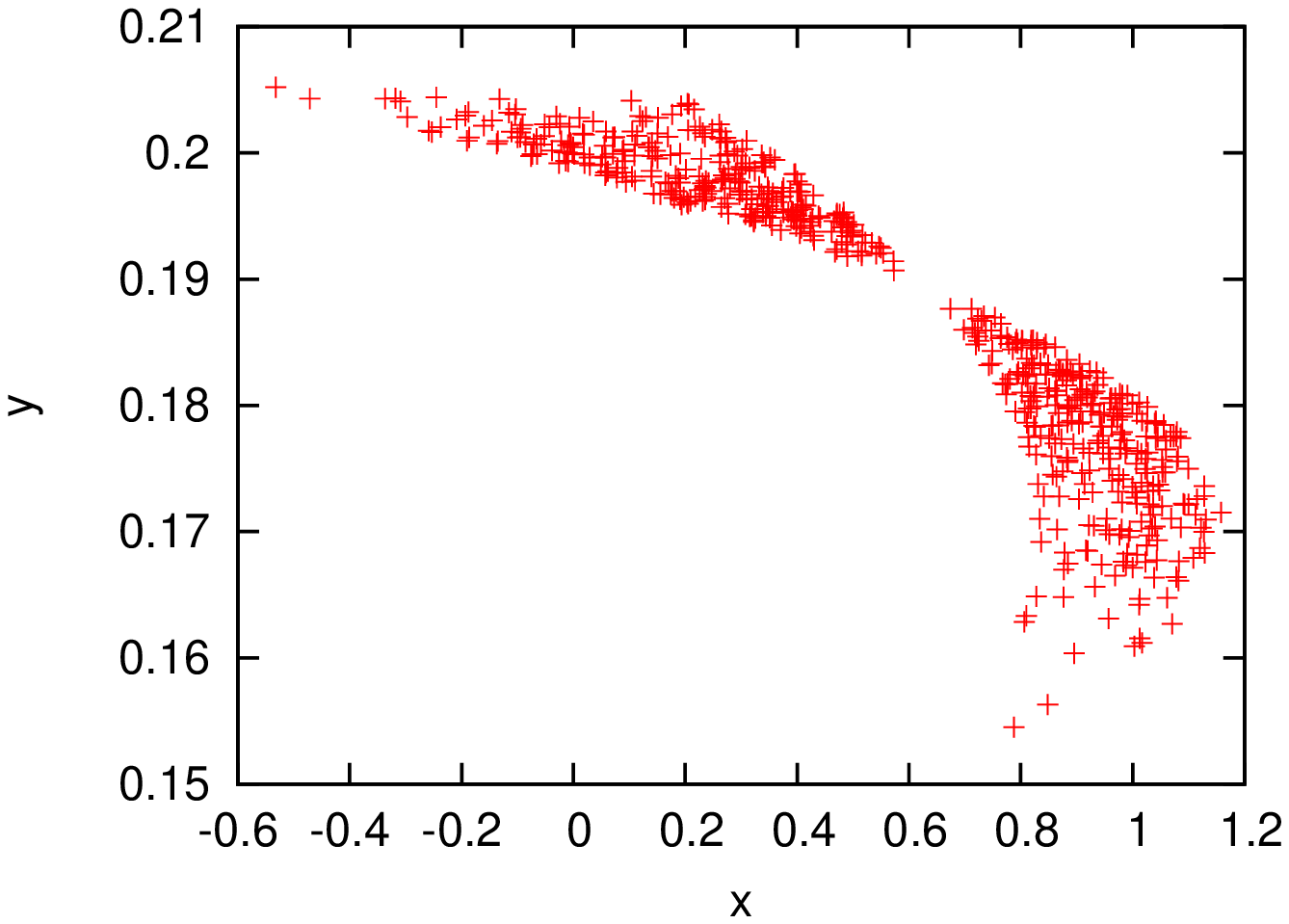}}}
\caption{The limit sets for the H\'{e}non map for  $\ell_1 = 0.2861$ and  $\alpha = 0.3$, (a) for 
Bernoulli, (b) for the uniform on $[-1,1]$ noise, then $\alpha = 0.36$, (c) for 
Bernoulli, (d) for the uniform on $[-1,1]$ noise. The control in
$y$ is 0.9, $x_0 = 0.3$, $y_0=0.1$ in all four simulations.
\bigskip
}
\label{figure_ex_2_fig_2}
\end{figure}

\item  
{\it Norm $\| \cdot\|_1$, $\chi_1$ is Bernoulli distributed, $\ell_2=0$, $a=2$ and $b=0.5$. }  In this part we take  different parameters $a$ and $b$,  which represent a chaotic case. In this case $x^* \approx 0.593$, $2ax^* \approx 2.372$ and 
\begin{equation*}
\begin{split}
\| \mathcal C_{n}(x)\|_\infty \le & \max\left\{\left(1- \alpha_1 - \ell_1 \chi_{1, n} \right)2ax^*+\left( 1-\alpha_2 - \ell_2 \chi_{2, n} \right)  b, \right. \\ 
& \left.  \left(1- \alpha_1 - \ell_1 \chi_{1, n} \right) \right\} \\
& \approx 2.372\left( 1-\alpha_1 - \ell_1 \chi_{1, n} \right)+0.5\left( 1-\alpha_2 - \ell_2 \chi_{2, n} \right)=:\nu(n).
\end{split}
\end{equation*}
This leads to the estimate $\ell^2_1>\left(( 1-\alpha_1)+0.2107( 1-\alpha_2)\right)^2-0.1777$, which, along with  \eqref{def:d}, holds
for $\alpha_1=0.45$, $\alpha_2=0.8$ and 
$\ell_1=0.416$.

Note that in the deterministic case, when $\ell_1=\ell_2=0$,  we get the best value (using the estimation of eigenvalues) as $\alpha_1>0.6518$ for $\alpha_2=0$ and $\alpha_1>0.5801$ for $\alpha_2=0.8$.
\end{enumerate}
%

\subsubsection{\bf Lozi}
\label{subsec:stochLozi}

\begin{enumerate}
\item{\it Norm $\|\cdot\|_\infty$, $\chi_1$ is Bernoulli distributed.} Reasoning as for the H\'{e}non map we get
\begin{equation*}
\begin{split}
\| \mathcal C_{n}(x)\|_\infty \le & \max\{2.4\left( 1-\alpha_1 -\ell_1 \chi_{1, n} \right),  0.3\left( 1-\alpha_2 - \ell_2 \chi_{2, n} \right) \} \\ = & 2.4\left( 1-\alpha_1 -\ell_1 \chi_{1, n} \right)=:\nu(n)
\end{split}
\end{equation*}
if
\begin{equation}
\label{ineq:a122}
  1-\alpha_2 + \ell_2\le 8\left(1- \alpha_1 - \ell_1 \right).
 \end{equation}
Then $\mathbb E \ln \nu(n)=\ln  2.4+0.5\ln \left( (1-\alpha_1)^2- \ell_1^2\right)< 0$ leads to the estimate $\ell_1^2>(1-\alpha_1)^2-0.1736$. 
The values $\alpha_1=0.414$, $ \ell_1=0.17$ satisfy \eqref{def:d}  as well as  \eqref{ineq:a122} (for arbitrary $\alpha_2$, $\ell_2$).

Simulations illustrate convergence for smaller values of  $\alpha_1$ and $\ell_1$ for 
$\alpha_2=\ell_2=0$.
In Fig.~\ref{figure_ex_1_fig_5} (d), for $\alpha_1 = 0.4<0.413$ and $\ell_1=0.15 < 0.17$, there is convergence to the equilibrium; however, such convergence is achieved for any $\ell_1 \geq 0.15$. According to Fig.~\ref{figure_ex_1_fig_5} (a), for $\ell_1=0$ there is a stable two-cycle, which becomes blurred and approaches the equilibrium for $\ell_1=0.05$ (Fig.~\ref{figure_ex_1_fig_5} (b)), leading to a blurred equilibrium at $\ell_1=0.1$ (Fig.~\ref{figure_ex_1_fig_5} (c)) and a stable equilibrium for $\ell_1=0.15$ (Fig.~\ref{figure_ex_1_fig_5} (d)). 
In Fig.~\ref{figure_ex_1_fig_5}, we took the initial point $(-10,-15)$ quite far from the equilibrium;  the same stabilization pattern is observed for any initial point. 
The only difference is that for very large by absolute value $x_0 \approx -100$, 
up to 70 first iterations can be outside of the interval $[-1;1.5]$ chosen to illustrate convergence.

\begin{figure}
\subfloat[]{%
\resizebox*{3cm}{!}{\includegraphics{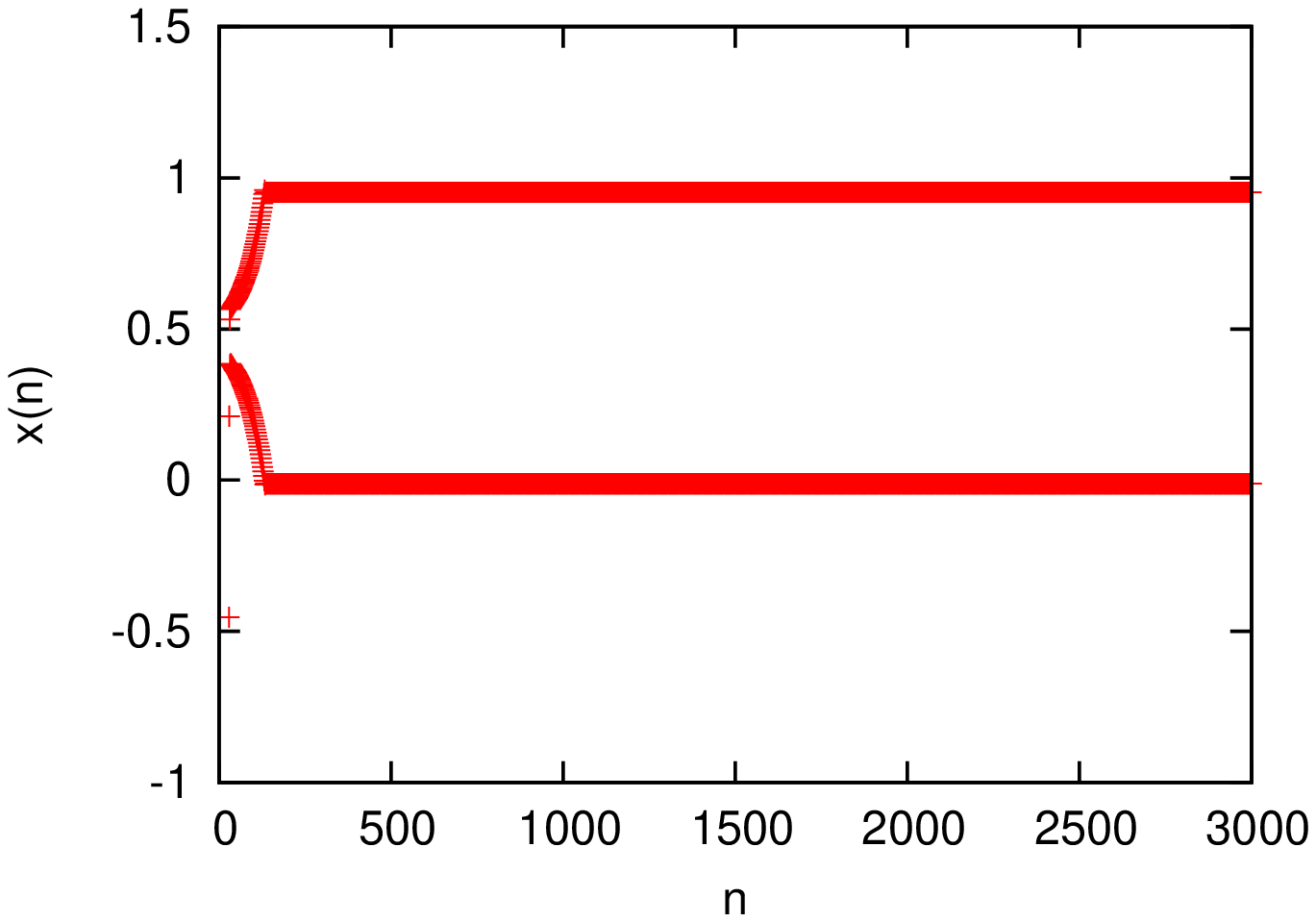}}}
\hspace{5pt}
~~~
\subfloat[]{%
\resizebox*{3cm}{!}{\includegraphics{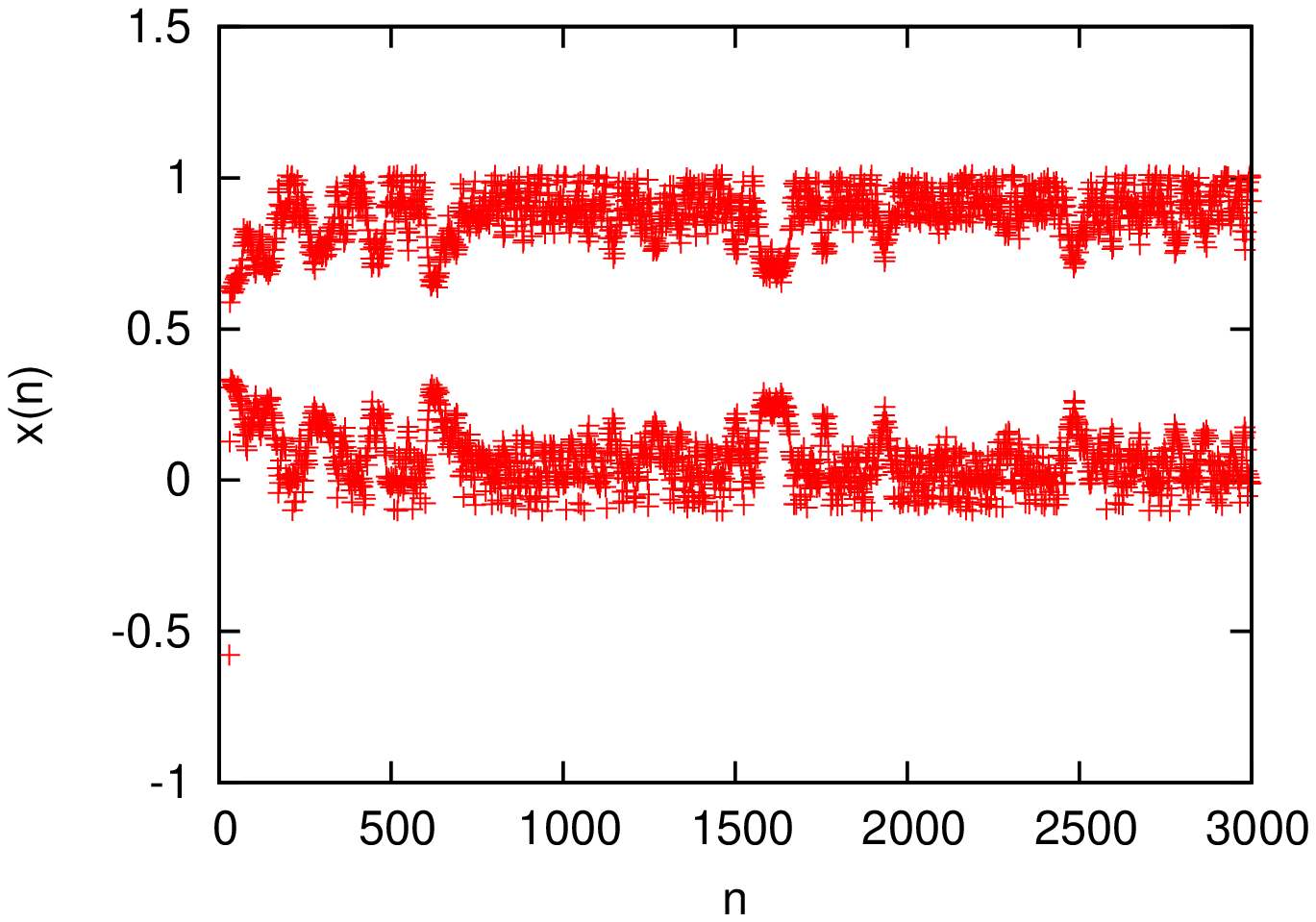}}}
~~~
\subfloat[]{
\resizebox*{3cm}{!}{\includegraphics{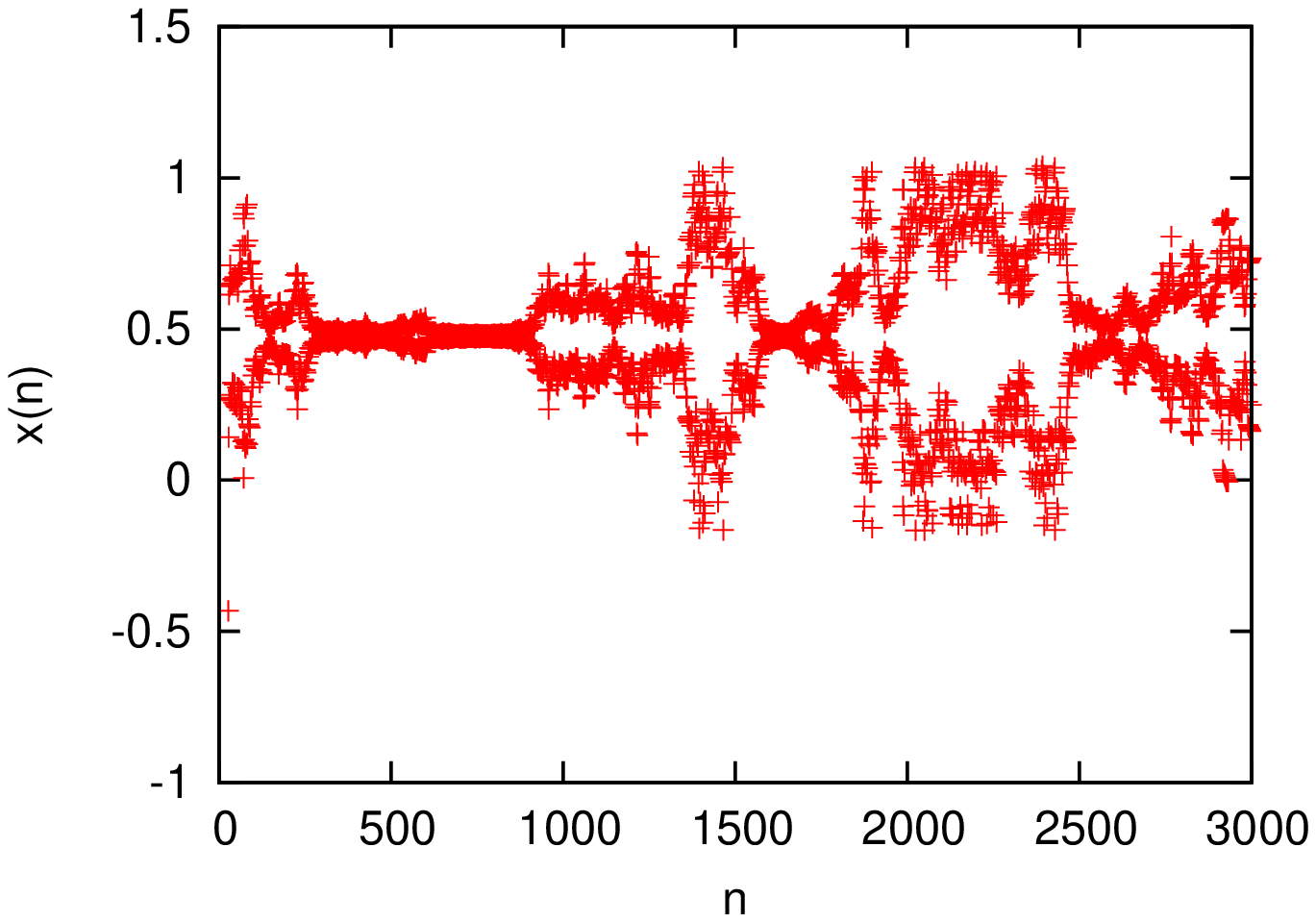}}}
\hspace{5pt}
~~~
\subfloat[]{%
\resizebox*{3cm}{!}{\includegraphics{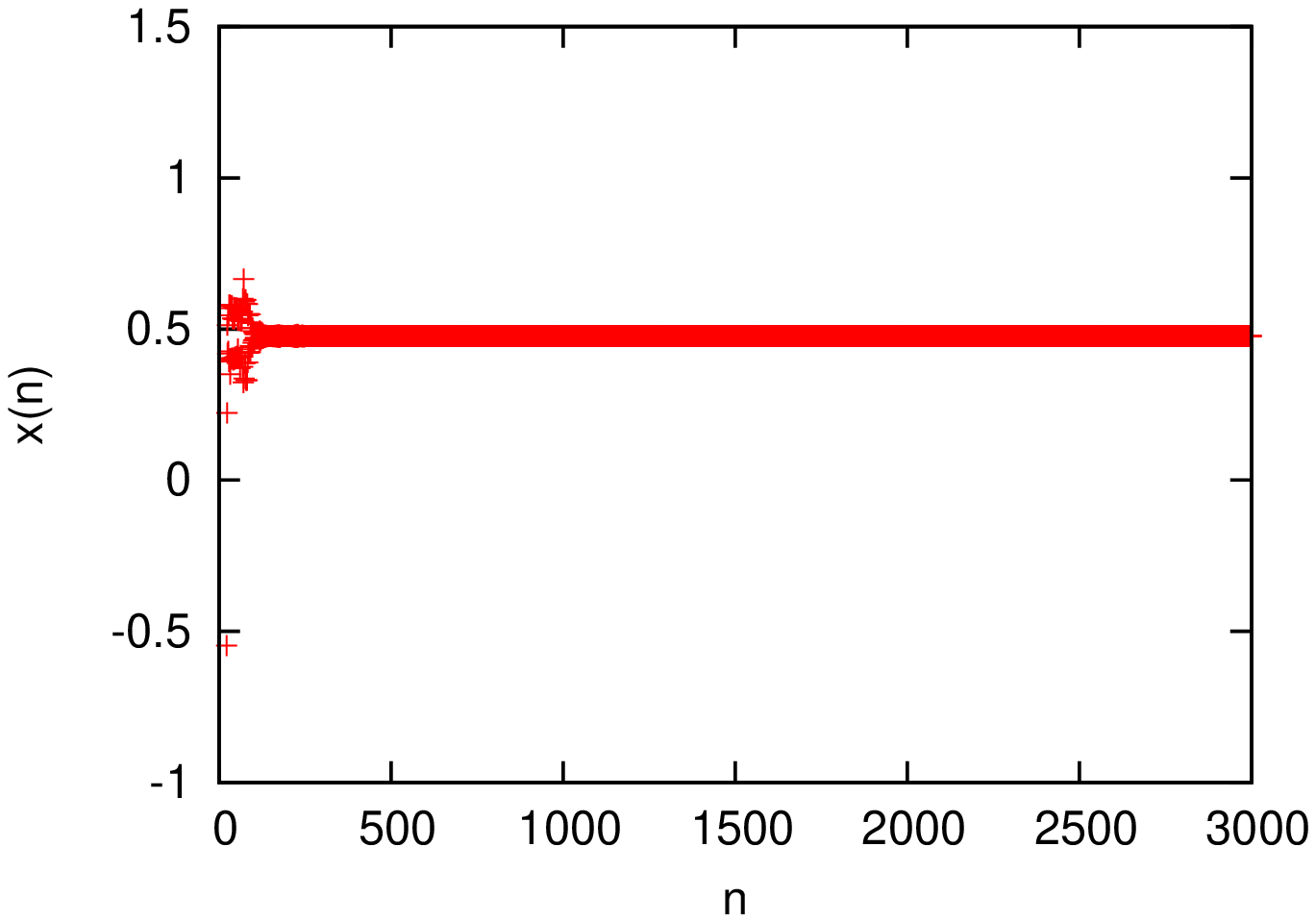}}}
\caption{Runs of the Lozi map for $x$-coordinate only, $\alpha = 0.4$, no control in
$y$, $x_0=-10$, $y_0=-15$ and (a) $\ell = 0$, (b) $\ell = 0.05$, 
(c) $\ell = 0.1$, (d) $\ell = 0.15$ for the Bernoulli noise.
\bigskip
}
\label{figure_ex_1_fig_5}
\end{figure}

In addition to separate runs in $x$, asymptotic behavior is illustrated by two-dimensional diagrams of the limit sets in Figure~\ref{figure_ex_1_fig_6}.
This set forms a blurred two-cycle for  $\ell_1=0.03$ (Fig.~\ref{figure_ex_1_fig_6} (a)) which expands and approached the equilibrium point for $\ell_1=0.08$ (Fig.~\ref{figure_ex_1_fig_6} (b)), becomes a blurred equilibrium for $\ell_1=0.135$ (Fig.~\ref{figure_ex_1_fig_6} (c)) and a stable equilibrium for $\ell_1=0.145$ (Fig.~\ref{figure_ex_1_fig_6} (d)) or larger control values.

\begin{figure}
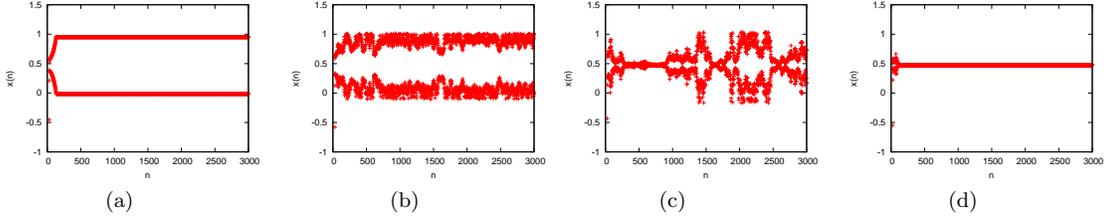

\subfloat[]{%
\resizebox*{3cm}{!}{\includegraphics{lo_x_c_0_4_no_y_c_no_l_x0_m10_y_0_m15.eps}}}
\hspace{5pt}
~~~
\subfloat[]{%
\resizebox*{3cm}{!}{\includegraphics{lo_x_c_0_4_no_y_c_l_0_05_x0_m10_y_0_m15.eps}}}
~~~
\subfloat[]{
\resizebox*{3cm}{!}{\includegraphics{lo_x_c_0_4_no_y_c_l_0_1_x0_m10_y_0_m15.eps}}}
\hspace{5pt}
~~~
\subfloat[]{%
\resizebox*{3cm}{!}{\includegraphics{lo_x_c_0_4_no_y_c_l_0_15_x0_m10_y_0_m15.eps}}}
\caption{The limit set for the Lozi map for  $\alpha = 0.4$, no control in
$y$, $x_0 = -10$, $y_0=-15$ and (a) $\ell = 0.03$, (b) $\ell = 0.08$,
(c) $\ell = 0.135$, (d) $\ell = 0.145$ for the Bernoulli 
noise.
\bigskip
}
\label{figure_ex_1_fig_6}
\end{figure}

\item{\it Norm $\| \cdot\|_1$, $\chi_1$ is Bernoulli distributed, $\ell_2=0$. }
Now, 
\begin{equation*}
\begin{split}
\| \mathcal C_{n}(x)\|_1&\le 1.4\left( 1-\alpha_1 - \ell_1 \chi_{1, n} \right)+0.3\left( 1-\alpha_2 - \ell_2 \chi_{2, n} \right)=:\nu(n),\\
\mathbb E \ln \nu(n)&=0.5\ln \left[ \left(1.4( 1-\alpha_1)+0.3( 1-\alpha_2)\right)^2-\left(1.4\ell_1\right)^2\right].
\end{split}
\end{equation*}
We can show that  each set of parameters $\alpha_1=0.3$, $\alpha_2=0.8$, $\ell_1=0.2039$ and $\alpha_1=0.27$,  $\alpha_2=0.9$,  $\ell_1=0.2332$, satisfies all necessary conditions and leads to $\mathbb E \ln \nu(n)<0$.

\item{\it Norm $\| \cdot \|_1$, $\chi_1$ and $\chi_2$ are  Bernoulli distributed.}
We have 
\begin{equation*}
\begin{split}
\mathbb E \ln \nu(n)&=0.25\biggl(\ln \left[ \left(1.4( 1-\alpha_1)+0.3( 1-\alpha_2)\right)^2-\left(1.4\ell_1+0.3\ell_2\right)^2\right]\\&+\ln \left[ \left(1.4( 1-\alpha_1)+0.3( 1-\alpha_2)\right)^2-\left(1.4\ell_1-0.3\ell_2\right)^2\right]\biggr).
\end{split}
\end{equation*}
By introduction of the second nonzero noise, we can decrease a little bit the intensity $\ell_1$ of the first noise for  $\alpha_1=0.27$, $\alpha_2=0.9$. In particular, we can take $\ell_1=0.2$, $\ell_1=0.55$ and get $\mathbb E \ln \nu(n)\approx 0.25\ln 0.9936 \approx -0.25 \cdot 0.0064<0$.

Fig.~\ref{figure_ex_3_fig_1} compares the influence of Bernoulli and uniformly distributed on $[-1,1]$ noises
for the Lozi map, illustrating a part of the bifurcation diagram for $x$ in the range of parameters of $\alpha$ close to the last bifurcation leading to stabilization of the equilibrium, with the control $\alpha_2=0.9$ in $y$. In Fig.~\ref{figure_ex_3_fig_1} (a), for $\ell_1=0$, we see a period-halving bifurcation; in (b) and (c) (Bernoulli and uniform noise, respectively, both with the amplitude $\ell_1=0.2$), the value of $\alpha$ for this bifurcation is smaller, with some stabilization advantage of the Bernoulli distribution. Fig.~\ref{figure_ex_3_fig_1} (c) also shows that the image for the uniformly distributed perturbation looks `noisier' (compared to (b)).

\begin{figure}
\subfloat[]{%
\resizebox*{3.5cm}{!}{\includegraphics{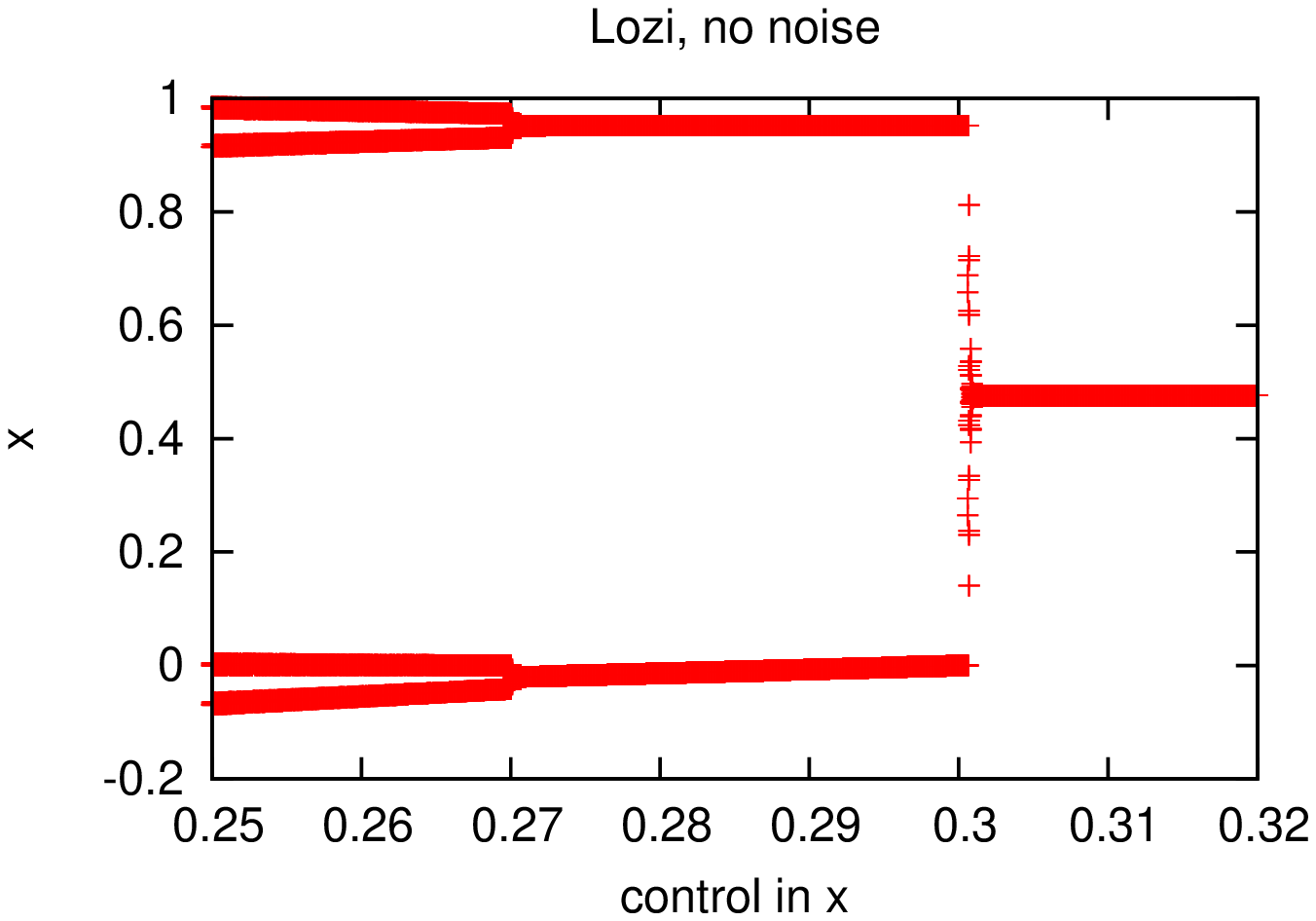}}}
\hspace{12pt}
~~~
\subfloat[]{%
\resizebox*{3.5cm}{!}{\includegraphics{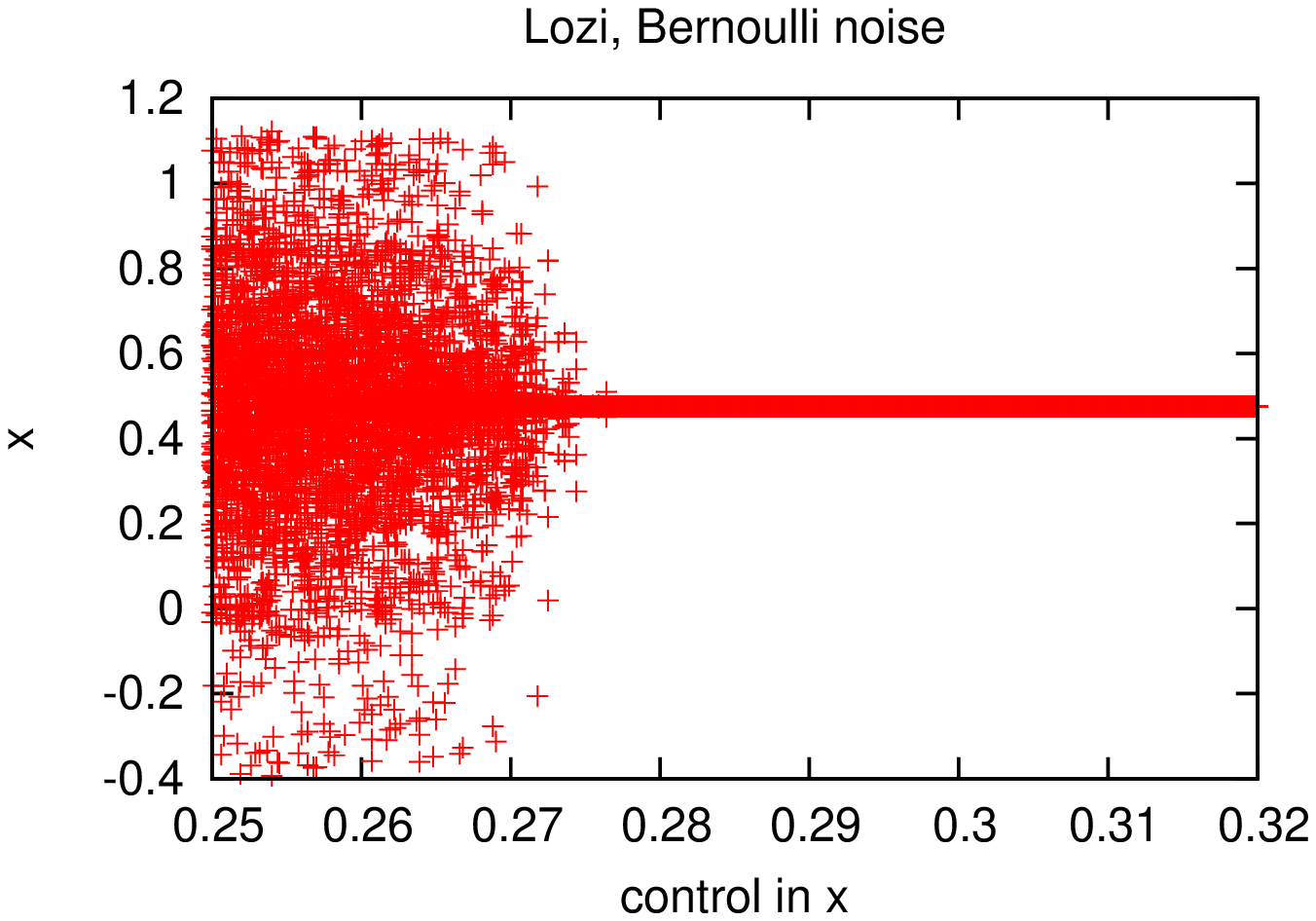}}}
\hspace{12pt}
~~~
\subfloat[]{
\resizebox*{3.5cm}{!}{\includegraphics{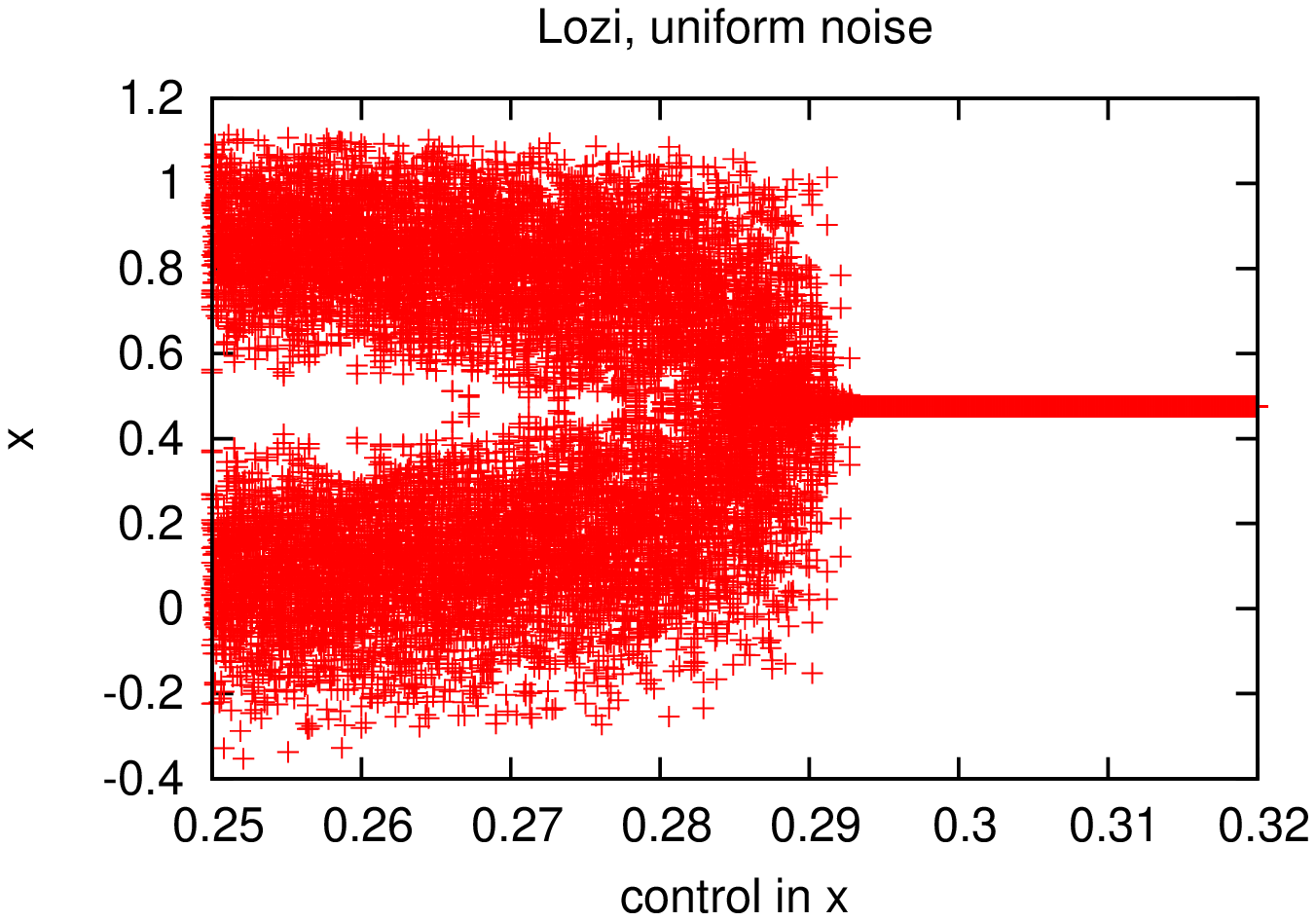}}}
\caption{Bifurcation diagram of the Lozi map for $x$ with  0.9-control in $y$ and (a)
no noise $\ell_1=0$, (b) $\ell_1=0.2$, Bernoulli distribution, (c)
$\ell_1=0.2$, uniform on $[-1,1]$ noise distribution. 
The range of initial values is $x_0 \in [0.1,0.8]$ and $y_0 \in [0.1,0.2]$.
\bigskip
}
\label{figure_ex_3_fig_1}
\end{figure}

Finally, we illustrate the role of noise $\ell_2$ in stabilization. 
If $\alpha_1 = 0.27$, $\alpha_2 = 0.9$, without noise, stabilization is not observed (Fig.~\ref{figure_ex_1_fig_7} (a)). 
Even with $\ell_1=0.4$ and $\ell_1=0.5$, there is no stabilization, see Fig.~\ref{figure_ex_1_fig_7} (b) and (c). 
However, if $\ell_2$ increases to 0.55, stabilization is observed, 
as theoretically predicted, see Fig.~\ref{figure_ex_1_fig_7} (d).

\begin{figure}
\subfloat[]{%
\resizebox*{3cm}{!}{\includegraphics{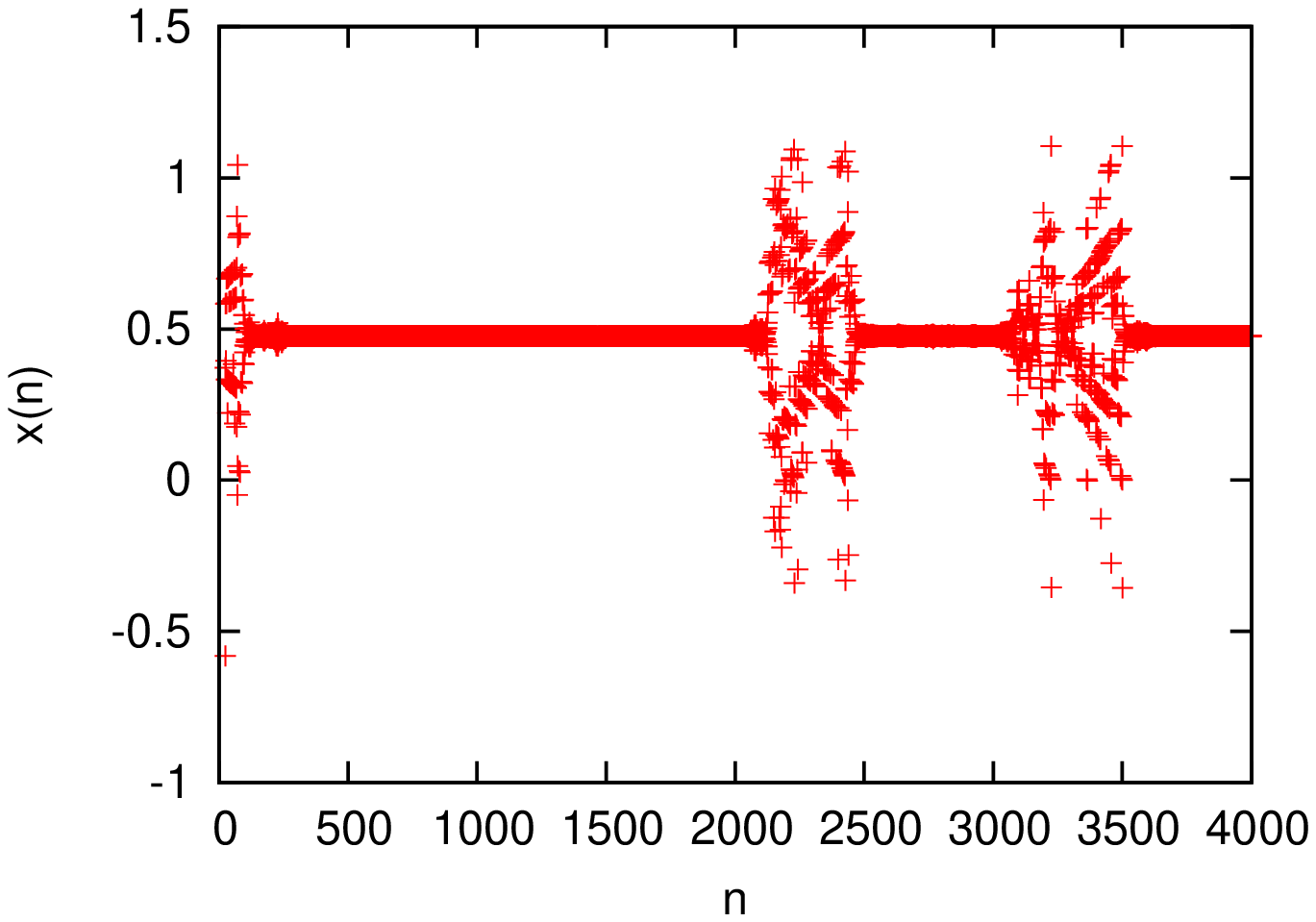}}}
\hspace{5pt}
~~~
\subfloat[]{%
\resizebox*{3cm}{!}{\includegraphics{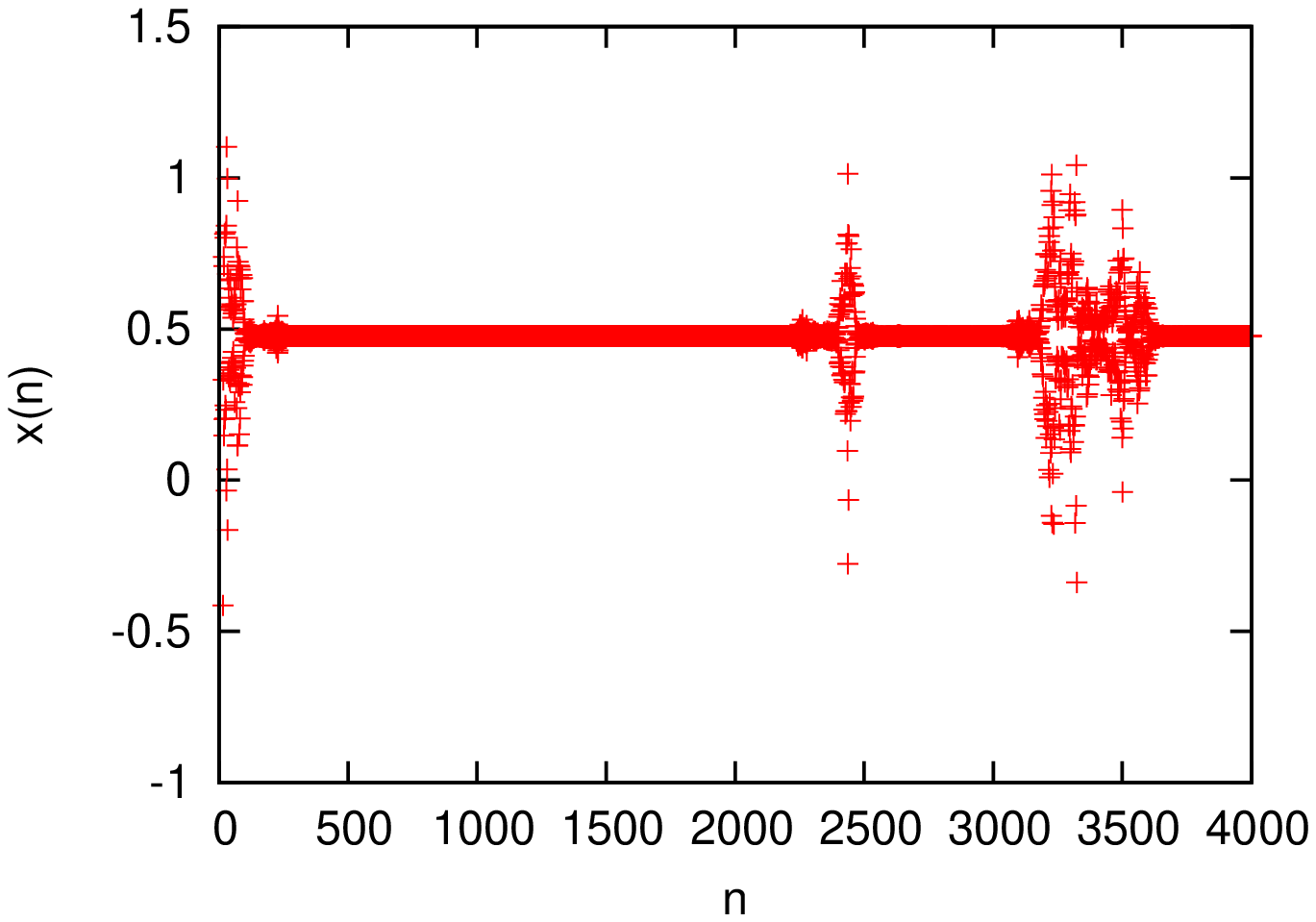}}}
~~~
\subfloat[]{
\resizebox*{3cm}{!}{\includegraphics{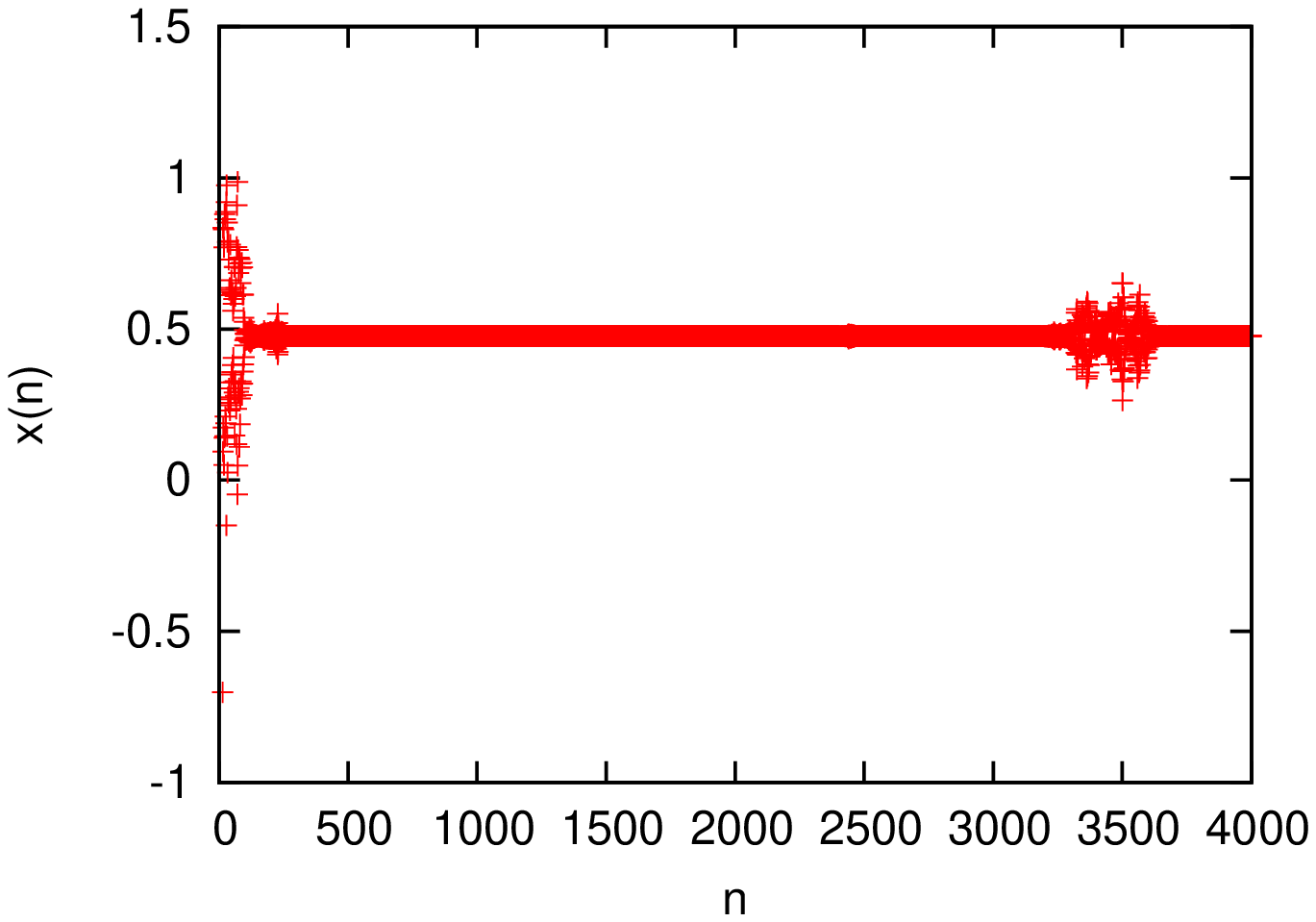}}}
\hspace{5pt}
~~~
\subfloat[]{%
\resizebox*{3cm}{!}{\includegraphics{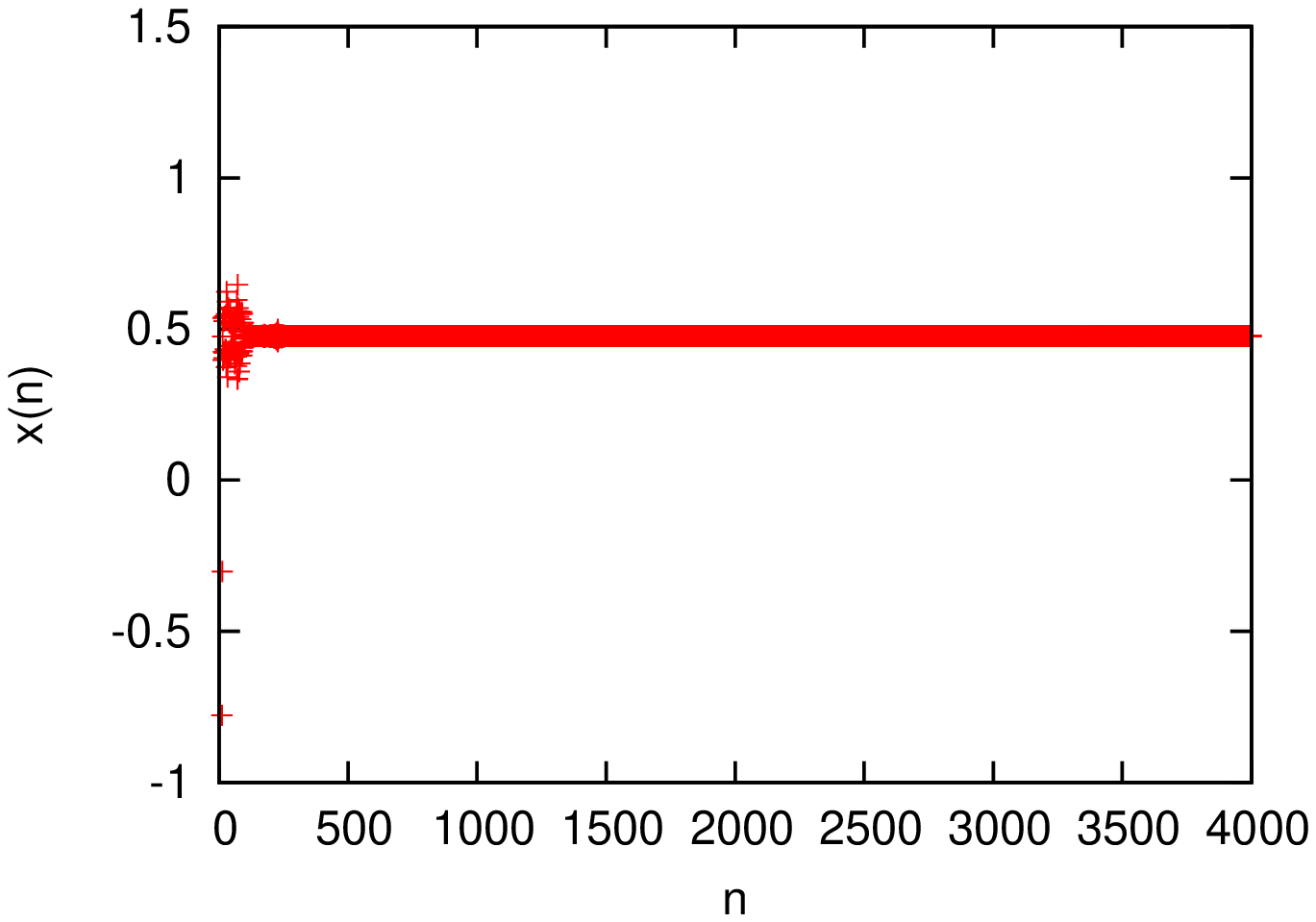}}}
\caption{Runs of the Lozi map for $x$-coordinate only, $\alpha_1 = 0.27$, $\ell_1=0.2$, the control in
$y$ is $\alpha_2 = 0.9$, $x_0=-10$, $y_0=-15$ and 
(a) $\ell_2 = 0$, (b) $\ell_2 = 0.4$, (c) $\ell_2 = 0.5$, (d) $\ell_2 = 0.55$ for the Bernoulli
noise for both $\ell_i$.
\bigskip
}
\label{figure_ex_1_fig_7}
\end{figure}

\end{enumerate}

\section{Conclusions and Discussion}
\label{sec:conclusions}

In the present paper, we explored both deterministic and stochastic stabilization 
of H\'{e}non and Lozi maps with TOC, where an equilibrium is chosen as a target.
\begin{enumerate}
\item
We proved that TOC allows to stabilize a chosen equilibrium in a ball centered at it, once controls are close enough to one. For an autonomous model,
simulations illustrate, however, that local stabilization parameters work in a reasonably large neighborhood. This brings us to the problem whether there is an equivalence of local and global stability observed in certain one-dimensional cases
of TOC \cite{TPC}. However, for both H\'{e}non and Lozi maps it is still an open question, even if we reduce the domain to the same half-plane where the equilibrium lies, and consider autonomous cases only.
\item
Investigation of non-autonomous models makes it possible to add a noise in certain bounds, still keeping stabilization. Moreover, we demonstrate theoretically and with simulations that stochastic stabilization may be achieved when there is, e.g. a stable cycle rather than a stable equilibrium for the average controls values.
\end{enumerate}

Let us compare our results to other types of control. 

Prediction-Based Control (PBC) keeps all the original equilibrium points.
The general form of PBC applied to a scalar map $x_{n+1}=f(x_n)$ is \cite{uy99}
$$x_{n+1}=f(x_n)-\alpha  \left( f^k(x_n)-x_n \right), \quad x_0>0, \quad n\in {\mathbb N}_0,
$$
which can be treated as a discrete analogue of delayed control for continuous models \cite{Pyragas1992}.
Here the control $\alpha \in (0,1)$, while $f^k$ is the $k$th iteration of $f$. The application and the analysis become simpler for $k=1$
\cite{FL2010}
\begin{equation*}
x_{n+1}=f(x_n)- \alpha (f(x_n)-x_n)= (1-\alpha)f(x_n)+ \alpha x_n, \quad x_0>0, \quad n\in {\mathbb N}_0.
\end{equation*}
However, as mentioned in \cite{LP2014}, immediate generalization of this scheme to the vector case
$$
X_{n+1} = F(X_n) - \alpha(F(X_n)-X_n)
$$
does not stabilize the H\'{e}non map for any $\alpha \in (0,1)$. A modification of PBC proposed in \cite{Polyak} was applied to get local stabilization of both equilibrium points for the H\'{e}non map in \cite{LP2014}.

Compared to these methods, the advantage of VMTOC is robustness allowing stabilization in a prescribed domain, not just a small neighbourhood of the equilibrium point. Introduction of several parameters allows more adaptivity, controlling, for example, one of two variables only. Also, \cite{LP2014,Polyak} considered constant control only. Introduction of step-dependent and stochastic control allowed us to evaluate positive effect of noise on stabilization. On the other hand, PBC-type design allowed to get lower control values (`weak control' in  \cite{LP2014}) for local stabilization and could be applied at selected steps only. Note that PBC in principle can provide local stabilization only, as it stabilizes all chosen equilibrium points with the same control equation. In addition, \cite{LP2014,Polyak} considered cycles stabilization, which is not in the framework of the present research.

While it is generally hard to compare continuous and discrete controls, some ideas are similar. 
For example, in \cite{Gjur1}, as in the present paper, the control is a function of some difference, which becomes small as the solution approaches the target orbit. On the other hand, in \cite{Gjur1}, to some extent similarly 
to \cite{LP2014,Polyak},
this is the difference between the current state and some other state, past in \cite{Gjur1} and future, or predicted, in 
\cite{LP2014,Polyak}. The other common feature of \cite{Gjur1} and our research is that the distributed nature of the control 
can contribute to stabilization. However, in each realization, control distribution in \cite{Gjur1} is fixed and, for a given map and prescribed initial values, the results are deterministic. In our models, each realization of the stochastic control, generally, leads to different results. This justifies the necessity to apply Kolmogorov's Law of Large Numbers to verify stabilization.

There are some other control types, which cannot be directly compared to the results of the paper as they stabilize not the equilibria of the original map but some other points.
For example, we considered TOC with an equilibrium as a target, but it could be interesting to include other possible targets, for example,
the zero target; in this case, we study Proportional Feedback control \cite{Carmona,gm,Liz2010}
\begin{equation*}
x_{n+1}=f( c x_n ), \quad c\in [0,1).
\end{equation*}

It is also natural to consider its `external' modification
\begin{equation*}
x_{n+1}=cf(x_n ), \quad  c\in [0,1).
\end{equation*}
We recall that, once we choose an equilibrium point as the target $T$ in \eqref{TOC_eq},
this is also a fixed point of \eqref{TOC_eq}. However, PF control shifts equilibrium points (as well as TOC with a target not at an equilibrium point). Moreover, for a scalar PF control existence of a positive equilibrium is not guaranteed, for an unstable equilibrium 
there is usually a range of parameters $(c_*,c^*)$ such that a new positive equilibrium exists and is stable for $0< c_* < c < c^* < 1$ only.

\section*{Acknowledgment}

The authors are very grateful to anonymous reviewers whose valuable comments significantly contributed to the presentation of our results and the current form of the manuscript.
The first author was  supported by NSERC grant RGPIN-2020-03934.


\end{document}